\def\th@plain{%
  \thm@notefont{}
  \itshape 
}
\def\th@definition{%
  \thm@notefont{}
  \normalfont 
}
\newenvironment{subtheorem}[1]{%
  \def\subtheoremcounter{#1}%
  \refstepcounter{#1}%
  \protected@edef\theparentnumber{\csname the#1\endcsname}%
  \setcounter{parentnumber}{\value{#1}}%
  \setcounter{#1}{0}%
  \expandafter\def\csname the#1\endcsname{\theparentnumber.\alph{#1}}%
  \ignorespaces
}{%
  \setcounter{\subtheoremcounter}{\value{parentnumber}}%
  \ignorespacesafterend
}
\newcounter{parentnumber}
\theoremstyle{plain}
\newtheorem{theorem}{Theorem}
\newtheorem{lemma}{Lemma}
\newtheorem{proposition}{Proposition}
\theoremstyle{definition}
\newtheorem{remark}{Remark}
\newcounter{mnotecount}[section]
\numberwithin{equation}{section}
\begin{document}

\title{The semiclassical limit from the Pauli-Poisswell/Darwin to the Euler-Poisswell/Darwin system by WKB methods}
\author[a,b]{Norbert J. Mauser}
\author[a,b,d]{Jakob Möller}
\author[c]{Changhe Yang}
\affil[a]{Research Platform MMM "Mathematics-Magnetism-Materials" c/o Fak. Math., Univ. Wien, Oskar-Morgenstern-Platz 1, 1090 Vienna, Austria}
\affil[b]{Wolfgang Pauli Institut, Vienna, Austria}
\affil[c]{California Institute of Technology, 91125 Pasadena, CA, USA}
\affil[d]{CMLS, École Polytechnique, F-91128 Palaiseau}
\maketitle

\begin{abstract}
The self-consistent Pauli-Poisswell and Pauli-Darwin equations for 2-spinors are $O(1/c)$ (where $c$ denotes the speed of light) semi-relativistic approximations of the Dirac-Maxwell equation for 4-spinors coupled to the self-consistent electromagnetic fields generated by the charge and current densities of a fast moving electric charge.
They consist of a vector-valued magnetic Schrödinger equation with the Stern-Gerlach term which couples spin and magnetic field, coupled to 1+3 Poisson equations as the magnetostatic approximation of Maxwell's equations.

The Pauli-Poisswell and Pauli-Dariwn euqations are $O(1/c)$ models keeping both relativistic effects magnetism and spin, both of which are absent in the non-relativistic Schrödinger-Poisson equation and inconsistent in the magnetic Schrödinger-Maxwell equation.

We prove the local in time semiclassical limit $\hbar \rightarrow 0$ to the Euler-Poisswell equation and Euler-Darwin equations based on WKB analysis
and energy estimates. Moreover we obtain weak convergence of the monokinetic Wigner transform to the monokinetic scalar Wigner measure solving the Vlasov-Poisswell and Vlasov-Darwin equations and strong convergence of the macroscopic densities. We introduce the Euler-Poisswell/Darwin equation and prove local wellposedness and a blow up alternative. \\

\noindent \textbf{Key words:} Pauli equation; Vlasov-Darwin equation; Vlasov-Poisswell equation; Euler-Darwin equation; Euler-Poisswell equation; WKB approximation; Semiclassical limit;  \\
\textbf{MSC: 81Q20; 35Q40; 35Q35}
\end{abstract}

\setcounter{tocdepth}{1}
\tableofcontents

\section{Introduction}

We study the semiclassical limit of vanishing Planck constant $\hbar$ of two nonlinear systems of PDE consisting of the Pauli equation, introduced by Wolfgang Pauli in \cite{pauli1927quantenmechanik}, for the unknown 2-spinor $\psi^{\hbar,c}$ with values in $\mathbb{C}^2$, depending on two parameters $\hbar$ (Planck constant) and $c$ (speed of light), coupled to semi-relativistic approximations of the Maxwell equations for the unknown scalar electric potential $V^{\hbar,c}$ (taking values in $\mathbb{R}$) and the vector-valued magnetic potential $A^{\hbar,c}$ (taking values in $ \mathbb{R}^3$), where the source terms are the scalar charge density $\rho^{\hbar,c}:=|u^{\hbar,c}|^2$ and the vector-valued current density $J^{\hbar,c}$.

The \textbf{Pauli-Poisswell equation} in Lorenz\footnote{named after Ludvig Lorenz (1829-1891) and often falsely attributed to Hendrik Antoon Lorentz (1853-1928), after whom the Lorentz transformation and the Lorentz force are named.} gauge \eqref{eq:lorenz gauge} is given by
\begin{align}
    i\hbar\partial_t \psi^{\hbar,c} &= -\frac{1}{2}(\hbar \nabla-\frac{i}{c}A^{\hbar,c})^2\psi^{\hbar,c} + V^{\hbar,c} \psi^{\hbar,c} -\frac{1}{2} \frac{\hbar}{c} (\sigma \cdot B^{\hbar,c}) \psi^{\hbar,c}, \label{eq:PPW_Pauli}\\
    -\Delta V^{\hbar,c} &= \rho^{\hbar,c} := | \psi^{\hbar,c}|^2, \label{eq:PPW_V} \\
    -\Delta A^{\hbar,c} &= \frac{1}{c}J^{\hbar,c}, \quad B^{\hbar,c}:=\nabla \times A^{\hbar,c},\label{eq:PPW_PoissonA} \\
    \text{div}A^{\hbar,c} + \frac{1}{c}\partial_t V^{\hbar,c} &= 0, \label{eq:lorenz gauge}\\
    \psi^{\hbar,c}(x,0) &= \psi^{\hbar,c,0}(x).
    \label{eq:PPW_data}
\end{align}
The \textbf{Pauli-Darwin equation} in Coulomb gauge \eqref{eq:coulomb gauge} is given by
\begin{align}
    i\hbar\partial_t \psi^{\hbar,c} &= -\frac{1}{2}(\hbar \nabla-\frac{i}{c}A^{\hbar,c})^2\psi^{\hbar,c} + V^{\hbar,c} \psi^{\hbar,c} -\frac{1}{2} \frac{\hbar}{c} (\sigma \cdot B^{\hbar,c}) \psi^{\hbar,c}, \label{eq:PD_Pauli}\\
    -\Delta V^{\hbar,c} &= \rho^{\hbar,c} := | \psi^{\hbar,c}|^2, \label{eq:PD_V}\\
    -\Delta A^{\hbar,c} &= \frac{1}{c}J^{\hbar,c}-\frac{1}{c}\partial_t\nabla V^{\hbar,c}, \quad B^{\hbar,c}:=\nabla \times A^{\hbar,c},\label{eq:PD_PoissonA} \\
    \text{div}A^{\hbar,c}  &= 0, \label{eq:coulomb gauge}\\
    \psi^{\hbar,c}(x,0) &= \psi^{\hbar,c,0}(x).
    \label{eq:PD_data}
\end{align}
For both equations the Pauli current density $J^{\hbar,c}$ is given by
\begin{equation}
    J^{\hbar,c} = \Im\langle {\psi^{\hbar,c}},(\hbar\nabla -\frac{i}{c}A^{\hbar,c})\psi^{\hbar,c}\rangle +\frac{\hbar}{2}\nabla \times \langle{\psi^{\hbar,c}} ,\sigma \psi^{\hbar,c}\rangle. \label{eq:PPW_current}
\end{equation}
Here we used the following notations
\begin{itemize}
\item $ \mathbf{\sigma} \cdot B^{\hbar,c} = \sum_{k=1}^3 \sigma_k B^{\hbar,c}_k$ denotes the Stern-Gerlach term, where the $\{\sigma_i\}$ are the Pauli matrices (cf. equation \eqref{eq:pauli_matrices} in Section \ref{sec:notations}).
\item $\langle \cdot , \cdot \rangle = \langle \cdot , \cdot \rangle_{\mathbb{C}^2}$ denotes the inner product in $\mathbb{C}^2$ (antilinear in the first variable).
\item $\rho^{\hbar,c} := |\psi^{\hbar,c}|^2:= \langle\psi^{\hbar,c},\psi^{\hbar,c}\rangle = |\psi^{\hbar,c}_1|^2+|\psi^{\hbar,c}_2|^2$ is the scalar charge density
\item $\langle \psi^{\hbar,c}, \nabla \psi^{\hbar,c} \rangle$ is the 3-vector with components $\langle \psi^{\hbar,c},\partial_k \psi^{\hbar,c}\rangle$, $k=1,2,3$. 
\item Similarly, $\langle \psi^{\hbar,c}, A^{\hbar,c} \psi^{\hbar,c} \rangle$ and $\langle \psi^{\hbar,c}, \sigma \psi^{\hbar,c} \rangle$ are the 3-vectors with entries $\langle \psi^{\hbar,c},A^{\hbar,c}_k \psi^{\hbar,c}\rangle$ and $\langle \psi^{\hbar,c}, \sigma_k \psi^{\hbar,c}\rangle$.
\end{itemize}
The self-consistent Pauli-Poisswell (a portmanteau of \emph{Poiss}on and Max\emph{well}, coined in \cite{masmoudi2001selfconsistent}) and Pauli-Darwin equations arise as $O(1/c)$ semi-relativistic approximations of the relativistic Dirac-Maxwell equation for a 4-spinor \cite{GeMM23, masmoudi2001selfconsistent}, where the Poisson equations for $A^{\hbar,c}$ and $V^{\hbar,c}$ are the magnetostatic approximation of Maxwell's equation by keeping terms of order $O(1/c)$ and $O(1/c^2)$, respectively, cf. \cite{besse2007,krause2007unified,shiroto2023improved}. They are semi-relativistic extensions of the Schrödinger-Poisson equation in $\mathbb{R}^3$ via the magnetic Laplacian and the Stern-Gerlach term $\sigma \cdot B^{\hbar,c}$, therefore keeping magnetic and spin effects. 

The two components of the Pauli equation describe the two spin states of a charged spin-$1/2$-particle (i.e. a fermion), whereas the Poisson equations describe the electrodynamic self-interaction of a fast moving particle.
In $\mathbb{R}^3$, the Poisson equation $-\Delta V^{\hbar,c} = \rho^{\hbar,c}$ arises from the Hartree nonlinearity for the Coulomb potential $|x|^{-1}$,
\begin{equation*}
  V^{\hbar,c} = \frac{1}{4 \pi |x|} \ast \rho^{\hbar,c} = (- \Delta)^{- 1} 
|\psi^{\hbar,c}|^2 \qquad \text{in } \mathbb{R}^3 \text{ only.}
\end{equation*}
Since solutions to the Pauli-Poisswell/Darwin equation satisfy the continuity equation
\begin{equation}
  \partial_t \rho^{\hbar,c} + \mathrm{div} J^{\hbar,c} = 0, \label{rho J}
\end{equation}
the "Darwin current" $J^{\hbar,c}-\partial_t \nabla V^{\hbar,c}$ can be written as
\begin{equation}
    J^{\hbar,c}-\partial_t \nabla V^{\hbar,c} = \mathbb{P}J^{\hbar,c},
    \label{eq:leray}
\end{equation}
where $\mathbb{P}$ is the Leray projection on divergence-free vector fields, defined by the Fourier multiplier with matrix-valued symbol
\begin{equation*}
    m_{ij}(\xi) = \delta_{ij} - \frac{\xi_i \xi_j }{|\xi|^2}, \quad 1\leq i
    ,j\leq 3,
\end{equation*}
which is a bounded operator $\mathbb{P}\colon L^p \rightarrow L^p$ for all $1<p<\infty$. \\

We prove the semiclassical limit of vanishing Planck constant $\hbar$ of the Pauli-Poisswell equation \eqref{eq:PPW_Pauli}-\eqref{eq:PPW_data} and the Pauli-Darwin equation \eqref{eq:PD_Pauli}-\eqref{eq:PD_data}.  There are 2 main techniques for such semiclassical limits:

a) \emph{Wigner transforms} \cite{GMMP} yield global (in time) limits towards Vlasov type equations (cf. \cite{LionsPaul, MarkowichMauser}  for the limit of the Schrödinger-Poisson equation to the Vlasov-Poisson equation for \emph{mixed states} in $\mathbb{R}^3$; cf. \cite{ZhZM02} for \emph{pure states} in one space dimension for appropriate non-unique measures valued solutions of the Vlasov-Poisson equation). We introduce the Wigner transform and Wigner measures in section \ref{sec:Wigner}.

b) \emph{WKB methods} yield local (in time) limits to Euler type equations which correspond to monokinetic Vlasov equations for pure states. For the WKB analysis of the Schrödinger-Poisson equation and its convergence towards the Euler-Poisson equation see \cite{carles2007,Carl07,li2003,masaki2011,2003Wigner,Zhan08}. The limit of the cubic NLS was treated in \cite{10.2307/118716,liu2002} and the limit of the Gross-Pitaevskii equation in \cite{gui2022}.

A unified presentation of these two approaches and extension to multi-valued WKB can be found in \cite{carles2012, sparber2003wigner}. \\

In this paper we deal with case b). We prove that in the limit $\hbar \rightarrow 0$ the Pauli-Poisswell equation \eqref{eq:PPW_Pauli}-\eqref{eq:PPW_data}
converges to the \textbf{Euler-Poisswell equation} for the unkown $(\rho^c,u^c)\in \mathbb{R}\times \mathbb{R}^3$ with the electomagnetic fields $E^c,B^c \in \mathbb{R}^3$, depending on the parameter $c$ (speed of light),
\begin{align}
  \label{Euler-Poisswell-Lorentz fields}
   \partial_t \rho^c + \nabla \cdot (\rho^c u^c) &=0,\\
    \partial_t u^c + u^c \cdot \nabla u^c  &= E^c+ \frac{1}{c}u^c\times B^c, \\
     E^c &= -\nabla V^c - \frac{1}{c}\partial_t A^c,  \quad
   B^c 
  =  \nabla \times A^c, \\
   -\Delta V^c &= \rho^c\qquad -\Delta A^c = \frac{1}{c}\rho^c u^c
  \label{E,B Euler Poisswell-Lorentz fields}\\
   (\rho^c,u^c)(x,0) &= (\rho^{c,0},u^{c,0}) (x)\label{Euler-Poisswell-Lorentz fields data}.
\end{align}
and that the Pauli-Darwin equation \eqref{eq:PD_Pauli}-\eqref{eq:PD_data}
converges to the \textbf{Euler-Darwin equation},
\begin{align}
  \label{Euler-Darwin-Lorentz fields}
   \partial_t \rho^c + \nabla \cdot (\rho^c u^c) &=0,\\
    \partial_t u^c + u^c \cdot \nabla u^c  &= E^c+ \frac{1}{c}u^c\times B^c, \\
     E^c &= -\nabla V^c - \frac{1}{c}\partial_t A^c,  \quad
   B^c 
  =  \nabla \times A^c, \\
   -\Delta V^c &= \rho^c\qquad -\Delta A^c = \frac{1}{c}\mathbb{P}(\rho^c u^c)
  \label{E,B Euler Darwin-Lorentz fields}\\
   (\rho^c,u^c)(x,0) &= (\rho^{c,0},u^{c,0}) (x)\label{Euler-Darwin-Lorentz fields data}.
\end{align}
The Euler-Poisswell and Euler-Darwin equations are the $O(1/c)$ semi-relativistic approximations of the Euler-Maxwell equation, which describes the fully relativistic self-consistent interaction with the electromagnetic field, cf. \cite{moller2023euler}.

We will also show that the associated monokinetic Wigner measure $f$ solves the \textbf{Vlasov-Poisswell equation} \begin{gather}
\label{eq:vlasov_limit}
     \partial_t f + p \cdot \nabla_x f +(E + p\times B)\cdot \nabla_p f = 0, \quad E =-\nabla_x V-\partial_t A, \quad B = \nabla \times A, \\
    -\Delta V(x,t) = \rho(x,t) := \int f(x,p,t) \dd p, \label{eq:Poisson_limitV} \\
    -\Delta A(x,t) = J(x,t) := \int pf(x,p,t) \dd p, \label{eq:Poisson_limitA} \\
    f(x,p,0) = f^0(x,p).
    \label{eq:data_limit}
\end{gather}
and the \textbf{Vlasov-Darwin equation} \eqref{eq:vlasov_darwin_limit}-\eqref{eq:data_limit Darwin}
\begin{gather}
\label{eq:vlasov_darwin_limit}
     \partial_t f + p \cdot \nabla_x f +(E + p\times B)\cdot \nabla_p f = 0, \quad E =-\nabla_x V-\partial_t A, \quad B = \nabla \times A, \\
    -\Delta V(x,t) = \rho(x,t) := \int f(x,p,t) \dd p, \label{eq:Poisson_limitV Darwin} \\
    -\Delta A(x,t) = \mathbb{P}J(x,t) := \mathbb{P}\int pf(x,p,t) \dd p, \label{eq:Poisson_limitA Darwin} \\
    f(x,p,0) = f^0(x,p).
    \label{eq:data_limit Darwin}
    \end{gather}
    The Vlasov-Poisswell and Vlasov-Darwin equations are $O(1/c)$ semi-relativistic approximations of the Vlasov-Maxwell equation \cite{bauer2005, besse2007,hankwan2018, pallard2006, seehafer2008}.
    
 We discuss the asymptotic hierarchy of the Euler/Vlasov-Poisswell and Euler/Vlasov-Darwin equations within relativistic fluid/kinetic models with electromagnetic self-interaction in section \ref{physical motivation}.

The analysis of the Pauli-Poisswell and Pauli-Darwin equations is much harder than for the (magnetic) Schrödinger-Poisson equation: Due to the existence of zero modes \cite{erdHos1997semiclassical}, even the case of constant magnetic fields shows hard technical challenges. Moreover, one faces the loss of one derivative in the advective term $A\cdot \nabla$ and the nonlinear coupling of the magnetic vector potential $A$ via the current density $J$. 

Local wellposedness and global weak solutions of the Pauli-Poisswell and Pauli-Darwin equations are treated in \cite{GeMM23}. The related {Pauli-Poisson equation} was solved in \cite{moller2023models}, where an external magnetic field is applied 
while the electric field $V$ is kept self-consistent. The semiclassical limit of the Pauli-Poisson equation to the magnetic Vlasov-Poisson equation with Lorentz force was proved in \cite{moller2023poisson} as an extension of the results for the Schrödinger-Poisson equation in \cite{LionsPaul, MarkowichMauser}. \\

The precise statements about the wellposedness and semiclassical limit can be found in Theorems \ref{thm:main1} and \ref{thm:main result semiclassical limit} in section \ref{main results}.

\section{Statement of the main result}\label{2}

We use a scaling  where the dimensionless semiclassical parameter $\varepsilon$ is proportional to $\hbar$ while keeping $c$ fixed and will omit the $c$-supercript in the remainder of the paper. Thus, the Pauli-Poisswell equation \eqref{eq:PPW_Pauli}-\eqref{eq:PPW_data} reads
\begin{align}
    i\varepsilon\partial_t \psi^{\varepsilon} &= -\frac{1}{2}(\varepsilon \nabla-iA^{\varepsilon})^2\psi^{\varepsilon} + V^{\varepsilon} \psi^{\varepsilon} -\frac{1}{2} \varepsilon (\sigma \cdot B^{\varepsilon}) \psi^{\varepsilon}, \label{eq:PPW_Pauli_scaled}\\
    -\Delta V^{\varepsilon} &= \rho^{\varepsilon} := |\psi^{\varepsilon}|^2, \label{eq:PPW_PoissonV_scaled} \\
    -\Delta A^{\varepsilon}&= J^{\varepsilon}, \label{eq:PPW_PoissonA_scaled} \\ \text{div}A^{\varepsilon} + \partial_t V^{\varepsilon} &= 0, \\
    \psi^{\varepsilon}(x,0) &= \psi^{\varepsilon,0}(x).
\end{align}
and the Pauli-Darwin equation \eqref{eq:PD_Pauli}-\eqref{eq:PD_data} reads, using the Leray projection \eqref{eq:leray},
\begin{align}
    i\varepsilon\partial_t \psi^{\varepsilon} &= -\frac{1}{2}(\varepsilon \nabla-iA^{\varepsilon})^2\psi^{\varepsilon} + V^{\varepsilon} \psi^{\varepsilon} -\frac{1}{2} \varepsilon (\sigma \cdot B^{\varepsilon}) \psi^{\varepsilon}, \label{eq:PD_Pauli_scaled}\\
    -\Delta V^{\varepsilon} &= \rho^{\varepsilon} := |\psi^{\varepsilon}|^2, \label{eq:PD_PoissonV_scaled} \\
    -\Delta A^{\varepsilon}&= \mathbb{P}J^{\varepsilon}, \label{eq:PD_PoissonA_scaled} \\ \text{div}A^{\varepsilon} &= 0, \\
    \psi^{\varepsilon}(x,0) &= \psi^{\varepsilon,0}(x).
\end{align}
with
\begin{equation}
    J^{\varepsilon}= \Im\langle{\psi^{\varepsilon}},(\varepsilon\nabla -iA^{\varepsilon})\psi^{\varepsilon}\rangle + \frac{\varepsilon}{2}\nabla \times \langle{\psi^{\varepsilon}} ,\sigma \psi^{\varepsilon}\rangle.\label{eq:PPW_current_scaled}
\end{equation}

%

\subsection{WKB Ansatz and formal limit}

We assume that the initial data $\psi^{\varepsilon,0} = (\psi^{\varepsilon,0}_1,\psi^{\varepsilon,0}_2)^T \in \mathbb{C}^2$  is of the form
\begin{equation} \label{WKB initial} 
  \begin{pmatrix}
      \psi^{\varepsilon,0}_1 (x) \\
      \psi^{\varepsilon,0}_2 (x)
  \end{pmatrix} = \begin{pmatrix}
      a^{\varepsilon,0}_1 (x) \\
      a^{\varepsilon,0}_2 (x)
  \end{pmatrix}  e^{\frac{i}{\varepsilon} S^{\varepsilon, 0}
  (x)}
\end{equation}
where  $a^{\varepsilon,0}_j$ are the \emph{initial amplitudes} of the components of the 2-spinor. We choose the real-valued \emph{initial phase} $S^{\varepsilon,0}$ to be the same for both spinor components. One then expects that at least for short times the solution $\psi^{\varepsilon}$ will be of the same form
\begin{equation} \label{WKB} 
  \begin{pmatrix}
      \psi^{\varepsilon}_1 (x,t) \\
      \psi^{\varepsilon}_2 (x,t)
  \end{pmatrix} = \begin{pmatrix}
      a^{\varepsilon}_1 (x,t) \\
      a^{\varepsilon}_2 (x,t)
  \end{pmatrix}  e^{\frac{i}{\varepsilon} S^{\varepsilon, 0}
  (x,t)}
\end{equation}
where $a^{\varepsilon}_j$ are the \emph{amplitudes}, $S^{\varepsilon}$ is the \emph{phase}. Writing \eqref{WKB} in spinor notation gives  \begin{equation}
  \label{WKB2} \psi^{\varepsilon}  (x, t) = a^{\varepsilon} (x, t) e^{\frac{i}{\varepsilon} S^{\varepsilon} (x, t)} .
\end{equation}  
where $\psi^{\varepsilon} = (\psi_1^{\varepsilon},\psi_2^{\varepsilon})^T \in \mathbb{C}^2$, $a^{\varepsilon} = (a_1^{\varepsilon},a_2^{\varepsilon})^T\in \mathbb{C}^2$ and $S^{\varepsilon} \in \mathbb{R}$.

\begin{remark} We choose the same phase $S^{\varepsilon,0}$ for the two components of  $\psi^{\varepsilon,0}$ in \eqref{WKB initial}. When dealing with matrix-valued Hamiltonians this  \emph{multicomponent WKB ansatz} (vector-valued amplitude, scalar phase) is a sound mathematical choice and usually chosen in the physics literature \cite{2003Keppeler}, \cite{1966Yamasaki}. 
Choosing two individual phases $S^{\varepsilon,0}_j$, $j=1,2$ would yield  oscillatory cross terms like $$
      \exp(\tfrac{i}{\varepsilon}(S^{\varepsilon}_1-S^{\varepsilon}_2)),
 $$which would complicate the mathematical situation considerably. 
\end{remark}

Substituting the ansatz \eqref{WKB} into \eqref{eq:PPW_Pauli_scaled} yields
\begin{equation*}
    \begin{split}
     &i \varepsilon (\partial_t a^{\varepsilon} + \frac{i}{\varepsilon} a^{\varepsilon} \partial_t S^{\varepsilon})
   e^{\frac{i}{\varepsilon} S^{\varepsilon}} \\ &\qquad = - \frac{\varepsilon^2}{2} \left(\Delta a^{\varepsilon} +
   \frac{2 i}{\varepsilon} \nabla a^{\varepsilon} \cdot \nabla S^{\varepsilon} + \frac{i}{\varepsilon}
   a^{\varepsilon} \Delta S^{\varepsilon} - \frac{1}{\varepsilon^2} | \nabla S^{\varepsilon}|^2 a^{\varepsilon}\right)
   e^{\frac{i}{\varepsilon} S^{\varepsilon}}  \\
   &\qquad \qquad + \left(i \varepsilon A^{\varepsilon} \cdot (\nabla a^{\varepsilon} +
   \frac{i}{\varepsilon} a^{\varepsilon} \nabla S^{\varepsilon})  + (\frac{i\varepsilon}{2}\text{div}A^{\varepsilon} +
   \frac{|A^{\varepsilon}|^2}{2} + V^{\varepsilon} -
   \frac{\varepsilon}{2}  (\sigma \cdot B^{\varepsilon})) a^{\varepsilon}\right)e^{\frac{i}{\varepsilon} S^{\varepsilon}} .
   \end{split}
\end{equation*}
Regroup the terms in the equation above such that
\begin{equation*}
    \begin{split}
  &a^{\varepsilon} \left( \partial_t S^{\varepsilon} + \frac{1}{2} | \nabla S^{\varepsilon}|^2 - A^{\varepsilon} \cdot \nabla S^{\varepsilon} + (\frac{|A^{\varepsilon}|^2}{2} + V^{\varepsilon})
   \right) \\
  - &i \varepsilon \left( \partial_t a^{\varepsilon} +(\nabla S^{\varepsilon} - A^{\varepsilon}) \cdot \nabla a^{\varepsilon} +
    \frac{1}{2} a^{\varepsilon}  (\Delta S^{\varepsilon} - \mathrm{div} A^{\varepsilon}) - \frac{i\varepsilon}{2} \Delta a^{\varepsilon}-
  \frac{i}{2}  (\sigma \cdot B^{\varepsilon}) a^{\varepsilon}\right) = 0.
  \label{extension}
  \end{split}
\end{equation*}
Then we obtain the following system of the \emph{transport equation} for $a^{\varepsilon}$ and the \emph{eikonal equation} for $S^{\varepsilon}$,
\begin{align}
  \label{Madelung-Poisson 0.5}
    \partial_t a^{\varepsilon}+ (\nabla S^{\varepsilon} - A^{\varepsilon}) \cdot \nabla a^{\varepsilon} +
    \frac{1}{2} a^{\varepsilon}  (\Delta S^{\varepsilon} - \mathrm{div} A^{\varepsilon}) &= \frac{i \varepsilon}{2}
    \Delta a^{\varepsilon} + \frac{i}{2}   (\sigma \cdot B^{\varepsilon}) a^{\varepsilon}, \\
    \partial_t S^{\varepsilon} + \frac{1}{2} | \nabla S^{\varepsilon}|^2 - A^{\varepsilon} \cdot \nabla S^{\varepsilon} +
    (\frac{|A^{\varepsilon}|^2}{2} + V^{\varepsilon}) &= 0. \label{Madelung-Poisson 0.5 2}
\end{align}
Define the \emph{velocity} and \emph{initial velocity} as $u^{\varepsilon,0}:= \nabla S^{\varepsilon,0}$ and $u^{\varepsilon} = \nabla S^{\varepsilon}$. After applying $\nabla$ to \eqref{Madelung-Poisson 0.5 2} we obtain the \textbf{Pauli-Poisswell-WKB equation}
\begin{align}
  \label{Madelung-Poisson}
    \partial_t a^{\varepsilon} + (u^{\varepsilon}-A^{\varepsilon}) \cdot \nabla a^{\varepsilon} + \frac{1}{2} a^{\varepsilon} \mathrm{div} (u^{\varepsilon}-A^{\varepsilon})
     &= \frac{i \varepsilon}{2} \Delta a^{\varepsilon} + \frac{i}{2}  (\sigma \cdot B^{\varepsilon}) a^{\varepsilon},\\ \label{Madelung-Poisson u}
    \partial_t u^{\varepsilon} + (u^{\varepsilon}-A^{\varepsilon}) \cdot \nabla u^{\varepsilon} + u^{\varepsilon} \cdot \nabla  A^{\varepsilon} + \nabla
    (\frac{|A^{\varepsilon}|^2}{2} + V^{\varepsilon}) &= 0,\\
      - \Delta V^{\varepsilon} &=  \rho^{\varepsilon} := |a^{\varepsilon}|^2,  \label{V in wkb}\\
  - \Delta A^{\varepsilon} &=   J^{\varepsilon}, 
  \label{A in wkb} \\
    (a^{\varepsilon},u^{\varepsilon}) (x,0) &= (a^{\varepsilon,0}, u^{\varepsilon,0}) (x) \label{wkb data}.
\end{align}
and the \textbf{Pauli-Darwin-WKB equation}
\begin{align}
  \label{Madelung-Poisson Darwin}
    \partial_t a^{\varepsilon} + (u^{\varepsilon}-A^{\varepsilon}) \cdot \nabla a^{\varepsilon} + \frac{1}{2} a^{\varepsilon} \mathrm{div} u^{\varepsilon}
     &= \frac{i \varepsilon}{2} \Delta a^{\varepsilon} + \frac{i}{2}  (\sigma \cdot B^{\varepsilon}) a^{\varepsilon},\\ \label{Madelung-Poisson u Darwin}
    \partial_t u^{\varepsilon} + (u^{\varepsilon}-A^{\varepsilon}) \cdot \nabla u^{\varepsilon} + u^{\varepsilon} \cdot \nabla  A^{\varepsilon} + \nabla
    (\frac{|A^{\varepsilon}|^2}{2} + V^{\varepsilon}) &= 0,\\
      - \Delta V^{\varepsilon} &=  \rho^{\varepsilon} := |a^{\varepsilon}|^2,  \label{V in wkb Darwin}\\
  - \Delta A^{\varepsilon} &=   \mathbb{P}J^{\varepsilon}, 
  \label{A in wkb Darwin} \\
    (a^{\varepsilon},u^{\varepsilon}) (x,0) &= (a^{\varepsilon,0}, u^{\varepsilon,0}) (x). \label{wkb data darwin}
\end{align}
The current density $J^{\varepsilon}$ is given by
\begin{equation}
  J^{\varepsilon} = \varepsilon w^{\varepsilon} + \varepsilon v^{\varepsilon} + \rho^{\varepsilon} (u^{\varepsilon} - A^{\varepsilon}), \label{J exp}
\end{equation}
where
\begin{align}
  v^{\varepsilon} := \frac{1}{2} \nabla \times \langle {a^{\varepsilon}} ,\sigma  a^{\varepsilon}\rangle  
  &&
  w^{\varepsilon} :=  \Im\langle{a^{\varepsilon}} ,\nabla a^{\varepsilon} \rangle 
  \label{w def}
\end{align}
Since the characteristics of \eqref{Madelung-Poisson}-\eqref{wkb data}
can intersect in finite time (i.e. caustics appear \cite{sparber2003wigner}), this approach is only local in time.

\begin{remark}
    In the WKB ansatz one chooses a complex-valued amplitude $a^{\varepsilon}$ in contrast to the Madelung transform, where one uses a real-valued amplitude. This additional degree of freedom avoids the singular quantum pressure in the equation for the velocity $u^{\varepsilon}$. Instead we obtain a skew-symmetric term $i\varepsilon \Delta a^{\varepsilon}$ in the transport equation for $a^{\varepsilon}$ \cite{10.2307/118716}. More details on the Madelung transform and its connection to the hydrodynamic formulation of nonlinear Schrödinger equations can be found in \cite{carles2012}.
\end{remark}
We can formally pass to the limit $\varepsilon \rightarrow 0$ in
\eqref{Madelung-Poisson}-\eqref{Madelung-Poisson u} and \eqref{Madelung-Poisson Darwin}-\eqref{wkb data darwin} and obtain the Euler-Poisswell equation
\begin{align}
  \label{Euler-Poisson}
    \partial_t a + (u-A) \cdot \nabla a + \frac{1}{2} a \mathrm{div} (u-A) &= \frac{i}{2}  (\sigma \cdot B)a,\\
    \partial_t u + (u-A) \cdot \nabla u + u \cdot \nabla  A + \nabla
    (\frac{|A|^2}{2} + V) &= 0\\
    -\Delta V &= \rho = |a|^2,  \label{V Euler Poisswell limit}\\
  -\Delta A &=   \rho (u - A), 
  \label{A Euler Poisswell limit} \\
    (a,u) (x,0) &= (a^0,u^0) (x),\label{Euler Poisswell limit data}
\end{align}
and the Euler-Darwin equation
\begin{align}
  \label{Euler-Darwin}
    \partial_t a + (u-A) \cdot \nabla a + \frac{1}{2} a \mathrm{div} u &= \frac{i}{2}  (\sigma \cdot B)a,\\
    \partial_t u + (u-A) \cdot \nabla u + u \cdot \nabla  A + \nabla
    (\frac{|A|^2}{2} + V) &= 0\\
    -\Delta V &= \rho = |a|^2,  \label{V Euler Darwin limit}\\
  -\Delta A &=   \mathbb{P}(\rho (u - A)), 
  \label{A Euler Darwin limit} \\
    (a,u) (x,0) &= (a^0,u^0) (x),\label{Euler Darwin limit data}
\end{align}
Taking the $\mathbb{C}^2$ inner product with $a$ in \eqref{Euler-Poisson} and taking the real part together with the transformation $u-A\rightarrow u$ yields the Euler-Poisswell equation as in \eqref{Euler-Poisswell-Lorentz fields}-\eqref{Euler-Poisswell-Lorentz fields data} and the Euler-Darwin equation as in \eqref{Euler-Darwin-Lorentz fields}-\eqref{Euler-Darwin-Lorentz fields data}.

\subsection{Main results}
\label{main results}

Define the normed space $X^s$ by 
\begin{equation}
    X^s := H^{s-1}(\mathbb{R}^3,\mathbb{C}^2) \times H^s (\mathbb{R}^3, \mathbb{R}^3), \qquad \| (a, u) \|_{X^s} :=   
  \|a\|_{H^{s - 1}} + \|u\|_{H^s} .
\label{def Xs}
\end{equation}

In the following we will always take $s>7/2$. Theorem \ref{thm:main1} is about the wellposedness and blow up of the Euler-Poisswell equation \eqref{Euler-Poisson}-\eqref{Euler Poisswell limit data} and
the Pauli-Poisswell-WKB equation \eqref{Madelung-Poisson}-\eqref{wkb data} as well as the Euler-Darwin equation \eqref{Euler-Darwin}-\eqref{Euler Poisswell limit data} and the Pauli-Darwin-WKB equation \eqref{Madelung-Poisson Darwin}-\eqref{wkb data darwin}. To our knowledge, these are the first wellposedness results on the Euler-Poisswell and Euler-Darwin equations.  The proof is contained in Proposition \ref{existence} (for Theorem \ref{thm:main result wellposedness}) and Proposition \ref{prop: blow up} (for Theorem \ref{thm:main result blow up}).

\begin{subtheorem}{theorem}
\label{thm:main1}
 \begin{theorem}\textbf{\emph{(Local wellposedness)}}\\
 \label{thm:main result wellposedness}
        Let $\psi^{\varepsilon,0}$ be of
  the form
  \begin{equation}
    \psi^{\varepsilon,0}_j (x) = a^{\varepsilon,0}_j (x) e^{\frac{i}{\varepsilon} S^{\varepsilon,0} (x)}, \quad j = 1,
    2.
  \end{equation}
  Let $s>7/2$ and let 
    \begin{equation}
    \|u^{\varepsilon,0} \|_{H^s} + \|a^{\varepsilon,0}  \|_{H^{s}} + \|u^{0} \|_{H^s} + \|a^{0}  \|_{H^{s}} \leq Q,
  \end{equation}
  for some $Q>0$ independent of $\varepsilon$. Assume that $(a^{\varepsilon,0}, u^{\varepsilon,0})$ converges to $(a^{0}, u^{0})$ in $X^{s-2}$. Then:
  
  \begin{enumerate}[label=(\roman*)] \item There exist respective unique local solutions 
  \begin{equation}(a, u)\in L^{\infty} ([0, T^{0}) , H^{s-1}) \times (L^{\infty} ([0, T^{0}) , H^{s}) \cap W^{1, \infty} ([0, T^{0}) , H^{s-1})),
  \end{equation} 
  to the Euler-Poisswell
  equation \eqref{Euler-Poisson}-\eqref{Euler Poisswell limit data} and the Euler-Darwin equation \eqref{Euler-Darwin}-\eqref{Euler Darwin limit data} with initial data $(a^0, u^0) \in H^{s} \times H^{s}$. Here $T^{0} > 0$ is the maximal (possibly infinite)  time of existence.
  
  \item There exist respective unique local solutions \begin{equation}
      (a^{\varepsilon}, u^{\varepsilon}) \in L^{\infty} ([0, T^{\varepsilon}) , H^{s-1}) \times (L^{\infty} ([0, T^{\varepsilon}) , H^{s})\cap W^{1, \infty} ([0, T^{\varepsilon}) , H^{s-1})),
  \end{equation}
  to the Pauli-Poisswell-WKB equation \eqref{Madelung-Poisson}-\eqref{wkb data} and the Pauli-Darwin-WKB equation \eqref{Madelung-Poisson Darwin}-\eqref{wkb data darwin} with initial
  data $(a^{\varepsilon, 0}, u^{\varepsilon, 0}) \in H^{s} \times H^{s}$. Here $T^{\varepsilon} > 0$ is the maximal (possibly infinite)  time of existence.
  
  \item There exists $\varepsilon_0 > 0$ such that $T^{\varepsilon}$ has a
  uniform positive lower bound $T$ for $0 < \varepsilon <
  \varepsilon_0$. In other words, there exist $\varepsilon_0 > 0$ and $T > 0$,
  such that
  \begin{equation}
    \inf_{0 < \varepsilon < \varepsilon_0} T^{\varepsilon} > T. \label{Teps lower bound}
  \end{equation}
  \end{enumerate}
\end{theorem}

\begin{theorem}\textbf{\emph{(Blow up alternative)}}\\
\label{thm:main result blow up}
  Under the assumptions of Theorem \ref{thm:main result wellposedness}, either the time of existence is global, or finite time blow up occurs. In the latter case it holds that:
  \begin{enumerate}[label=(\roman*)]
  \item If $T^0$ is finite, then the solution $(a, u)$ of \eqref{Euler-Poisson}-\eqref{Euler Poisswell limit data} blows up at $T^0$ such that
  \begin{equation} \lim_{t \rightarrow T^0 -} (\| a (t) \|_{L^{\infty}} + \| u (t) \|_{W^{1,
     \infty}}) =  \infty . \end{equation}
  \item If $T^{\varepsilon}$ is finite, then the solution $(a^{\varepsilon},
  u^{\varepsilon})$ of \eqref{Madelung-Poisson}-\eqref{wkb data} blows up at $T^{\varepsilon}$ such that
  \begin{equation} \lim_{t \rightarrow T^{\varepsilon} -} (\|a^{\varepsilon} (t)\|_{H^1} +\|a^{\varepsilon} (t)\|_{W^{1,
     \infty}} 
     + \| u^{\varepsilon} (t) \|_{W^{1, \infty}}) =  \infty
     . \end{equation}
  Furthermore, 
  for $\varepsilon$ small enough, there exists a $K = K
  (\varepsilon)$, such that
  \begin{equation} \limsup_{t \rightarrow T^{\varepsilon} -} \|u^{\varepsilon} (t) \|_{W^{1, \infty}} + \|a^{\varepsilon} (t)\|_{L^{\infty}} \geq K. \end{equation}
  Here, $K$ goes to infinity as $\varepsilon$ goes to zero.
  \end{enumerate}
\end{theorem}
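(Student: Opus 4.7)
The plan is the classical continuation-criterion argument: for each part I argue by contradiction, assume the maximal time is finite while the listed low-order norms remain bounded on $[0,T^0)$ (resp.\ $[0,T^\varepsilon)$), feed this boundedness into the a priori estimate of Proposition \ref{a priori opt} viewed as a Gronwall-type inequality for $\|(a,u)\|_{X^s}$, and then restart the local existence of Theorem \ref{thm:main result wellposedness} from a time sufficiently close to the alleged blow-up to extend past it, contradicting maximality.

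For part (i) at $\varepsilon=0$ the dispersive term $\tfrac{i\varepsilon}{2}\Delta a$ vanishes and the amplitude equation reduces to a pure first-order transport equation, so the a priori estimate takes the compact form
\begin{equation*}
\frac{d}{dt}\|(a,u)\|_{X^s}^2 \le C\bigl(\|a\|_{L^\infty}+\|u\|_{W^{1,\infty}}\bigr)\bigl(1+\|(a,u)\|_{X^s}^2\bigr).
\end{equation*}
If $\|a\|_{L^\infty}+\|u\|_{W^{1,\infty}}$ stays bounded on $[0,T^0)$, Gronwall yields a uniform $X^s$ bound; restarting the Cauchy problem at $t_0<T^0$ close enough to $T^0$ yields a solution on $[t_0,t_0+\tau]$ with $\tau>0$ depending only on the (bounded) $X^s$ norm at $t_0$, so $t_0+\tau>T^0$ contradicts the maximality of $T^0$. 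Part (ii) follows the same scheme, with the a priori estimate for $\varepsilon>0$ having the larger set $\|a^\varepsilon\|_{H^1},\|a^\varepsilon\|_{W^{1,\infty}},\|a^\varepsilon\|_{W^{2,3}},\|u^\varepsilon\|_{W^{1,\infty}}$ on the right-hand side: the dispersive term forces one extra derivative on $a^\varepsilon$ under $\partial^\alpha$ commutations, and the Stern-Gerlach term $\mathbf{B}^\varepsilon a^\varepsilon$ combined with the Poisson equation $-\Delta A^\varepsilon=J^\varepsilon$ requires Kato-Ponce estimates naturally involving these mixed Lebesgue exponents (the $W^{2,3}$ arising from pairing $\nabla^2 a^\varepsilon\in L^3$ with $u^\varepsilon\in H^s\hookrightarrow L^6$ in three dimensions).

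For the nearly-singular-behavior claim the plan is to bound each of the four blow-up norms of part (ii) in terms of $\|a^\varepsilon\|_{L^\infty}+\|u^\varepsilon\|_{W^{1,\infty}}$ and an $\varepsilon$-dependent prefactor. Conservation of the $L^2$ charge (Lemma \ref{thm:charge_conservation}) together with the energy analogy (Lemma \ref{energy analogy}) controls $\|\varepsilon\nabla a^\varepsilon\|_{L^2}$, so $\|a^\varepsilon\|_{H^1}\lesssim 1/\varepsilon$; Gagliardo-Nirenberg interpolation then dominates $\|a^\varepsilon\|_{W^{1,\infty}}$ and $\|a^\varepsilon\|_{W^{2,3}}$ by positive powers of $\|a^\varepsilon\|_{L^\infty}$ together with positive powers of $\|a^\varepsilon\|_{H^s}$, the latter being bounded via the a priori estimate by an explicit polynomial $\Phi(K,1/\varepsilon)$ whenever $\|a^\varepsilon\|_{L^\infty}+\|u^\varepsilon\|_{W^{1,\infty}}\le K$. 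If this sum stayed below a fixed $K$ on $[0,T^\varepsilon)$, every blow-up norm would be dominated by some $\Psi(K,1/\varepsilon)$, contradicting the already-established blow-up; the smallest $K$ for which this argument fails provides the threshold $K(\varepsilon)$, and because $\Psi$ depends on $1/\varepsilon$ polynomially, inverting its $K$-dependence gives $K(\varepsilon)\to\infty$ as $\varepsilon\to 0$.

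The main obstacle is verifying rigorously that the a priori estimate of Proposition \ref{a priori opt} closes with exactly the listed norms and no others — in particular carefully handling the Stern-Gerlach commutators $[\partial^\alpha,\mathbf{B}^\varepsilon]$ in which $B^\varepsilon=\nabla\times A^\varepsilon$ is itself a derivative of a nonlocally-coupled quantity determined by the magnetic Poisson equation — and making the Gagliardo-Nirenberg interpolation for the nearly-singular step quantitative enough that the divergence $K(\varepsilon)\to\infty$ as $\varepsilon\to 0$ can actually be read off from the $\varepsilon$-power in $\Psi$.
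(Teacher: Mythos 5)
Your overall scheme is the right one and matches the paper's: Proposition~\ref{prop: blow up} argues by contradiction that if $N(a^\varepsilon,u^\varepsilon)(t)$ stayed bounded on $[0,T^\varepsilon)$ then the a~priori estimate~\eqref{energy a priori} would keep $E_s^1$ bounded and the solution could be continued, so $M(t)\to\infty$; and the near-singular statement is obtained by a bootstrap starting from the assumption $\|u^\varepsilon\|_{W^{1,\infty}}+\|a^\varepsilon\|_{L^\infty}\leq K$ together with a second bootstrap assumption $\|a^\varepsilon\|_{H^1}+\|a^\varepsilon\|_{W^{1,\infty}}+\|a^\varepsilon\|_{W^{2,3}}\leq K/\varepsilon$, which is improved to a strictly better bound when $\varepsilon$ is small. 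So parts~(i) and~(ii) of your plan are essentially the paper's argument.

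The near-singular part of your plan has two substantive problems. First, the claim that ``$\Psi$ depends on $1/\varepsilon$ polynomially, inverting its $K$-dependence gives $K(\varepsilon)\to\infty$'' is wrong in kind. Proposition~\ref{a priori opt} gives
\begin{equation*}
E_s^1(t) \leq C\,N(t)^{2s+3}E_s^1(0)\,e^{CN(t)^{2s+3}t},
\end{equation*}
so after imposing $N\leq 2K+1$ the Sobolev norms of $a^\varepsilon$ are bounded by $C(2K+1)^{2s+3}Q\,e^{C(2K+1)^{2s+3}T^*}$, which must be compared against $K/\varepsilon$. The $K$-dependence of this bound is double-exponential-like, not polynomial; solving the bootstrap inequality only yields $K(\varepsilon)\sim |\log\varepsilon|^{1/(2s+3)}$, as the paper's explicit choice $K=\frac{1}{2}\bigl(|\log(\sqrt{\varepsilon}/(CT^*))|^{1/(2s+3)}-1\bigr)$ shows. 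Your plan as written would not deliver the divergence rate you assert, though the qualitative conclusion $K(\varepsilon)\to\infty$ would still survive. Second, the statement that charge conservation plus Lemma~\ref{energy analogy} ``controls $\|\varepsilon\nabla a^\varepsilon\|_{L^2}$, so $\|a^\varepsilon\|_{H^1}\lesssim 1/\varepsilon$'' is not quite available as an unconditional input: Lemma~\ref{energy analogy} is a differential inequality with right-hand side $M(t)^{2s+2}E_s^1(t)$, not a conservation law, so it does not give a free $1/\varepsilon$ bound unless one already controls $N(t)$ --- which is precisely what the bootstrap hypothesis $\|a^\varepsilon\|_{H^1}+\|a^\varepsilon\|_{W^{1,\infty}}+\|a^\varepsilon\|_{W^{2,3}}\leq K/\varepsilon$ is there to supply. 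You need to make that extra bootstrap assumption explicit to break the circularity between ``bound $\|a\|_{W^{1,\infty}}$ requires $\|a\|_{H^s}$'' and ``bound $\|a\|_{H^s}$ via the a~priori estimate requires $N(t)$, which contains $\varepsilon\|a\|_{W^{1,\infty}}$.'' Also, the interpolation you invoke (Gagliardo--Nirenberg) is unnecessary: since $s>7/2$, the embedding $H^{s-1}\hookrightarrow H^1\cap W^{1,\infty}\cap W^{2,3}$ already controls all the required norms in terms of $\|a\|_{H^{s-1}}\leq E_s^1$, which is what the paper uses.
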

\end{subtheorem}


Theorem \ref{thm:main result semiclassical limit} is about the semiclassical limit and will be proved in Section \ref{section semiclassical limit}.

\begin{theorem}\textbf{\emph{Semiclassical limit}}\\
\label{thm:main result semiclassical limit}
  Let $T^0$ and $T^{\varepsilon}$ be the maximal times of existence of the solution $(a,u)$ of the Euler-Poisswell equation \eqref{Euler-Poisson}-\eqref{Euler Poisswell limit data}, respectively of the Euler-Darwin equation \eqref{Euler-Darwin}-\eqref{Euler Darwin limit data}, and the solution $(a^{\varepsilon},u^{\varepsilon})$ of the Pauli-Poisswell-WKB equation \eqref{Madelung-Poisson}-\eqref{wkb data}, respectively of the Pauli-Darwin-WKB equation \eqref{Madelung-Poisson Darwin}-\eqref{wkb data darwin}. Under the assumptions of Theorem \ref{thm:main result wellposedness} we have:
  \begin{enumerate}[label=(\roman*)]
  \item Suppose that $T< T^0$ is a
  lower bound of $T^{\varepsilon}$ satisfying \eqref{Teps lower bound}. We
  have the following semiclassical limit
  \begin{equation}
    \| (a^{\varepsilon} - a, u^{\varepsilon} - u) \|_{L^{\infty} ([0, T], X^{s
    - 2})} \underset{\varepsilon \rightarrow 0}{\longrightarrow} 0.
    \label{semiclassical limit eq}
  \end{equation}
  \item The Wigner transform $f^{\varepsilon}$ (cf. Appendix \ref{sec:Wigner}) converges towards the monokinetic Wigner measure
  \begin{equation} f^{\varepsilon}  (x, \xi, t) \underset{\varepsilon \rightarrow
     0}{\rightharpoonup} f (x, p, t) = \rho (x,t) \delta (p - u (x,t))  \text{ in }
     \mathcal{A}', \quad t \in [0,T) \end{equation}
  where $p=\xi-A(x,t)$ and $\mathcal{A}'$ is the dual of $\mathcal{A}$ defined by \eqref{algebra A}. In particular, f solves the Vlasov-Poisswell equation \eqref{eq:vlasov_limit}-\eqref{eq:data_limit}, respectively the Vlasov-Darwin equation \eqref{eq:vlasov_darwin_limit}-\eqref{eq:data_limit Darwin} in $\mathcal{D}'$ with initial
  data
  \begin{equation}
    f (x, p,0) = \rho^0 (x,t) \delta (p - u^0 (x,t)) .
    \label{eq:vlasov_initial_data}
  \end{equation}
  \item For the macroscopic densities $\rho^{\varepsilon}$ and $J^{\varepsilon}$ it holds that
  \begin{align*}
    \rho^{\varepsilon}   = |a^{\varepsilon}  |^2 &
    \underset{\varepsilon \rightarrow 0}{\longrightarrow} \rho  = |a  |^2 & \text{in } L^{\infty} ([0, T], H^{s - 3} (\mathbb{R}^3)),\\
    J^{\varepsilon}  = \varepsilon w^{\varepsilon} + \varepsilon
    v^{\varepsilon} + \rho^{\varepsilon}  (u^{\varepsilon} - A^{\varepsilon})
    & \underset{\varepsilon \rightarrow 0}{\longrightarrow} J =  \rho (u  - A) & \text{in } L^{\infty} ([0, T], H^{s - 3} (\mathbb{R}^3)),
  \end{align*}
  
  
  \item If $s \geq 6$,
  then the asymptotic behavior of the maximal time of existence $T^{\varepsilon}$
  satisfies
  \begin{equation} \label{eq:Teps T0}\liminf_{t \rightarrow T^{\varepsilon} -} T^{\varepsilon} \geq T^0 .
  \end{equation}
  \end{enumerate}
\end{theorem}
\begin{remark}
  In this paper we consider the three dimensional case. The results can be generalized to higher dimensions by replacing the assumption $s>7/2$ with $s>d/2 + 2$. Note that the Poisson equation for $V^{\varepsilon}$ is no longer equivalent to the Hartree term in dimensions other than three.
\end{remark}

\begin{remark}
  The WKB ansatz is local in time with a singular, monokinetic Wigner measure. Global results can be obtained if the Wigner measure is regular enough: For the linear case a general survey can be found in \cite{GMMP}. For the Schrödinger-Poisson equation see \cite{LionsPaul, MarkowichMauser} and for the Pauli-Poisson equation \cite{moller2023poisson} (the global-in-time semiclassical limit of the Pauli-Poisswell and Pauli-Darwin equations is ongoing work \cite{MaMo23}). The downside is that the convergence is usually only weak and the limit equation only holds in the sense of distributions.
\end{remark}

\begin{remark}For the mathematical techniques we were inspired by three different papers in which the semiclassical limit of different nonlinear Schrödinger equations was discussed. Our energy estimates are a generalization to the spin-magnetic case of the work by E. Grenier in \cite{10.2307/118716}, where the semiclassical limit for the scalar nonlinear Schrödinger equation with nonlinearity $f(|\psi|^2)\psi$ and $A^{\varepsilon},V^{\varepsilon} \equiv 0$ was treated. We also base our considerations on the papers by P. Zhang \cite{2003Wigner} for the semiclassical limit of the Wigner-Poisson equation and by T. Alazard and R. Carles  \cite{carles2007} for the semiclassical limit of the Schrödinger-Poisson equation with doping profile in $d\geq 3$ dimensions.
\end{remark}

\subsection{Asymptotic hierarchy of relativistic Euler equations}
\label{physical motivation}
The \textbf{Euler-Maxwell equation} \cite{germain2014, guo2016} is given by, 
\begin{align}
  \label{Euler-Maxwell}
    &\partial_t \rho +  \nabla \cdot (\rho u)
     = 0\\
    &\partial_t (\rho u) + \nabla\cdot(\rho u \otimes u )  = \rho(E + u\times B),\\
    &\partial_tE -\nabla \times B = -\rho u, \quad \partial_t B + \nabla \times E = 0, \\
    &\nabla \cdot E = \rho, \quad \nabla\cdot B = 0. \\
    &(\rho,u,E,B) (x,0) = (\rho^0 , u^0 , E^0, B^0)(x) \label{eq:eulermaxwell data}
\end{align}
The \textbf{relativistic Euler-Maxwell equation} \cite{guo2014global}, where the relativistic Euler equation is coupled to Maxwell's equations, is given by
\begin{align}
  \label{rel unipolar-Euler-Maxwell}
    &\partial_t (\gamma(u){\rho}) +  \nabla \cdot (\rho \gamma(u) u)
     = 0\\
    &\partial_t(\rho\gamma(u)u) + u\cdot \nabla (\rho\gamma(u) u) = \rho(E +  \gamma(u)u\times B), \label{rel unipolar-Euler-Maxwell u}\\
    &\partial_t E -\nabla \times B = - \rho \gamma(u)u, \quad \partial_t B + \nabla \times E = 0, \\
    &\nabla \cdot E = \rho, \quad \nabla\cdot B = 0. \label{rel unipolar EM divergence E,B}\\
    &(\rho,u,E,B) (x,0) = (\rho^0 , u^0 , E^0, B^0)(x)\label{rel unipolar EM data}
\end{align}
where the gamma factor of special relativity is defined as
\begin{equation}
    \label{eq:gamma}
    \gamma(u)= \frac{1}{\sqrt{1-{\frac{1}{c^2}}|u|^2 }}.
\end{equation}
These models describe the fully relativistic self-consistent interaction with the electromagnetic field. We have the following diagram representing the asympotics between self-consistent models in relativistic quantum physics and der classical limits. The models considered in this paper are written in bold:\\

{\begin{tabular}{ccc}
    {\fbox{{ {Dirac}-Maxwell}}}
    & {$\stackrel{\hbar \rightarrow 0}{\longrightarrow}$}
    & {\fbox{\begin{varwidth}{\textwidth}\centering rel. Vlasov w/ Lorentz force - Maxwell \\ Euler w/ Lorentz force - Maxwell \end{varwidth}}} \\[3mm]
  {$\downarrow$}
    &
    & {$\downarrow$} \\[3mm]

  {\fbox{ {\bf Pauli-Darwin $O(1/c)$}}}
    & {$\stackrel{\hbar \rightarrow 0}{\longrightarrow}$}
    & {\fbox{\begin{varwidth}{\textwidth}\centering  \textbf{Vlasov w/ Lorentz force - Darwin} \\ {\textbf{Euler w/ Lorentz force - Darwin}} \\{$O(1/c)$}\end{varwidth}}} \\[3mm] \\

   {\fbox{\bf{ {Pauli}-Poisswell $O(1/c)$}}}
    & {$\stackrel{\hbar \rightarrow 0}{\longrightarrow}$}
    & {\fbox{\begin{varwidth}{\textwidth}\centering  \textbf{Vlasov w/ Lorentz force - Poisswell} \\ {\textbf{Euler w/ Lorentz force - Poisswell}} \\{$O(1/c)$}\end{varwidth}}} \\[6mm]

 {$\downarrow$}
    &
    & {$\downarrow$} \\[3mm]

      {\fbox{ Pauli-Poisson}}
    & {$\stackrel{\hbar \rightarrow 0}{\longrightarrow}$}
    & {\fbox{\begin{varwidth}{\textwidth}\centering  Vlasov w/ Lorentz force - Poisson \end{varwidth}}} \\[3mm]
{\fbox{{magn. Schrödinger-Maxwell}}}
    & {$\stackrel{\hbar \rightarrow 0}{\longrightarrow}$}
    & {\fbox{\begin{varwidth}{\textwidth}\centering Vlasov w/ Lorentz force
    - Maxwell \end{varwidth}}} \\[3mm]
    {\fbox{{magn. Schrödinger-Poisson}}}
    & {$\stackrel{\hbar \rightarrow 0}{\longrightarrow}$}
    & {\fbox{\begin{varwidth}{\textwidth}\centering Vlasov w/ Lorentz force - Poisson \end{varwidth}}} \\[3mm]
  {$\downarrow$}
    &
    & {$\downarrow$} \\[3mm]
  {$\stackrel{c \rightarrow \infty}{\longrightarrow}$}
    &
    & {$\stackrel{c \rightarrow \infty}{\longrightarrow}$} \\[3mm]
  $\downarrow$
    &
    & $\downarrow$ \\[3mm]
  {\fbox{  {Schr\"odinger}-Poisson}}
    & {$\stackrel{\hbar \rightarrow 0}{\longrightarrow} $} 
    & {\fbox{\begin{varwidth}{\textwidth}\centering Vlasov-Poisson \\ Euler-Poisson \end{varwidth}}}
    \\
\end{tabular}}\\
\bigskip

Since the Pauli-Poisswell equation is the semi-relativistic $O(1/c)$ approximation of the Dirac-Maxwell equation \cite{masmoudi2001selfconsistent} and the Euler-Maxwell equation is the semiclassical limit of the Dirac-Maxwell equation, it is natural that the Euler-Poisswell equation \eqref{Euler-Poisswell-Lorentz fields}-\eqref{Euler-Poisswell-Lorentz fields data} arises as the $O(1/c)$ semi-relativistic approximation of the Euler-Maxwell equation and as the semiclassical limit of the Pauli-Poisswell equation. The Pauli-Darwin and the Euler-Darwin equation are also $O(1/c)$ since the  $O(1/c^2)$ Darwin approximation of Maxwell's equations is coupled to the $O(1/c)$ Pauli equation.

Therefore they are correct semi-relativistic self-consistent $O(1/c)$ models for a plasma of charged particles with self-interaction with the electromagnetic field, cf. \cite{moller2023euler} where the full hierarchy from the (relativistic) Euler-Maxwell equation, the semi-relativistic Euler-Darwin and Euler-Poisswell equations to the non-relativistic Euler-Poisson equation was presented.

Note that \eqref{Euler-Poisswell-Lorentz fields}-\eqref{Euler-Poisswell-Lorentz fields data} includes the $O(1/c^2)$ term $u\times B$, i.e.  the magnetic part of the Lorentz force. This is due to the $O(1/c^2)$ term $|A|^2$ in the magnetic Laplacian in \eqref{eq:PPW_Pauli_scaled}. In a formal $O(1/c)$ asymptotic expansion of the Euler-Maxwell equation \eqref{Euler-Maxwell}-\eqref{eq:eulermaxwell data}, only the electric part of the Lorentz force remains and the magnetic field is decoupled from the fluid dynamics \cite{moller2023euler}.

\begin{remark}{Caveat on "Darwin":} Equations \eqref{eq:PD_V}-\eqref{eq:coulomb gauge} are referred to as \emph{Darwin's equations} and are the $O(1/c^2)$ approximation of Maxwell's equations. The \emph{Poisswell equations} \eqref{eq:PPW_V}-\eqref{eq:lorenz gauge} are the $O(1/c)$ approximation of Maxwell's equations  \cite{besse2007, krause2007unified, moller2023euler, pallard2006, seehafer2009, shiroto2023improved}. 

On the other hand, the \emph{Darwin term} in relativistic quantum mechanics refers to a second order term in the semi-relativistic approximation of the Dirac equation and is related to the \emph{Zitterbewegung} \cite{itzykson2012quantum, mauser1999rigorous, mauser2000semi}. The Pauli equation as the the first order semi-relativistic approximation of the Dirac equation does \emph{not} include the Darwin term. The complete self-consistent second order (in $1/c$) approximation of the Dirac-Maxwell equation, including second order corrections of the Pauli equation such as the Darwin term and spin-orbit-coupling is presented in \cite{GeMM23} and \cite{itzykson2012quantum, moller2023thesis}. 
\end{remark}

In the non-relativistic limit $c\rightarrow \infty$  the Euler-Maxwell equation \eqref{Euler-Maxwell}-\eqref{eq:eulermaxwell data} reduces to the \textbf{Euler-Poisson equation} \cite{peng2007, yang2009}, given by
\begin{align}
  \label{Euler-Poisson no A with rho u}
    \partial_t \rho +  \nabla \cdot (\rho u)
     &= 0\\
    \partial_t u +  u \cdot \nabla u   &= - \nabla
     V,\label{eq:eulerpoisson u}\\
      - \Delta V &=  \rho  \label{V Euler Poisson}, \\
    (\rho,u) (x,0) &= (\rho^0,u^0) (x).
\end{align}
The Euler-Poisson equation arises as the semiclassical limit of the Schrödinger-Poisson equation with WKB initial data \cite{carles2007,Carl07,golse2022,liu2002, masaki2011, 2003Wigner, Zhan08}.

Depending on the sign in front of the term $\nabla V$ in \eqref{eq:eulerpoisson u} the Euler-Poisson equation is \emph{repulsive} (with a negative sign) or \emph{attractive} (with a positive sign). The repulsive Euler-Poisson equation is a non-relativistic self-consistent model for a plasma of charged particles, self-interacting with the electric field via the Poisson equation \eqref{V Euler Poisson}. The Euler-Poisson equation is a self-consistent model for non-relativistic gravitational interaction, e.g. gaseous stars. The repulsive case is less delicate than the attractive case due to dispersive effects of the electric field \cite{guo1998, guopausader, hadzic2019}. 

\subsection{Organization of the article} 

In section \ref{section elliptic} we prove elliptic estimates for the Poisson equations. In section \ref{4} prove an a priori estimate (Proposition \ref{a priori opt}). In section \ref{5}, we prove existence and uniqueness using the a priori estimate and a fixed point argument. In section \ref{section blow up} we  study the behaviour at blow up time. In section \ref{section semiclassical limit} we prove the semiclassical limit of the Pauli-Poisswell-WKB equation \eqref{Madelung-Poisson}-\eqref{wkb data} towards the Euler-Poisswell equation. 

\section{Notations and useful identities}
\label{sec:notations}

The quantities $a^{\varepsilon} = (a_1^{\varepsilon},a_2^{\varepsilon})^T$ and $a = (a_1,a_2)^T$  always denote 2-spinors. On the other hand, $u^{\varepsilon}$, $u$, $v^{\varepsilon}$, $v$, $w^{\varepsilon}$ and $w$ denote vectors in $\mathbb{R}^3$.

For $\mu, \mu_1, \mu_2 >0$ and
\begin{equation*}
a \in L^{\infty} ([0, T) , H^s ), \quad u \in W^{1,
\infty} ([0, T) , H^s)
\end{equation*}
we define the following
functionals. 
\begin{align}
  E_s^{\mu}(t) &:= \|(a,u)\|_{X^s} + \mu \varepsilon \|a (t) \|_{H^s},   
  \\
  E_s^{\mu_1, \mu_2}  (t) &:= E_s^{\mu_1} (t) + \mu_2 \|
  \partial_t u (t) \|_{H^{s - 1}},  \label{E2 def}\\
  M (t) &:= 1 + \|u (t) \|_{W^{1, \infty}} + \|a (t) \|_{L^{\infty}} + \varepsilon \|a (t)
  \|_{H^1} + \varepsilon\|a (t) \|_{W^{1, \infty}} 
  \\
  N  (t) &:= \sup_{0 \leq \tau \leq t} M (\tau) . 
  \label{N def}
\end{align}
where
\begin{equation*}
    \| (a, u) \|_{X^s} :=   
  \|a\|_{H^{s - 1}} + \|u\|_{H^s} .
\end{equation*} 
\subsection{Inequalities}
For two quantities $A,B$, we write $A \lesssim B$ if there exists a constant $C$ such that $A \leq CB$. 
We write $A \sim B$ if $A \lesssim B$ and $B \lesssim A$.  
    If $C({M_1, \ldots, M_n})$ depends on
    constants $M_1, \ldots M_n$ we write $A \lesssim_{M_1, \ldots, M_n} B$
    for $A \lesssim B$.  We omit the dependence on the regularity parameter $s$ and use $\lesssim$ instead of $\lesssim_s$. 
    \subsection{Fourier multipliers and Sobolev spaces}

For $s$ a real number, the operator $D^s$ is the Fourier multiplier with symbol $|\xi|^s$.

The homogeneous Sobolev space $\dot{W}^{s,p}$  of order $s$ is defined by the norm $\|D^s f\|_{L^p}$ and  the inhomogeneous Sobolev space $W^{s,p}$  of order $s$ defined by the norm $ \|\langle D \rangle^s f \|_{L^p}$ where $\langle \xi \rangle = (1+|\xi|^2)^{1/2}$. If $p=2$, we denote $\dot H^s$ and $H^s$ instead of $\dot W^{s,p}$ and $W^{s,p}$.
    

\subsection{Linear and multilinear operations}
If $v,w \in \mathbb{C}^2$, their inner product is defined as
\begin{equation*}
    \langle v , w\rangle_{\mathbb{C}^2} = \langle v , w\rangle = \overline{v_1} w_1 + \overline{v_2} w_2 .
\end{equation*}
By contrast, if $a,b \in \mathbb{C}^3$, we denote $a \cdot b$ for
$$
a \cdot b = \sum_{i=1}^3 a_i b_i.
$$
The $L^2$ inner product of $v$ and $w$ valued in $\mathbb{C}^2$ is denoted by
$$
(v,w)_{L^2} = (v,w) = \int \langle v,w \rangle \dd x.
$$
We will also denote
\begin{equation}
a \cdot \nabla = \sum_{i=1}^3 a_i \partial_i, \qquad a \cdot \sigma = \sum_{i=1}^3 a_i \sigma_i. 
\end{equation}
Here, the Pauli matrices are given by
\begin{align}
    \sigma_1 = \begin{pmatrix}
    0 & 1 \\ 1 & 0
    \end{pmatrix}, && 
     \sigma_2 = \begin{pmatrix}
    0 & -i \\ i & 0
    \end{pmatrix}, &&
     \sigma_3 = \begin{pmatrix}
    1 & 0 \\ 0 & -1
    \end{pmatrix}.\label{eq:pauli_matrices}
\end{align}
They obey the following product laws
\begin{equation}
\label{productlaws}
\begin{split}
& \sigma_i^2 = \operatorname{Id} \qquad \mbox{for $i=1,2,3$}\\
& \sigma_i \sigma_j =  i \sum_{k=1}^3 \varepsilon_{ijk} \sigma_k \qquad \mbox{for $i \neq j$},
\end{split}
\end{equation}
where $\varepsilon_{ijk}$ denotes the Levi-Civita symbol. 

\subsection{Useful identities}
\label{sec:useful_identities}

The Pauli Hamiltonian can be rewritten as
\begin{equation}
    -\frac{1}{2}(\varepsilon \nabla-{i}A^{\varepsilon})^2 -\frac{\varepsilon}{2} (\sigma \cdot B^{\varepsilon})  = -\frac{1}{2}(\sigma \cdot (\varepsilon \nabla-{i}A^{\varepsilon}))^2.
\end{equation}
To prove this identity, we proceed as follows:
\begin{align*}
(\sigma(\varepsilon \nabla-{i}A^{\varepsilon}))^2 & = \sum_{k,\ell} \sigma_k \sigma_\ell (\varepsilon\partial_k - iA^{\varepsilon}_k)(\varepsilon\partial_\ell - iA^{\varepsilon}_\ell) \\
& = \sum_k (\varepsilon\partial_k - iA^{\varepsilon}_k)^2 + i \sum_{k,\ell,m} \epsilon_{k\ell m} \sigma_m (\varepsilon\partial_k - iA^{\varepsilon}_k)(\varepsilon\partial_\ell - iA^{\varepsilon}_\ell) \\
& = (\varepsilon \nabla-{i}A^{\varepsilon})^2 + \varepsilon\sum_{k,\ell,m} \epsilon_{k\ell m} \sigma_m \partial_k A^{\varepsilon}_\ell = (\varepsilon \nabla-{i}A^{\varepsilon})^2 + \varepsilon\sigma \cdot B^{\varepsilon}.
\end{align*}
Here, we used \eqref{productlaws} in the second equality and the cancellations following from the antisymmetry of the Levi-Civita symbol in the third equality.

Note that the spin-magnetic Laplacian (just like the magnetic Laplacian) is symmetric with respect to the $L^2$ scalar product:
\begin{equation}
    \int_{\mathbb{R}^3} \langle f,(\sigma \cdot (\varepsilon \nabla-{i}A^{\varepsilon})^2 g\rangle \dd x = \int_{\mathbb{R}^3} \langle(\sigma \cdot (\varepsilon \nabla-{i}A^{\varepsilon})^2 f, g\rangle \dd x.
\end{equation}


Finally, the current density can be written as
\begin{align}
    J^{\varepsilon} = \Im \langle \sigma \psi, (\sigma \cdot (\varepsilon \nabla-{i}A^{\varepsilon}) \psi \rangle = \Im \langle \sigma_j \psi, \sum_{k=1}^3 \sigma_k(\varepsilon\partial_k -iA^{\varepsilon}_k) \psi \rangle. \label{eq:J_k}
\end{align}
Indeed, using once again the properties of Pauli matrices,
\begin{align*}
\operatorname{Im} \langle \sigma_j \psi, \sum_k \sigma_k (\partial_k - i A^{\varepsilon}_k) \psi \rangle 
& =  \operatorname{Im} \langle \psi , \sum_k \sigma_j \sigma_k (\partial_k - i A^{\varepsilon}_k) \psi \rangle \\
& =  \operatorname{Im} \langle \psi ,  (\partial_j - i A^{\varepsilon}_j) \psi \rangle + \sum_{k,l}  \operatorname{Im} \langle \psi , \epsilon_{jkl} i \sigma_l (\partial_k - i A^{\varepsilon}_k) \psi \rangle.
\end{align*}
Furthermore,
\begin{align*}
\sum_{k,l}  \operatorname{Im} \langle \psi , \epsilon_{jkl} i \sigma_l (\partial_k - i A^{\varepsilon}_k) \psi \rangle & = \sum_{k,l}  \operatorname{Re} \langle \psi , \epsilon_{jkl}  \sigma_l (\partial_k - i A^{\varepsilon}_k) \psi \rangle = \sum_{k,l} \epsilon_{jkl}  \operatorname{Re} \langle \psi , \sigma_l \partial_k \psi \rangle \\
& = \frac{1}{2} \sum \epsilon_{jkl} \partial_k  \langle \psi , \sigma_l \psi \rangle = \frac{1}{2} \left[ \nabla \times \langle \psi , \sigma \psi \rangle \right]_j.
\end{align*}

\section{Elliptic estimates}\label{section elliptic}
From now on we mostly omit the superscript $\varepsilon$ for readability.
\begin{lemma}
  \label{thm:estimates_poisson}Let $s >7/2 $ and $(a,u) \in X^s$. Then the boundary value problems on $\mathbb{R}^3$
\begin{align}
    -\Delta A = J,  && |A(x)| \rightarrow \infty \text{ as } |x|\rightarrow 0
\end{align}
\begin{align}
    -\Delta A = \mathbb{P}J,  && |A(x)| \rightarrow \infty \text{ as } |x|\rightarrow 0
\end{align}
with \begin{equation}
  J = \varepsilon w + \varepsilon v + \rho (u - A),
\end{equation}
where
\begin{align}
  v := \frac{1}{2} \nabla \times \langle {a} ,\sigma  a\rangle  
  &&
  w :=  \Im\langle{a} ,\nabla a \rangle, 
\end{align}
each have a unique solution $A \in \dot{H}^1$. Moreover, the solutions $V$ and $A$ satisfy
  \begin{align}\label{nabla V Hs bound}
    \| \nabla V\|_{H^s} &\lesssim  M (t) \|a\|_{H^{s - 1}},
    \\ \| \nabla A\|_{H^s} &\lesssim  M (t)^{2 s} E_s^1 (t),  \label{VAHs}
    \\ \|  A\|_{W^{1, \infty}} &\lesssim  M (t)^5 .  \label{VALinf}
  \end{align}
  where $E^1_s(t)$ is given by \eqref{Es def} with $\mu=1$ and $M(t)$ is given by \eqref{M def}.
\end{lemma}

Note that there is no estimate for $\|A\|_{L^2}$.

\begin{proof} Since $\mathbb{P}$ is bounded on $L^p$, $1<p<\infty$ we may restrict ourselves to $-\Delta A = J$. The problem  can be written as
\begin{align}
    LA(x) &= f(x), \quad x \in \mathbb{R}^3, \quad |A(x)| \rightarrow 0 \text{ as }|x|\rightarrow \infty \\
    f(x) &= -\varepsilon w(x)-\varepsilon v(x) -\rho(x)u(x)
\end{align}
where $L$ is defined as
\begin{equation}
    L := \Delta - \rho.
\end{equation}

Defining the bilinear form $B$ as
\begin{equation}
    B[A,A'] = \int \nabla A \cdot \nabla A' \dd x + \int A\cdot A'\rho \dd x,
\end{equation}
it is bounded and coercive on $\dot{H}^1(\mathbb{R}^3)$ if we can bound
$$
\int A \cdot A' \rho \dd x \lesssim \| \nabla A \|_{L^2} \| \nabla A' \|_{L^2}, 
$$
which is the case if $a \in L^3 \subset H^1$ by Sobolev embedding. As for the form $(f,\cdot)$, it is bounded on $\dot{H}^1(\mathbb{R}^3)$ if $f \in L^{6/5} \subset \dot{H}^{-1}$, which is also ensured by $(a,u) \in X^1$, as follows from H\"older's inequality and the Sobolev embedding theorem.

Therefore, the Lax-Milgram theorem (or the Riesz representation theorem) gives a unique solution $A \in \dot H^1$ if $(a,u) \in X^1$.\\

\emph{Estimate of $\| \nabla V\|_{H^s}$.}
  Using H{\"o}lder and Sobolev's inequality,
  \begin{equation}
      \| \nabla V\|_{L^2} \lesssim \| \nabla^2 V\|_{L^{6 / 5}}  \lesssim \||a|^2 \|_{L^{6 / 5}} \lesssim (\|a\|_{L^2}^{5 / 6} \|a\|_{L^{\infty}}^{1 / 6})^2 \lesssim M (t) \| a \|_{L^2} . \label{V1}
  \end{equation}
  Using the Kato-Ponce inequality yields
  \begin{equation}
      \| \nabla^2 V\|_{H^{s - 1}} \lesssim \||a|^2 \|_{H^{s - 1}} \lesssim \|a\|_{H^{s - 1}} \|a\|_{L^{\infty}}  \lesssim M (t) \|a\|_{H^{s - 1}} .\label{nabla2 V Hs bound}
  \end{equation}
  Combining  
  \eqref{V1} and \eqref{nabla2 V Hs bound} yields \eqref{nabla V Hs bound}. \\
  
  \emph{Estimate of $\| A\|_{W^{1,\infty}}$.}  By 
  the Gagliardo-Nirenberg inequality, 
  \begin{align*}
    \|A\|_{L^{\infty}} \lesssim \|A\|_{L^6}^{\alpha_1} \|A\|_{\dot{W}^{2,
    6}}^{1 - \alpha_1}, &&
    \| \nabla A\|_{L^{\infty}}  \lesssim  \|A\|_{L^6}^{\alpha_2}
    \|A\|_{\dot{W}^{2, 6}}^{1 - \alpha_2} .
  \end{align*}
  For some $\alpha_1, \alpha_2 \in (0, 1)$. Then
  \begin{equation*} \|A\|_{W^{1, \infty}} \lesssim \|A\|_{L^6} + \|A\|_{\dot{W}^{2, 6}} \end{equation*}
  From $-\Delta A = J$,
  \begin{align*}
    \|A\|_{\dot{W}^{2, 6}} 
    &\lesssim  \varepsilon \|w \|_{L^6} + \varepsilon \| v\|_{L^6} + \|
    \rho u\|_{L^6} + \| \rho A\|_{L^6} \\ 
    &\lesssim  \varepsilon \|a\|_{L^6} \|a \|_{W^{1, \infty}} +\|a\|_{L^{12}}^2 \|
    u\|_{L^{\infty}} + \|a\|_{L^{\infty}}^2 \|A\|_{L^6}\\
     & \lesssim  M (t)^2 (1 +\|\nabla A\|_{L^2}) .
  \end{align*}
  Multiplying
  $-\Delta A = J$ by $A$ and integrating by parts yields
  \begin{align*}
    \| \nabla A\|_{L^2}^2 &=  \int (\varepsilon (v - w) + \rho (u -
    A)) A\dd x \leq  \int (\varepsilon (v - w) + \rho u) A\dd x\\
    &\lesssim  \| \varepsilon (v - w) + \rho u \|_{L^{6 / 5}} \|A\|_{L^6} .
  \end{align*} 
    Using Sobolev's and Hölder's inequalities we obtain
  \begin{align}
    \| \nabla A\|_{L^2} &\lesssim \| \varepsilon (v - w) + \rho u \|_{L^{6 / 5}} \nonumber \\ 
    & \lesssim \varepsilon
    \|a\|_{L^3}  \| \nabla a\|_{L^2}+ \||a|^2  \|_{L^{6 / 5}} \|u  \|_{L^{\infty}} \nonumber \\
    & \lesssim  M (t)^3\label{eq:nabla A L2}
  \end{align}
  Thus,
  \begin{align*} \|A\|_{W^{1, \infty}} &\lesssim M (t)^2 (1 + \| \varepsilon (v - w) + \rho
     u \|_{L^{6 / 5}})\\
     &\lesssim M (t)^5.\end{align*} 
  
  \emph{Estimate of $\|\nabla A\|_{H^s}$.} 
  We have
  \begin{align}
     \| \nabla A\|_{L^2} \lesssim \| \varepsilon (v - w) + \rho u\|_{L^{6 / 5}} & \leq  \| \rho
    u\|_{L^{6 / 5}} + \varepsilon \|v - w\|_{L^{6 / 5}}\nonumber \\
    & \lesssim  \||a|^2  \|_{L^3} \|u  \|_{L^2} + \varepsilon \|a\|_{L^3} 
    \| \nabla a\|_{L^2} \nonumber \\
    & \lesssim  M (t)^2 E_s^1 (t) \label{VAL2}.
  \end{align}
  Next observe
  \begin{align}
      \| \nabla A\|_{\dot{H}^s} &\lesssim \| \nabla^2 A\|_{H^{s - 1}}
      %
      \lesssim 
      \varepsilon \|v - w\|_{H^{s - 1}} + \| \rho u \|_{H^{s -
      1}} + \| \rho A \|_{H^{s - 1}} 
      . \label{VAHs step1}
  \end{align}
  We have
  \begin{align}
    \varepsilon \|v - w\|_{H^{s - 1}} & \lesssim \varepsilon \| a \|_{L^{\infty}} \| \nabla
    a\|_{H^{s - 1}} \nonumber\\
    & \lesssim   M (t) E_s^1 (t),  \label{vw est}
    \end{align}
    and
    \begin{align}
    \| \rho u \|_{H^{s - 1}} 
    & \lesssim  \| a \|_{L^{\infty}}^2 \|u\|_{H^{s - 1}} + \| u
    \|_{L^{\infty}} \| a \|_{L^{\infty}} \| a \|_{H^{s - 1}} \nonumber\\
    & \lesssim  M (t)^2 E_s^1 (t) .  \label{rhou est}
  \end{align}
 In order to estimate $\| \rho A \|_{H^{s - 1}}$ we use the Kato-Ponce inequality and observe
  \begin{align*}
    \| \rho A\|_{H^{s - 1}} & \lesssim  \| \nabla A\|_{H^{s - 2}} \| \rho
    \|_{L^{\infty}} +\| \nabla A\|_{L^{\infty}} \| \rho \|_{H^{s - 2}} + \|
    \rho \|_{H^{s - 1}} \|A\|_{L^{\infty}}\\
    & \lesssim  \| \nabla A\|_{H^{s - 2}} \|a\|_{L^{\infty}}^2 +
    \|A\|_{W^{1, \infty}} \|a\|_{L^{\infty}} \| a \|_{H^{s - 1}} \\
    &\lesssim \| \nabla A\|_{H^{s - 1}}^{\frac{s - 2}{s
    - 1}}  \| \nabla A\|_{L^2}^{\frac{1}{s - 1}} \|a\|_{L^{\infty}}^2 +
    \|A\|_{W^{1, \infty}} \|a\|_{L^{\infty}} \| a \|_{H^{s - 1}} . 
  \end{align*}
  where in the last step we used interpolation.
  Using Young's inequality,
  \begin{equation*} C_1 \| \nabla A\|_{H^{s - 1}}^{\frac{s - 2}{s - 1}}  (\| \nabla A\|_{L^2}
     \|a\|_{L^{\infty}}^{2 (s - 1)})^{\frac{1}{s - 1}} \leq \frac{1}{2}
     \| \nabla A\|_{H^{s - 1}} + C_2 \| \nabla A\|_{L^2} \|a\|_{L^{\infty}}^{2
     (s - 1)} . \end{equation*}
     Then
  \begin{equation}
    \| \rho A\|_{H^{s - 1}} \lesssim \frac{1}{2}
     \| \nabla A\|_{H^{s - 1}} + C_2 \| \nabla A\|_{L^2} \|a\|_{L^{\infty}}^{2
     (s - 1)} +
    \|A\|_{W^{1, \infty}} \|a\|_{L^{\infty}} \| a \|_{H^{s - 1}} . \label{rhoA est}
  \end{equation}
  Finally, we obtain
  \begin{equation}
    \| \nabla A\|_{H^s} \lesssim M (t) (\|A\|_{W^{1, \infty}} + M (t)) E_s^1
    (t) + M (t)^{2 s} E_s^1 (t) . \label{VLA3}
  \end{equation}
  The claim then follows from  \eqref{VALinf}.
\end{proof}

\section{Energy estimate}\label{4}

Define $E^\mu_s(t)$,
\begin{equation}
  E_s^{\mu}(t) := \| (a, u) (t) \|_{X^s} + \mu \varepsilon \|a (t) \|_{H^s}, 
  \label{Es def}
\end{equation}
where $\mu >0$.
In this section we will prove an a priori estimate for $E^1_s(t)$. By Sobolev's inequality, for $s >
7/2$, we have
  \begin{equation}
    M (t) \lesssim E_s^{\mu}  (t) . \label{MEs est}
  \end{equation}
  where $M(t)$ is defined as
  \begin{equation}
      M (t) := 1 + \|u (t) \|_{W^{1, \infty}} + \|a (t) \|_{L^{\infty}} + \varepsilon \|a (t)
  \|_{H^1} + \varepsilon\|a (t) \|_{W^{1, \infty}} 
  \label{M def}
  \end{equation}
  This will allow us to close the argument and obtain an a priori estimate for $E^{1}_s(t)$ via Gronwall's inequality.
 
\begin{proposition}
  \label{a priori opt}Suppose $(a,u)$ is a solution to
  \eqref{Madelung-Poisson} and let $s>7/2$. Then
  \begin{equation}
    E_s^1 (t) \lesssim N (t)^{2 s + 3} E_s^1 (0) e^{C N (t)^{2 s + 3} t} .
    \label{energy a priori}
  \end{equation}
  where $N(t)= \sup_{0\leq t \leq \tau} M(\tau)$.
\end{proposition}


The proof of Proposition \ref{a priori opt} is split into three steps. In the first step we estimate $\|(a,u)\|_{X^s}$ (Lemma \ref{similar est}), similarly  to Lemma 2.2 in
{\cite{2003Wigner}}. 
In the second step (Lemma \ref{energy analogy}) we estimate $\varepsilon \|a (t) \|_{H^s}$. In the third step (section \ref{sec:weaker_assumptions}) we prove Proposition \ref{a priori opt}. 

We note that  \eqref{Madelung-Poisson} conserves the $L^2$-norm: Taking the $\mathbb{C}^2$ inner product with $a$ in \eqref{Madelung-Poisson} and the real part yields
\begin{equation}
  \partial_t |a |^2 + (u-A) \cdot \nabla |a |^2 + |a |^2 \mathrm{div}
  (u-A) = { \varepsilon}\Re i  \langle  {a},\Delta a\rangle. \label{mass step1}
\end{equation}
since $\Re i \langle a,(\sigma \cdot B) a \rangle=0$. Integration by parts yields
\begin{equation}
  \partial_t \|a\|_{L^2}^2 = 0. \label{a cons}
\end{equation}
Moreover, interpolation and Hölder's inequality yield
  \begin{equation}
    \|a\|_{L^p} \lesssim \| a \|_{L^2} + \| a \|_{L^{\infty}} \lesssim M (t),
    \quad 2 \leq p \leq + \infty, \label{alp ori}
  \end{equation}
 There are two key observations concerning the estimate above. (i) 
 Integration by parts transfers the derivative from $|a|^2$ to $
  u-A$. (ii) the term $i \varepsilon \Delta
  a$ cancels.

We first state a small lemma which will be useful in the proof of the a priori estimate.
\begin{lemma}
  \label{charge cons with source} Let $u,A \in W^{1,\infty}$ and $F\in L^2$. Let $a$ be a solution of
  \begin{equation*}
    \partial_t a + (u-A) \cdot \nabla a  =  \frac{i \varepsilon}{2} \Delta
    a + F,
  \end{equation*}
  Then
  \begin{equation}
    \partial_t \|a\|_{L^2}^2 = \int \mathrm{div} (u-A) | a |^2 + 2
    \int \Re
     \langle{a},F\rangle , \label{a cons eq}
  \end{equation}
  and
  \begin{equation}
    \partial_t \|a\|_{L^2} \lesssim (M (t) + \|A\|_{W^{1,\infty}}) \|a\|_{L^2} + \| F
    \|_{L^2} . \label{a cons with source}
  \end{equation}
\end{lemma}

\begin{proof}
  Taking the $\mathbb{C}^2$ inner product with $a$, taking the real part and integrating by parts yields 
  \eqref{a cons eq}. Then \eqref{a cons with source} follows from \eqref{a cons eq}, Hölder's inequality and
  \begin{align*}
    \left| \int \mathrm{div} (u-A) | a |^2 \right| & \leq (\|
    \nabla u \|_{L^{\infty}} + \| \nabla A \|_{L^{\infty}}) \|a\|_{L^2}^2\\
    & \leq (M (t)+ \|A\|_{W^{1,\infty}}) \|a\|_{L^2}^2.
  \end{align*}
\end{proof}

\subsection{Preliminary estimates}\label{5.1}

In order to estimate $E_s^1(t)$ we need to estimate both $\|(a,u)\|_{X^s}$ (Lemma \ref{similar est}) and $\varepsilon\|a\|_{H^s}$ (Lemma \ref{energy analogy}).

\begin{lemma}
  \label{similar est}Let $(a,u)$ be a solution of \eqref{Madelung-Poisson}-\eqref{wkb data}. Then
  \begin{equation}
    \frac{\dd}{\dd t}  \|(a,u)(t)\|_{X^s} \lesssim M (t)^{2 s + 1} E_s^1 (t),
    \label{step 2}
  \end{equation}
\end{lemma}

\begin{proof}
  Apply $D^{s-1}$ to
  (\ref{Madelung-Poisson}),
  \begin{equation}
    \partial_t D^{s-1} a + (u-A) \cdot \nabla
    D^{s-1}a = \frac{i \varepsilon}{2} \Delta
    D^{s-1} a - \mathbf{R}, \label{a eq d}
  \end{equation}
  with
  \begin{align}
  \begin{split}
    \mathbf{R} &= (\mathrm{div} D^{s-1} ((u-A) a) - (u-A) \cdot
    \nabla D^{s-1} a) - \frac{1}{2} D^{s-1} (a
    \mathrm{div} (u-A)) - \frac{i}{2} D^{s-1} ((\sigma \cdot B)a)
    \\ &= F_1 + F_2 + F_3
    \end{split}
    \label{Rc def}
  \end{align}
  Applying \eqref{a cons with source} from
  Lemma \ref{charge cons with source} yields
\begin{equation*} \partial_t \| D^{s-1} a \|_{L^2} \lesssim (M (t) + \| A
     \|_{W^{1, \infty}}) \| D^{s-1} a \|_{L^2} + \| \mathbf{R}
     \|_{L^2}. \end{equation*}
  We estimate the terms in $\mathbf{R}$. The Kato-Ponce inequality yields for $F_1$:
  \begin{align*}
     \|F_1\|_2 
    &\lesssim  \| a \|_{{H}^{s - 1}}   \| \nabla (u-A) \|_{L^{\infty}} +
    \| a \|_{L^{\infty}}   \| u-A \|_{\dot{H}^s},\\
    &\lesssim (M (t) + \| A \|_{W^{1, \infty}}) (\|(a,u)\|_{X^s} +\|
    \nabla A\|_{H^s}).
    \end{align*}
    For $F_2$ we have
    \begin{align*}
     \|F_2\|_2 
    &\lesssim  \| a \|_{{H}^{s - 1}}   \| \nabla (u-A) \|_{L^{\infty}} +
    \| a \|_{L^{\infty}}   \| u-A \|_{\dot{H}^s}\\
    &\lesssim  (M (t) + \| A \|_{W^{1, \infty}}) (\|(a,u)\|_{X^s} +\| \nabla
    A\|_{H^s}).
    \end{align*}
    For $F_3$ we have
    \begin{align*}
     \|F_3\|_2 
    &\lesssim  \| \nabla A \|_{L^{\infty}} \| a \|_{H^{s - 1}} + \|
    \textbf{} a\|_{L^{\infty}} \| \nabla A \|_{H^{s - 1}}\\
    &\lesssim  (M (t) + \| A \|_{W^{1, \infty}}) (\|(a,u)\|_{X^s} +\| \nabla
    A\|_{H^s}) .
  \end{align*}
  It follows that
  \begin{equation}
    \| \mathbf{R} \|_{L^2} \lesssim (M (t) + \| A \|_{W^{1, \infty}}) (\|(a,u)\|_{X^s} +\|
    \nabla A\|_{H^s}) . \label{Rc est}
  \end{equation}
  Therefore,
  \begin{equation}
    \partial_t \|a\|_{H^{s - 1}} \lesssim (M (t) + \| A \|_{W^{1, \infty}})
    (\|(a,u)\|_{X^s} +\| \nabla A\|_{H^s}) \label{a a priori estimate} .
  \end{equation}
  The estimate for 
  $\|u\|_{H^s}$ is similar and we obtain
  \begin{equation}
    \partial_t \|u\|_{H^s} \lesssim (M (t) + \| A \|_{W^{1, \infty}}) (E_s
    (t) +\| \nabla A\|_{H^s}) + \| \nabla V\|_{H^s} . \label{u a priori estimate}
  \end{equation}
  Combining \eqref{a a priori estimate} and \eqref{u a priori estimate}
  and Lemma \ref{thm:estimates_poisson} finishes the proof. 
\end{proof}


\begin{lemma}
  \label{energy analogy} Let $(a,S)$ be a solution to
  \eqref{Madelung-Poisson 0.5}-\eqref{Madelung-Poisson 0.5 2}. Then
  \begin{equation}
    \varepsilon^2  \partial_t \| D^s a\|^2_{L^2} - 2
    \varepsilon \int (D^s S) \partial_t
    \Im \langle{a} ,D^s a \rangle \dd x \lesssim M (t)^{2 s + 2}
    E_s^1 (t) . \label{similar energy}
  \end{equation}
  where $E_s^1 (t)$ is defined by
  \eqref{Es def}.
\end{lemma}

Lemma \ref{energy analogy} in combination with Lemma \ref{similar est} will
yield Proposition \ref{a priori opt}. 

\begin{proof}
Apply $D^s$ to \eqref{Madelung-Poisson},
  \begin{equation}
    \partial_t D^s a + (u-A) \cdot \nabla
    D^s a + \frac{1}{2} a D^s \mathrm{div}
    (u-A) + \mathbf{R}_a = \frac{i \varepsilon}{2} \Delta D^s a,
    \label{new approach}
  \end{equation}
  with
\begin{equation}
    \begin{split}
      \mathbf{R}_a =& (D^s ((u-A) \cdot \nabla a) - (u-A)
      \cdot \nabla D^s a) - \frac{i}{2}
      D^s ((\sigma \cdot B)a)\\
      &+  \frac{1}{2}  (D^s (a \mathrm{div} (u-A)) - a D^s \mathrm{div} (u-A)).
    \end{split} \label{Ra def}
\end{equation}
Multiplying by $\varepsilon^2$ and using Lemma \ref{charge cons with source} leads to
  \begin{align}
    \varepsilon^2\partial_t \| D^s a \|^2_{L^2} &= - 
    \varepsilon^2\int (D^s \mathrm{div} (u-A))\Re\langle D^s  {a} ,a 
    \rangle \dd x 
    + \varepsilon^2\int | D^s a |^2
    \mathrm{div} (u-A) \dd x \nonumber \\ &\qquad - \varepsilon^2\int \Re \langle D^s 
    {a }, \mathbf{R}_a  \rangle \dd x. \label{eq:intermediate 1}
  \end{align}
  On the other hand, taking the $\mathbb{C}^2$ inner product of  \eqref{new approach} with ${a}$ yields
  \begin{equation*} \langle{a}, \partial_t D^{s} a\rangle + (u-A) \cdot\langle{a}  ,
     \nabla D^{s} a\rangle + \frac{1}{2} |a |^2 D^{s}
     \text{div} (u-A) + \langle{a}, \mathbf{R}_a\rangle = \frac{i \varepsilon}{2}  \langle{a},
     \Delta D^{s} a \rangle. \end{equation*}
  Multiplying \eqref{Madelung-Poisson} by
  $D^{s}  \overline{a}$ and complex conjugation yields
  \begin{equation*} \langle \partial_t  {a}, D^{s} a\rangle  + (u-A) \cdot \langle \nabla {a},D^{s} a\rangle 
      + \frac{1}{2}  \text{div} (u-A)\langle{a},
     D^{s} a \rangle = - \frac{i \varepsilon}{2}
     \langle\Delta {a}, D^{s} a\rangle - \frac{i}{2} \langle (\sigma \cdot B){a} ,D^{s} a\rangle . \end{equation*}
  Adding the two equations yields
  \begin{equation} \partial_t  \langle {a} ,D^{s} a\rangle + (u-A) \cdot
     \nabla \langle{a}, D^{s} a\rangle + \frac{1}{2}  | a |^2
     D^{s} \text{div} (u-A) + \mathbf{R}_b = \frac{i \varepsilon}{2}
     \langle a, \Delta D^{s} a\rangle - \frac{i \varepsilon}{2}
     \langle \Delta {a} ,D^{s} a \rangle, 
     \label{eq: I_1 intermediate step} \end{equation}
  where $\mathbf{R}_b$ is defined as
  \begin{equation}
    \mathbf{R}_b := \langle{a} ,\mathbf{R}_a\rangle + \frac{1}{2} \text{div} (u-A) \langle{a} ,D^{s}
    a\rangle  + \frac{i}{2}  \langle (\sigma \cdot B){a},
    D^{s} a\rangle . \label{Rb def}
  \end{equation}
  Taking the imaginary part of \eqref{eq: I_1 intermediate step} and multiplying with $D^s S$ yields
  \begin{align*} &D^s S \partial_t \Im 
    \langle{a}, D^s a\rangle + D^s S (u-A) \cdot \nabla
     \Im 
    \langle{a}, D^s a\rangle + D^s
     S \Im \mathbf{R}_b \\ =& - \frac{\varepsilon}{2} D^s S
     \Re (\langle{a} , \Delta D^s a\rangle - \langle \Delta
     {a} ,D^s a\rangle ) . 
\end{align*}
  Integration by parts yields:
  \begin{align*}
     \int D^s S
     \Re (\langle{a} , \Delta D^s a\rangle - \langle \Delta
     {a} ,D^s a\rangle ) \dd x = \int (\Delta D^s S)
    \Re \langle{a} ,D^s a \rangle +2\nabla D^s S \cdot \Re \langle \nabla {a} ,D^s a \rangle \dd x
  \end{align*}
  Therefore, using $u=\nabla S$ and integration by parts,
  \begin{align}
        &-2\varepsilon \int D^s S \partial_t \Im 
    \langle{a}, D^s a\rangle \dd x = -2\varepsilon\int \text{div}
    ((D^s S) (u-A)) \Im \langle{a},
    D^s a\rangle \dd x + 2\varepsilon \int D^s S
    \Im \mathbf{R}_b \dd x \nonumber \\
    &\qquad  + {\varepsilon^2} \int (D^s \mathrm{div}
    u )\Re \langle{a} ,D^s a \rangle \dd x + 2\varepsilon^2
    \int D^s u \cdot \Re \langle \nabla {a},
    D^s a \rangle \dd x. \label{eq:intermediate 2}
  \end{align}
Thus adding \eqref{eq:intermediate 1} and \eqref{eq:intermediate 2} yields
\begin{align}
   \varepsilon^2 \partial_t \| D^s a\|^2_{L^2} - 2
    \varepsilon \int D^s S \partial_t
    \Im \langle{a} ,D^s a \rangle \dd x &=\varepsilon^2\int | D^s a |^2
    \mathrm{div} (u-A) \dd x - \varepsilon^2\int \Re \langle D^s 
    {a }, \mathbf{R}_a  \rangle \dd x \nonumber\\
    &\qquad -2\varepsilon\int \text{div}
    ((D^s S) (u-A)) \Im \langle{a},
    D^s a\rangle \dd x \nonumber \\&\qquad + 2\varepsilon \int D^s S
    \Im \mathbf{R}_b \dd x  \nonumber \\
    &\qquad + 2\varepsilon^2
    \int D^s u \cdot \Re \langle \nabla {a},
    D^s a \rangle \dd x. \nonumber \\
    &\qquad -2\varepsilon\int (D^s\mathrm{div}A )\Re\langle a, D^s a\rangle  \dd x \nonumber \\
    &=: F_1 + F_2 + F_3 +F_4 +F_5 + F_6 \nonumber
\end{align}
    This eliminates the $s+1$-th order derivative of $u$. Now we can estimate the RHS, which proves the claim. Starting with $F_1$,
    \begin{align*}
    \varepsilon^2\left| \int | D^s a |^2 \mathrm{div} (u-A) \dd x
    \right| &\lesssim \varepsilon^2 \|  a\|_{H^s}^2  \|u-A\|_{W^{1,
    \infty}} \lesssim {M (t)^5} E_s^1 (t)^2.  
    \end{align*}
For $F_2$ we have 
    \begin{align*}
        \varepsilon^2\Re \langle D^s 
    {a }, \mathbf{R}_a  \rangle \dd x &= \varepsilon^2\int \Re \langle D^s  {a}, D^s ((u-A) \cdot
    \nabla a) - (u-A) \cdot \nabla D^s a
    \rangle \dd x \\
    &+ \frac{\varepsilon^2}{2}\int  \Im \langle D^s  {a}, D^s
    ((\sigma \cdot B)a) \rangle  \dd x \\
    &+ \frac{\varepsilon^2}{2}\int \Re \langle D^s
    {a}, D^s (a \mathrm{div} (u-A)) - a D^s \mathrm{div} (u-A) \rangle \dd x \\
    &:= G_1 + G_2 + G_3
    \end{align*}
    For $G_1$ we have
    \begin{align*}
     |G_1| 
    &\lesssim \varepsilon^2(\|a\|_{H^s} \|u-A\|_{W^{1, \infty}} +\|u-A\|_{\dot{H}^s} \|
    \nabla a\|_{L^{\infty}})  \| a\|_{H^s} \nonumber\\
    &\lesssim  M (t)^{2 s + 1} E_s^1 (t)^2 .  
  \end{align*}
    For $G_2$ we have
    \begin{align*}
     |G_2| 
    &\lesssim \varepsilon^2(\|a\|_{H^s} \| \nabla A\|_{L^{\infty}} +\| \nabla A\|_{H^s}
    \|a\|_{L^{\infty}})  \| a\|_{H^s} \nonumber\\
    &\lesssim \varepsilon M (t)^{2 s + 1} E_s^1 (t)^2.
  \end{align*}
  And for $G_3$ we have
  \begin{align*}
    |G_3|
    &\lesssim \varepsilon^2(\|a\|_{H^s} \|u-A\|_{W^{1, \infty}} +\|u-A\|_{\dot{H}^s} \|
    \nabla a\|_{L^{\infty}})  \| a\|_{H^s} \nonumber\\
    &\lesssim  M (t)^{2 s + 1} E_s^1 (t)^2 ,
    \end{align*}
    For $F_3$ we have
   \begin{align*}
       \varepsilon \int D^s S \text{Im} \mathbf{R}_b \dd x &=   \frac{1}{2}\int D^s S \Re (\langle{a}
    D^s (\sigma \cdot {B})a\rangle - \langle \sigma \cdot{B}{a},
    D^s a\rangle) \dd x\\
   &\qquad + \varepsilon \int D^s S \Im \langle a, (D^s ((u-A) \cdot \nabla a) - (u-A)
      \cdot \nabla D^s a)\rangle \dd x \\ &\qquad + \frac{\varepsilon}{2}
      \int (D^s S) \text{div} (u-A)\text{Im} \langle{a}
    D^s a)  \dd x \\
    &\qquad +\frac{\varepsilon}{2} \int (D^s S) \Im \langle a, D^s (a \mathrm{div} (u-A)) - a D^s \mathrm{div} (u-A)\rangle \dd x \\
    &= H_1 + H_2 + H_3 + H_4
   \end{align*}
    For $H_1$ we have
    \begin{align*}
    |H_1| &\lesssim \varepsilon \| u\|_{H^{s-1}}  (\|a \|_{\infty} \| D^s (\sigma \cdot{B})a)\|_{2} +\| D^s a \|_{2} \| (\sigma \cdot {B})a
    \|_{\infty})\\
    &\lesssim  \varepsilon M (t)^{2 s + 2} \|u\|_{H^s}  (\|u\|_{H^s} +\|a \|_{H^s})\\
    &\lesssim  {M (t)^{2 s + 2}} E_s^1 (t)^2.
    \end{align*}
    For $H_2$ we have using the Kato-Ponce inequality
    \begin{align*}
    |  H_2|
    &\lesssim \varepsilon\| D^s S\|_{2} (\| \nabla (u +
    A)\|_{\infty} \| \nabla a \|_{\dot{H}^{s - 1}} +\| \nabla a
    \|_{\infty} \|u-A\|_{\dot{H}^s})\\
    \lesssim& {M (t)^{2 s + 1}} E_s^1 (t)^2 .
  \end{align*}
    For $H_3$ we have
    \begin{align*}
    | H_3 | &\lesssim \varepsilon\|
    \text{div} (u-A)\|_{\infty} \|a \|_{\infty} \|
    \partial^{\alpha}_x a \|_{2}  \| \partial^{\alpha}_x S\|_{2}\\
    &\lesssim {M (t)^6} E_s^1 (t)^2.
  \end{align*}
    For $H_4$ we have using the Kato-Ponce inequality
    \begin{align*}
    |H_4|
    &\lesssim \varepsilon \|u\|_{H^s}  (\|a \|_{W^{1, \infty}} \| \text{div} (u -A)\|_{H^{s - 1}} +\| \text{div} (u-A)\|_{\infty} \|a \|_{H^s})\\
    &\lesssim M (t) \|u\|_{H^s}  ((\|u\|_{H^s} +\| \nabla
    A\|_{H^{s - 1}}) + \varepsilon \|a \|_{H^s})\\
    &\lesssim {M (t)^{2 s + 1}} E_s^1 (t)^2 .
    \end{align*}
    For $F_4$ we have
    \begin{align*}
    \varepsilon\left| \int \text{div}
    ((D^s S) (u-A)) \Im \langle{a},
    D^s a\rangle \dd x \right| 
    &\lesssim \varepsilon \| D^s a \|_{2} \|a \|_{\infty} 
    \| D^s S\|_{H^1}  \|u-A\|_{W^{1, \infty}}\\
    &\lesssim  {M (t)^6} E_s^1 (t)^2.
    \end{align*}
       For $F_5$ we have
    \begin{align*}
    \varepsilon^2\left| \int D^s u \cdot \Re \langle \nabla
    {a} ,D^s a \rangle \dd x \right| &\lesssim  \varepsilon^2\|u\|_{H^s}  \|
    \nabla a \|_{\infty} \|a \|_{H^s} 
    \\ &\lesssim  {M (t)} E_s^1 (t)^2.
    \end{align*}  
    For $F_6$ we have
    \begin{align*}
        \varepsilon\left|\int (D^s\mathrm{div}A )\Re\langle a, D^s a\rangle  \dd x\right| &\lesssim \varepsilon \|\nabla A\|_{H^s} \|a\|_{\infty}\|a\|_{H^s} \\
        &\lesssim M(t)^{2s+2} E_s^1(t)^2.
    \end{align*}
   This completes the proof.
\end{proof}

\subsection{Proof of Proposition \texorpdfstring{\ref{a priori
opt}}{1}}\label{sec:weaker_assumptions}

Together with Lemma \ref{similar est} and Lemma \ref{energy analogy} we now prove Proposition \ref{a priori opt}.

\begin{proof}[Proof of Proposition \ref{a priori opt}]
  Note that
  \begin{align*}
     & \varepsilon^2  \partial_t \| D^s a\|^2_{L^2} - 2
    \varepsilon  \int D^s S \partial_t
    \Im \langle{a} ,D^s a\rangle  \dd x\\
     = & \varepsilon^2  \partial_t \| D^s a\|^2_{L^2} - 2
    \varepsilon \partial_t  \int D^s
    S \Im \langle{a} ,D^s a\rangle  \dd x + 2 \varepsilon
     \int (\partial_t D^s S) \Im \langle{a} ,D^s a\rangle  \dd x.
  \end{align*}
     Observe that using the equation for $u$,
  \begin{align}
    \| \partial_t u\|_{H^{s - 1}} & \lesssim  \| (u-A) \cdot \nabla
    u\|_{H^{s - 1}} + \|u \cdot \nabla A\|_{H^{s - 1}} + \| \nabla
    {|A|^2} \|_{H^{s - 1}} + \| \nabla V\|_{H^{s - 1}} \nonumber\\
    & \lesssim   (\|u\|_{L^{\infty}} +\|A\|_{L^{\infty}})  (\|u\|_{H^s}
    +\|A\|_{\dot{H}^s}) + \| \nabla V\|_{H^{s - 1}} \nonumber\\
    & \lesssim  M (t)^{2 s + 1} E_s^1 (t) .  \label{dtu est}
  \end{align}
  Therefore
  \begin{align*}
    2 \varepsilon \int (\partial_t D^s
    S) \Im \langle{a} ,D^{s} a\rangle  \dd x 
    & \lesssim \varepsilon \| \partial_t u\|_{H^{s - 1}}  \|a\|_{H^s}
    \|a\|_{L^{\infty}} \\
    &\lesssim M (t)^{2 s + 1} E_s^1 (t).
  \end{align*}
  Therefore, together with Lemma \ref{energy analogy} it follows that
  \begin{equation*} \varepsilon^2  \partial_t \| D^s a\|^2_{2} - 2
     \varepsilon \partial_t  \int
     D^s S \Im \langle{a} ,D^s a\rangle  \dd x
     \lesssim M (t)^{2 s + 2} E_s^1 (t)^2 . \end{equation*}
  Integration from $0$ to $t$ yields
    \begin{align}
     \varepsilon^2 \| D^s a \|^2_{L^2} - 2 \varepsilon
     \int D^s S \Im \langle{a} ,D^s a\rangle  \dd x& \nonumber \\
    -  \varepsilon^2 \| D^s a^{\varepsilon,0} \|^2_{L^2} + 2 \varepsilon
     \int D^s S^{\varepsilon,0} \Im \langle{a^{\varepsilon,0}} ,D^s a^{\varepsilon,0}\rangle  \dd x& \lesssim \int_0^t M (r)^{2
    s + 2} E_s^1 (r)^2 \dd r.  \label{integral lemma 8}
  \end{align}
  We have for the first term on the LHS
  \begin{align*}
    \left|2\varepsilon \int D^s S \Im \langle{a} ,D^s a\rangle  \dd x \right| & \lesssim  \varepsilon \|a\|_{L^{\infty}}  \|
    D^s S\|_{L^2}  \| D^s a\|_{L^2}\\
    & \lesssim  \varepsilon M (t) \|u\|_{H^{s - 1}} \|a\|_{H^s}\\
    & \lesssim \varepsilon M (t) \|(a,u)(t)\|_{X^s}\|a\|_{{H}^s},
  \end{align*}
  Moreover,
  \begin{align*}
    \varepsilon^2 \| D^s a^{\varepsilon,0} \|^2_{L^2} + \varepsilon \left|
     \int D^s S^{\varepsilon,0} \Im \langle{a^{\varepsilon,0}} ,D^s a^{\varepsilon,0}\rangle  \dd x \right| &\lesssim
     \varepsilon^2 \| a^{\varepsilon,0} \|^2_{H^s}+\varepsilon \|u^{\varepsilon,0} \|_{H^{s - 1}} \|a^{\varepsilon,0} \|_{H^s} \|a^{\varepsilon,0} \|_{L^{\infty}} \\
    &\lesssim M (0) E_s^1 (0)^2 .
  \end{align*}
  Returning to \eqref{integral lemma 8} we have
  \begin{equation}
    \varepsilon^2 \|a\|_{{H}^s}^2 \lesssim M (0) E_s^1 (0)^2 + \varepsilon
    M (t) \|(a,u)(t)\|_{X^s} \|a\|_{{H}^s} + \int_0^t M (r)^{2 s +
    2} E_s^1 (r)^2 \dd r. \label{aHs int}
  \end{equation}
  Since for $C > 0$
  \begin{equation*} C \varepsilon M (t) \|(a,u)(t)\|_{X^s} \|a\|_{{H}^s} \leq \frac{1}{2}
     \varepsilon^2 \|a\|_{{H}^s}^2 + \frac{1}{2} C^2 M (t)^2 \|(a,u)(t)\|_{X^s}^2, \end{equation*}
  it follows that
  \begin{equation}
    \varepsilon^2 \|a\|_{{H}^s}^2 \lesssim M (0) E_s^1 (0)^2 + M (t)^2 \|(a,u)(t)\|_{X^s} + \int_0^t M (r)^{2 s + 2} E_s^1 (r)^2 \dd r. \label{aHs est}
  \end{equation}
  Rewriting \eqref{step 2} in integral form
  \begin{equation}
    \|(a,u)(t)\|_{X^s}^2 \lesssim M (0)^{2 s + 1} E_s^1 (0)^2 + \int_0^t M (r)^{2 s + 1}
    E_s^1 (r)^2 \dd r,  \label{int form}
  \end{equation}
  and inserting \eqref{int form} into \eqref{aHs est} yields
  \begin{equation}
    \varepsilon^2 \|a\|_{{H}^s}^2 \lesssim M (t)^2 M (0)^{2 s + 1} E_s^1
    (0)^2 + M (t)^2 \int_0^t M (r)^{2 s + 1} E_s^1 (r)^2 \dd r. \label{ahs est}
  \end{equation}
  Thus,
  \begin{equation*} E_s^1 (t)^2 \lesssim N (t)^{2 s + 3} \left( E_s^1 (0)^2 + \int_0^t E_s^1
     (r)^2 \dd r \right) . \end{equation*}
  and the claim follows from Gronwall's inequality.
\end{proof}


\section{Existence and Uniqueness of Solutions}\label{5}

In this section we show the existence and uniqueness of local solutions for the Pauli-Poisswell-WKB
equation \eqref{Madelung-Poisson}-\eqref{wkb data}  (Proposition \ref{existence}). The local wellposedness of the Euler-Poisswell equation \eqref{Euler-Poisson} as well as the Pauli-Darwin and Euler-Darwin equations can be proved analogously.  This proves Theorem \ref{thm:main result wellposedness}. 
We  use a fixed point argument in the space $Y_{\mu_1, \mu_2}^{T, s}$ 
for some subtly selected $\mu_1, \mu_2$.

Let $\mu_1,\mu_2 \in(0,1)$ and define
\begin{equation}
    E^{\mu_1,\mu_2}_s(t) := \| (a,
   u) (t) \|_{X^s} + \mu_1 \varepsilon \|a (t) \|_{H^s} +
   \mu_2 \| \partial_t u (t) \|_{H^{s - 1}}
\end{equation}
Let
\begin{equation}
  Y_{\mu_1, \mu_2}^{T, s} := L^{\infty} ([0, T] , H^s) \times (L^{\infty} ([0, T] , H^{s})\cap W^{1, \infty} ([0, T] , H^{s-1})),
  \label{Y def}
\end{equation}
equipped with the norm
\begin{align*} \| (a, u) \|_{Y_{\mu_1, \mu_2}^{T, s}} &:= \sup_{t\in[0,T]} E^{\mu_1,\mu_2}_s(t) 
\end{align*}
In order to improve readability we will often write $\|(a,u)\|_Y$. Recall that $X^s$ is defined as
\begin{equation}
    X^s := H^{s-1} \times H^s, \qquad \| (a, u) \|_{X^s} :=   
  \|a\|_{H^{s - 1}} + \|u\|_{H^s}.
\end{equation}
\begin{proposition}
  \label{existence} Under the assumptions of Theorem \ref{thm:main result wellposedness}  there exists a
  unique solution $(a^{\varepsilon}, u^{\varepsilon}) \in Y_{\mu_1, \mu_2}^{T, s}$ of the Pauli-Poisswell-WKB equation
  \eqref{Madelung-Poisson}-\eqref{wkb data} for some $T=T(Q,s) > 0$ independent of $\varepsilon$.
\end{proposition}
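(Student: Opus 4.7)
The plan is a Picard iteration built on top of the uniform a priori estimate in Proposition~\ref{a priori opt}. Given $(\tilde a,\tilde u)\in Y^{T,s}_{\mu_1,\mu_2}$, construct the associated potentials $\tilde V,\tilde A$ via \eqref{V in wkb}--\eqref{A in wkb} with source data built from $(\tilde a,\tilde u)$, and define $\Psi(\tilde a,\tilde u):=(a,u)$ as the solution of the linearised system
\begin{equation*}
\begin{aligned}
\partial_t a+(\tilde u-\tilde A)\cdot\nabla a+\tfrac12 a\,\mathrm{div}(\tilde u-\tilde A)&=\tfrac{i\varepsilon}{2}\Delta a+\tfrac{i}{2}\tilde{\mathbf B}\,a,\\
\partial_t u+(\tilde u-\tilde A)\cdot\nabla u+u\cdot\nabla\tilde A+\nabla\!\Big(\tfrac{|\tilde A|^2}{2}+\tilde V\Big)&=0,
\end{aligned}
\end{equation*}
with initial data $(a^{\varepsilon,0},u^{\varepsilon,0})$. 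The first equation is a linear magnetic Schrödinger equation, the second a linear transport equation, so $\Psi$ is well-defined on $Y^{T,s}_{\mu_1,\mu_2}$ by standard linear theory, and a fixed point of $\Psi$ solves \eqref{Madelung-Poisson}.

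The first step (Lemma~\ref{bound lem}) is to show that $\Psi$ maps a closed ball $B_R\subset Y^{T,s}_{\mu_1,\mu_2}$ of radius $R=R(Q)$ into itself, provided $T=T(Q,s)>0$ is small enough. The quantitative content is precisely the chain of Section~\ref{4}: Lemmas~\ref{thm:charge_conservation}, \ref{thm:estimates_poisson}, \ref{similar est}, \ref{energy analogy} and the final argument of Proposition~\ref{a priori opt} go through for the linearised system once every $A,V,\mathbf B$ is replaced by $\tilde A,\tilde V,\tilde{\mathbf B}$ and $N(t)$ is interpreted as the supremum of the tilde-$M(t)$, which is controlled by $R$. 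Charge conservation \eqref{a cons} remains valid because the Stern-Gerlach and $\Delta a$ terms drop out after taking the real part. The bound on $\mu_2\|\partial_t u\|_{H^{s-1}}$ is read off the $u$-equation exactly as in \eqref{dtu est}. Choosing $\mu_1,\mu_2\in(0,1)$ small enough that the weighted pieces cannot overwhelm the $X^s$-part, and then $T$ small enough that the exponential in \eqref{energy a priori} is at most $2$, yields $\|\Psi(\tilde a,\tilde u)\|_{Y}\leq R$.

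The second step (Lemma~\ref{contract lem}) is contraction. For $(\tilde a_j,\tilde u_j)\in B_R$, the differences $(\delta a,\delta u):=\Psi(\tilde a_1,\tilde u_1)-\Psi(\tilde a_2,\tilde u_2)$ satisfy a linear transport/Schrödinger system whose source terms are bilinear in $(\delta\tilde a,\delta\tilde u)$ and quantities uniformly bounded in $B_R$, with analogous elliptic estimates for $\delta\tilde V,\delta\tilde A$ following from Lemma~\ref{thm:estimates_poisson}. Because the endpoint $H^s$-norm of $a$ is only controlled through an $\varepsilon$-weight, the contraction has to be performed one derivative below, in $Y^{T,s-1}_{\mu_1,\mu_2}$, where repeating the proof of Proposition~\ref{a priori opt} for the difference yields
\[
\|\Psi(\tilde a_1,\tilde u_1)-\Psi(\tilde a_2,\tilde u_2)\|_{Y^{T,s-1}}\leq C_R\,T\,\|(\tilde a_1,\tilde u_1)-(\tilde a_2,\tilde u_2)\|_{Y^{T,s-1}},
\]
which is a strict contraction for $T$ small. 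Applying Banach's theorem to $B_R$ (a complete metric space when endowed with the $Y^{T,s-1}$ distance; the limit lies in $B_R\subset Y^{T,s}$ by weak-$*$ compactness of the iterates in $L^\infty_tH^s$ and lower semicontinuity of the norm) gives a unique fixed point $(a^\varepsilon,u^\varepsilon)\in Y^{T,s}_{\mu_1,\mu_2}$, hence existence and uniqueness; the independence $T=T(Q,s)$ of $\varepsilon$ is inherited from the $\varepsilon$-uniformity of Proposition~\ref{a priori opt}.

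The main obstacle I expect is bookkeeping of the delicate cancellation used in Lemma~\ref{energy analogy}: the tricky term involving $\partial^\alpha\mathrm{div}(u)$ coming from $I_2$ must cancel the corresponding one coming from $I_1$. This worked in the nonlinear equation because the transport velocity and the phase $S$ both came from $u$. In the linearisation the transport coefficient is $\tilde u-\tilde A$ while the phase is still that of $u$, so one must check that after replacing $A,V$ by their tilde-versions in \eqref{new approach}--\eqref{Ra def}, the two offending terms (one from multiplying the $a$-equation by $\partial^\alpha\overline a$, one from multiplying it by $\partial^\alpha S\,\overline a$) still share the same $\frac12 a\,\partial^\alpha\mathrm{div}(\tilde u-\tilde A)$ structure and cancel; this is the case by direct inspection, as both terms pick up the \emph{same} tilde-velocity from the single transport operator, and the estimates then close as in Section~\ref{sec:weaker_assumptions}.
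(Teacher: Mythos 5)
Your proposal is correct and mirrors the paper's argument: Banach's fixed point theorem applied to the same linearized map, a ball-mapping lemma (Lemma~\ref{bound lem}) resting on the a priori machinery of Section~\ref{4}, and a contraction step (Lemma~\ref{contract lem}) performed at lower regularity and closed by weak-$*$ compactness. The only small difference is bookkeeping: you contract in $Y^{T,s-1}_{\mu_1,\mu_2}$ with iterates bounded in $Y^{T,s}_{\mu_1,\mu_2}$, whereas the paper contracts in $Y^{T,s}_{\mu_1,\mu_2}$ but then needs $u^1\in H^{s+1}$ and recovers it by starting the iteration from $X^{s+1}$ data and invoking weak compactness — your indexing is arguably cleaner; the cancellation you flag for Lemma~\ref{energy analogy} is indeed the right thing to worry about, and the paper handles it exactly as you anticipate by taking the weight to be $\tilde S$ (the phase of the input $\tilde u$), so that the $\partial_x^{\alpha}\tmop{div}(\tilde u)$ terms arising from the two pieces cancel.
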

Given $(\tilde{a},\tilde{u}) \in Y_{\mu_1, \mu_2}^{T, s}$, let $(a, u)$ be the solution of the linearized equation
\begin{align}
\label{Madelung-Poisson linear}
    \partial_t a + (\tilde{u} - \tilde{A}) \cdot \nabla a + \frac{1}{2}
    a \mathrm{div} (\tilde{u} - \tilde{A}) &= \frac{i \varepsilon}{2} \Delta
    a + \frac{i}{2} (\sigma \cdot \tilde{B})a,\\
    \partial_t u + (\tilde{u} - \tilde{A}) \cdot \nabla u + u \cdot \nabla
    \tilde{A} + \nabla (\frac{| \tilde{A} |^2}{2} + \tilde{V}) &= 0, \\
    -\Delta  \tilde{V}  &=   \tilde{\rho} =  | \tilde{a}|^2, \label{rho tilde
  def}, \\
  - \Delta \tilde{A} &=  \varepsilon \tilde{w} +
  \varepsilon \tilde{v} + \tilde{\rho}  (\tilde{u} - \tilde{A}),
    \\
    (a,u) (x,0) &= (a^{\varepsilon,0},u^{\varepsilon,0}) (x). \label{eq: tilde data}
  \end{align} 
Here
  \begin{align*}
  \tilde{v} := \frac{1}{2} \nabla \times \langle \tilde{a}, \sigma 
  \tilde{a}\rangle, &&
  \tilde{w} :=\frac{i}{2}  (\langle {\tilde{a}} ,\nabla
  \tilde{a}\rangle -\langle \nabla \tilde{a}, {\tilde{a}}\rangle).
\end{align*} $(\sigma \cdot \tilde{B})$ is defined similarly. We will also denote the functionals ${E}^{\mu}_s, {E}^{\mu_1,\mu_2}_s, {M}$ and ${N}$ with a tilde when they are defined with respect to $(\tilde{a},\tilde{u})$, i.e. $\tilde{E}^{\mu}_s, \tilde{E}^{\mu_1,\mu_2}_s, \tilde{M}$ and $\tilde{N}$. Define the solution map
\begin{equation*}
  \Psi^{\varepsilon} : Y_{\mu_1, \mu_2}^{T, s} 
  \rightarrow  Y_{\mu_1, \mu_2}^{T, s},\quad
  (\tilde{a}, \tilde{u})  \mapsto  (a, u).
\end{equation*}
Let $\mathcal{B} =\mathcal{B}^{T,s}_{\mu_1, \mu_2,  L}$ be the ball of radius $L$ in $Y_{\mu_1, \mu_2}^{T, s}$:
\begin{equation*} \mathcal{B}^{T,s}_{\mu_1, \mu_2,  L} := \left\{ (a, u) : \| (a, u)
   \|_{Y_{\mu_1, \mu_2}^{T, s}} \leq L \right\} . \end{equation*}

\begin{lemma}
  \label{bound lem}Let $(a^{\varepsilon,0},u^{\varepsilon,0})$ satisfy the assumptions of Theorem \ref{thm:main result wellposedness}. Then there exists $L = L(Q,s) > 0$ independent of $\varepsilon$ and $\mu_0 > 0$ such that for all $ \mu_1, \mu_2 \in(0 ,\mu_0)$ there exists $T_0 = T_0 (\mu_1, \mu_2, L, s) > 0$ independent of $\varepsilon$ such that for all $T\in [0,T_0)$
  \begin{equation*}  \Psi^{\varepsilon} (\mathcal{B}^{s,T}_{\mu_1, \mu_2, L}) \subset \mathcal{B}^{T,s}_{\mu_1, \mu_2,  L} . \end{equation*}


\begin{proof}
Let $(\tilde{a},\tilde{u})$ such that $\|(\tilde{a},\tilde{u})\|_Y \leq L$. Apply $D^s$ to \eqref{Madelung-Poisson linear},
  \begin{equation*} \partial_t D^s a + (\tilde{u} - \tilde{A})
     \cdot \nabla D^s a = \frac{i \varepsilon}{2} \Delta
     D^s a - \tilde{\mathbf{R}}_c, \end{equation*}
with
  \begin{equation*} \tilde{\mathbf{R}}_c = (\mathrm{div} D^s ((\tilde{u} -
     \tilde{A}) a) - (\tilde{u} - \tilde{A}) \cdot \nabla
     D^s a) - \frac{1}{2} D^s (a
     \mathrm{div} (\tilde{u} - \tilde{A})) - \frac{i}{2} D^s
     ( (\sigma\cdot\tilde{B})a ) . \end{equation*}
     
\emph{Step 1: Estimate for $\|(a,u)\|_{X^s}$.} Similarly to \eqref{a a priori estimate} and \eqref{u a priori estimate}, applying Lemma \ref{charge cons with source} and Sobolev's inequality \eqref{MEs est} yields
  \begin{align*}
    \partial_t \|a\|_{H^{s - 1}} &\lesssim (\tilde{M} (t)
    + \| \tilde{A} \|_{W^{1, \infty}}) \| (a, u) \|_{X^s} + M(t) (\|
    (\tilde{a}, \tilde{u}) \|_{X^s} +\| \nabla \tilde{A} \|_{H^s})\\
    &\lesssim (\tilde{E}_s^1  (t) +\| \nabla \tilde{A}
    \|_{H^s}) \| (a, u) \|_{X^s} .
  \end{align*}
and
  \begin{align*}
    \partial_t \|u\|_{H^s}  &\lesssim  (\| \nabla \tilde{A} \|_{H^s} + \|
    \tilde{u} \|_{H^s}) \|u\|_{L^{\infty}} + \|u\|_{H^s} (\| \tilde{u}
    \|_{W^{1, \infty}} + \| \tilde{A} \|_{W^{1, \infty}}) \\ &\qquad + \| \nabla
    \tilde{A} \|_{H^s}^2 + \| \nabla \tilde{V} \|_{H^s}\\
    &\lesssim (\tilde{E}_s^1
    (t) + \|\nabla \tilde{A} \|_{H^s}) \| (a, u) \|_{X^s} + \| \nabla \tilde{A} \|_{H^s}^2 + \| \nabla
    \tilde{V} \|_{H^s}.
  \end{align*}
From Lemma \ref{thm:estimates_poisson} and \eqref{MEs est} it follows that
  \begin{align*}
    \| \nabla \tilde{V} \|_{H^s} &\lesssim  \tilde{M} (t) \|
    \tilde{a} \|_{H^{s - 1}}
    \lesssim \| (\tilde{a}, \tilde{u}) \|_{Y}^2,\\
    \| \nabla \tilde{A} \|_{H^s} &\lesssim \tilde{M} (t)^{2
    s} \tilde{E}_s^1 (t)
    \lesssim \frac{\| (\tilde{a}, \tilde{u}) \|_{Y}^{2 s + 1}}{\mu_1},\\
    \| \tilde{A} \|_{W^{1,\infty}} &\lesssim \tilde{M} (t)^5
    \lesssim \| (\tilde{a}, \tilde{u}) \|_{Y}^5
  \end{align*}
 Here we used
  \begin{equation*}
    \tilde{E}_s^1 (t) \lesssim \frac{\| (\tilde{a}, \tilde{u})
    \|_{Y}}{\mu_1}, \label{EsY est}
  \end{equation*}
Therefore,
  \begin{align*}
    \partial_t \| (a, u) \|_{X^s}^2 &\lesssim  \frac{\| (\tilde{a},
    \tilde{u}) \|_{Y}^{2 s + 1} + 1}{\mu_1}  \|
    (a, u) \|^2_{X^s} +  \frac{\| (\tilde{a},
    \tilde{u}) \|_{Y}^{4 s + 2} + 1}{\mu_1^2} \| (a, u) \|_{X^s} 
  \end{align*}
  Written in integral form using the assumptions on the initial data in Theorem \ref{thm:main result wellposedness}, 
  \begin{equation}
    \| (a, u) \|_{X^s}^2 \lesssim Q^2 +  \frac{\| (\tilde{a}, \tilde{u})
    \|_{Y}^{2 s + 1} + 1}{\mu_1}  \int_0^t \| (a,
    u) (r) \|^2_{X^s} \dd r +  \frac{\| (\tilde{a}, \tilde{u})
    \|_{Y}^{4 s + 2} + 1}{\mu_1^2} \int_0^t \| (a,
    u) (r) \|_{X^s} \dd r . \label{ext pt1}
  \end{equation}
  \emph{Step 2:} In a similar fashion to the proof of Lemma \ref{energy analogy} we have
  \begin{equation*}
    \varepsilon^2  \partial_t \| D^s a\|^2_{L^2} - 2
    \varepsilon  \int D^s  \tilde{S}
    \partial_t \Im \langle {a}, D^s a \rangle \dd x \lesssim
     \frac{\| (\tilde{a}, \tilde{u}) \|_{Y}^{2 s +
    2} + 1}{\mu_1^2}  E_s^{\mu_1} (t)^2,
  \end{equation*}
  where we used
  \begin{equation*} E_s^1 (t) \lesssim \frac{E_s^{\mu_1} (t)}{\mu_1}. 
    \end{equation*}
  and which is the analogue of \eqref{similar energy}. Now observe
  \begin{equation*} \partial_t \int D^s
     \tilde{S} \Im\langle {a}, D^s a \rangle \dd x =  \int D^s  \tilde{S} \partial_t \Im \langle {a}, D^s a \rangle \dd x +  \int
     (\partial_t D^s \tilde{S}) \Im \langle {a}, D^s a \rangle \dd x \end{equation*}
  The second term on the RHS can be estimated as follows,
  \begin{align*}
    \varepsilon\left|  \int (\partial_t D^s
    \tilde{S}) \Im \langle {a}, D^s a \rangle \dd x \right| 
    &\lesssim  \varepsilon \| \partial_t  \tilde{u} \|_{H^{s - 1}} 
    \|a\|_{H^s} \|a\|_{L^{\infty}}\\
    &\lesssim  \frac{1}{\mu_1 \mu_2}\| (\tilde{a}, \tilde{u}) \|_{Y}
    E_s^{\mu_1} (t)^2,
  \end{align*}
  Therefore,
  \begin{equation*} \varepsilon^2  \partial_t \| D^s a\|^2_{L^2}
     - 2 \varepsilon \partial_t \int
     D^s \tilde{S} \Im \langle{a} ,D^s
     a\rangle \dd x \lesssim \frac{\| (\tilde{a}, \tilde{u}) \|_{Y}^{2 s + 2} + 1 }{(\mu_1 \wedge \mu_2) \mu_1} E_s^{\mu_1}
     (t)^2 . \end{equation*}
  Integrating from $0$ to $t$ and estimating the terms in a similar fashion to \eqref{aHs int} yields
  \begin{align*}
    \varepsilon^2 \|a\|^2_{H^s} & \lesssim  (\varepsilon \| (a^{\varepsilon,0}, u^{\varepsilon,0})
    \|_{X^s} \| (\tilde{a}, \tilde{u}) (0) \|_{X^s} \| a^{\varepsilon,0} \|_{H^s} +
    \varepsilon^2 \|a^{\varepsilon,0} \|^2_{H^s}) \nonumber\\
    &\qquad +  \varepsilon \| a \|_{H^s} \| (\tilde{a}, \tilde{u}) \|_{Y} \| (a, u) \|_{X^s} + \frac{\| (\tilde{a}, \tilde{u})
    \|_{Y}^{2 s + 2} + 1}{(\mu_1 \wedge \mu_2) \mu_1}
     \int_0^t E_s^{\mu_1}(r)^2 \dd r. \end{align*}
  Due to the assumptions of Theorem \ref{thm:main result wellposedness} on the initial data,
  \begin{align}
    \varepsilon^2 \|a\|^2_{H^s} & \lesssim  Q^2 (\| (\tilde{a}, \tilde{u})
    (0) \|_{X^s} + 1) + \varepsilon \| a \|_{H^s} \| (\tilde{a},
    \tilde{u}) \|_{Y} \| (a, u) \|_{X^s} \nonumber\\
    &\qquad +   \frac{\| (\tilde{a}, \tilde{u}) \|_{Y}^{2 s + 2} + 1}{(\mu_1 \wedge \mu_2) \mu_1}  \int_0^t E_s^{\mu_1,\mu_2}
    (r)^2 \dd r.  \label{ext pt2}
  \end{align}
  
\emph{Step 3: Estimate for $\|\partial_t u\|_{H^{s-1}}$.}
  For $\partial_t u$, a very similar estimate to \eqref{dtu est} yields
  \begin{align}
    \| \partial_t u \|_{H^{s - 1}} 
    &\lesssim \left( \| (\tilde{a}, \tilde{u}) \|_{Y}^{2 s + 1} + 1 \right) \left( \| (\tilde{a}, \tilde{u}) \|_{Y}^{2 s + 1} + 1 + \| (a, u) \|_{X^s} \right) .  \label{ext
    pt3}
  \end{align}
  
\emph{Step 4: Estimate for $\|(a,u)\|_{Y}$.} 
  Using \eqref{ext pt1}, \eqref{ext pt2}, \eqref{ext pt3} and
  \begin{align*}
      \frac{\| (\tilde{a}, \tilde{u}) \|_{Y}^{2 s + 1} + 1}{\mu_1} + \frac{\mu_1 \left( \| (\tilde{a}, \tilde{u})
    \|_{Y}^{2 s + 2} + 1 \right)}{(\mu_1 \wedge \mu_2)} \lesssim \frac{\| (\tilde{a}, \tilde{u}) \|_{Y}^{2 s + 2} + 1}{\mu_1 \wedge \mu_2},
  \end{align*}
  yields
  \begin{align*}
    E_s^{\mu_1, \mu_2} (t)^2  \lesssim&  Q^2 (\mu_1^2 \| (\tilde{a}, \tilde{u}) (0) \|_{X^s}
    + 1) + t \left( \frac{\| (\tilde{a}, \tilde{u}) \|_{Y}^{8 s + 4} + 1}{\mu_1^4} \right)\\
     &\qquad + \frac{\| (\tilde{a}, \tilde{u}) \|_{Y}^{2 s + 2} + 1}{\mu_1 \wedge \mu_2} \int_0^t E_s^{\mu_1,\mu_2} (r)^2 \dd r + \mu_1^2 \varepsilon \| a \|_{H^s} \| (\tilde{a}, \tilde{u})
    \|_{Y} \| (a, u) \|_{X^s}\\
     &\qquad + \mu_2^2 \left( \| (\tilde{a}, \tilde{u}) \|_{Y}^{4 s + 2} + 1 \right) \left( \| (\tilde{a}, \tilde{u}) \|_{Y}^{4 s + 2} + 1 + \| (a, u) \|_{X^s}^2 \right).
  \end{align*}
  Using $\| (\tilde{a}, \tilde{u}) \|_{Y}
     \leq L$, 
  \begin{align*}
    E_s^{\mu_1,\mu_2} (a, u) (t)^2 &\lesssim_s  Q^2 (\mu_1^2 L + 1) + \frac{t (L^{8 s + 4} + 1)}{\mu_1^4} + \left( \frac{L^{2 s + 2} + 1}{\mu_1 \wedge
    \mu_2} \right) \int_0^t E_s^{\mu_1,\mu_2} (r)^2 \dd r\\
    &\qquad +  \mu_2^2 (L^{4 s + 2} + 1)^2   + (\mu_1 L + \mu_2^2
    (L^{4 s + 2} + 1)) E_s^{\mu_1,\mu_2}
    (r)^2 ,
  \end{align*} 
  for some constant $C_s$. If we choose $L$ large enough and
  $\mu_1, \mu_2$ small enough such that
  \begin{equation*}
    C_s (\mu_1 L + \mu_2^2 (L^{4 s + 2} + 1))  \leq  \frac{1}{2},
\end{equation*}
and
\begin{equation*}
    C_s (Q^2 (\mu_1^2 L + 1) + \mu_2^2 (L^{4 s + 2} + 1)^2)  \leq 
    \frac{L^2}{4},
  \end{equation*}
  it follows that 
  \begin{equation*} E_s^{\mu_1,\mu_2}  (t)^2 \leq \frac{L^2}{2} + 2 C_s \left(
     \left( \frac{L^{2 s + 2} + 1}{\mu_1 \wedge \mu_2} \right) \int_0^t
     E_s^{\mu_1,\mu_2} (r)^2 \dd r + \frac{t (L^{8 s + 4} + 1)}{\mu_1^4} \right)
     . \end{equation*}
  Gronwall's inequality implies that there is a $T>0$ such that
  \begin{equation*} \|(a,u)\|^2_Y = \sup_{t\in [0,T]} E_s^{\mu_1,\mu_2} (t)^2
     \leq L^2 . 
    \end{equation*}
    This proves the lemma.
\end{proof}
\end{lemma}

\begin{proof}[Proof of Lemma \ref{existence}] Consider the sequence $(a^{n},u^{n})$, defined by
  \begin{align*}
      (a^{n+1},u^{n+1}) := \Psi^{\varepsilon} (a^{n},u^{n}), && ({a}^{0},u^{0}) \in X^{s}
  \end{align*}
  We will show that the sequence is Cauchy in $Y^{T,0}_{\mu_1,\mu_2}$. The claim then follows from the a priori estimate Proposition \ref{a priori opt} and interpolation with $Y^{T,s}_{\mu_1,\mu_2}$ similar to section 3.1.2 in \cite{erdougan2016dispersive}. 
  Denote
  \begin{equation*} 
  \delta a := a^{(n)} - a^{(m)} . \end{equation*}
  The quantities $(\delta u, \delta A, \delta V, \delta B, \delta \rho)$ are similarly defined. Moreover, denote
  \begin{align*}
      \tilde{a}^n = a^{(n-1)}, && \delta \tilde{a} = \tilde{a}^n-\tilde{a}^m
  \end{align*}
  The quantities
  $(\delta \tilde{a}, \delta \tilde{u}, \delta \tilde{A}, \delta \tilde{V},
  \delta \tilde{B}, \delta \tilde{\rho})$ are similarly
  defined. Then $(\delta a,\delta u)$ solves the following system: 
\begin{align}
      \nonumber \partial_t \delta a + (\delta \tilde{u} - \delta \tilde{A}) \cdot
      \nabla a^1 + (\tilde{u}^m - \tilde{A}^m) \cdot \nabla \delta a& \\ +
      \frac{1}{2} a^n \mathrm{div} (\delta \tilde{u} - \delta \tilde{A}) +
      \frac{1}{2} \delta a \mathrm{div} (u^m - A^m)
      &= \frac{i}{2} (\sigma \cdot \delta \tilde{B})a^n +
      \frac{i}{2}(\sigma \cdot \tilde{B}^{m}) \delta a +
      \frac{i \varepsilon}{2} \Delta \delta a,\label{eq:existence a} \\
      \partial_t \delta u + (\delta \tilde{u} - \delta \tilde{A}) \cdot \nabla
      u^1 + (\tilde{u}^m - \tilde{A}^m) \cdot \nabla \delta u& \nonumber \\ + \delta u
      \cdot \nabla \tilde{A}^n + u^2 \cdot \nabla \delta \tilde{A} +
      \nabla (\frac{\delta \tilde{A} \cdot (\tilde{A}^n + \tilde{A}^m)}{2} +
      \delta \tilde{V}) &= 0,\label{eq:existence u}\\
      - \Delta \delta \tilde{V} &= \delta \tilde{\rho},  \label{V tilde in
    sub}\\
    - \Delta \tilde{A} &= \varepsilon \delta
    \tilde{w} - \varepsilon \delta \tilde{v} + (\delta \tilde{\rho}) \tilde{u}^n \nonumber \\&\qquad - \tilde{\rho}^m \delta \tilde{u} + (\delta \tilde{\rho}) \tilde{A}^m - \tilde{\rho}^n \delta \tilde{A} .  \label{A tilde in sub ori}
  \\
      (\delta a,\delta u) (x,0) &= (0,0).
 \label{sub system tilde}
\end{align}

 We start with estimating $\delta a$ in $H^{s-1}$. Apply $D^{s-1}$ to \eqref{eq:existence a},

\begin{equation*}
    \partial_t D^{s-1} \delta a + (\tilde{u}^m - \tilde{A}^m)
    \cdot \nabla D^{s-1} \delta a + \mathbf{R}_{d} = \frac{i\varepsilon}{2} \Delta D^{s-1} \delta a,
  \end{equation*}
  where $\mathbf{R}_{d}$ contains the lower order terms. By Lemma \ref{charge cons with source} 
  \begin{align*}
    \partial_t \| D^{s-1} \delta a \|_{L^2} &\lesssim (M
    (\tilde{a}^m, \tilde{u}^m) (t) + \| \tilde{A}^m \|_{W^{1, \infty}}) \|
    D^{s-1} \delta a \|_{L^2} + \| \mathbf{R}_{d} \|_{L^2}\\ &\lesssim \| D^{s-1} \delta a \|_{L^2} +
    \| \mathbf{R}_{d} \|_{L^2} .
  \end{align*}
  In the second step, we used $(\tilde{a}^m, \tilde{u}^m) \in \mathcal{B}^{T,s}_{
  \mu_1, \mu_2, L}$. $\|\mathbf{R}_d\|_{L^2}^2$ can be easily estimated analogously to the proof above and it follows similarly to \eqref{a a priori
  estimate} that
  \begin{equation*} \partial_t \| \delta a\|_{H^{s - 1}} \lesssim \|
     \delta a\|_{H^{s - 1}} + \| \delta \tilde{u} \|_{H^{s}} + \| \nabla \delta
     \tilde{A} \|_{H^{s - 1}} . 
     \end{equation*}
     For $\delta \tilde{V}$ and $\delta \tilde{A}$, similar to \eqref{nabla V Hs bound},\eqref{VAL2} and \eqref{VLA3} we can prove
  that
  \begin{align*}
    \| \nabla \delta \tilde{V}\|_{H^s} &\lesssim \| \delta \tilde{a}\|_{H^{s
    - 1}} . 
    \\
    \| \nabla \delta \tilde{A}\|_{H^{s - 1}} 
    &\lesssim  \| \delta
    \tilde{u}\|_{H^s} + \| \delta \tilde{a}\|_{H^{s - 1}} + \varepsilon \| \delta \tilde{a}\|_{H^s}.
  \end{align*}
     Now we estimate $\delta u$ in $H^s$. Apply $D^s$ to \eqref{eq:existence u},
  \begin{align*}
      \partial_t D^s \delta u + D^s((\delta \tilde{u} - \delta \tilde{A} )\nabla u^1)+\mathbf{R}_e = 0
  \end{align*}
  where $\mathbf{R}_e$ contains all the remaining terms, similar to $\mathbf{R}_{d}$ above. Since these are of lower order and the estimates are similar to the ones above, we omit the details. The second term yields, after using Kato-Ponce,
  \begin{equation*}
      \|D^s((\delta \tilde{u} - \delta \tilde{A} )\nabla u^n)\|_{L^2} \lesssim
      \|\delta \tilde{u} - \delta \tilde{A}\|_{L^\infty} \|\nabla u^n\|_{H^{s}}.
  \end{equation*}
  Take $s=0$. By the a priori estimate, for $s>7/2$, we have that $\|u^n\|_{H^1}$ is bounded. It follows that
  \begin{equation*} \partial_t \| (\delta a, \delta u) \|_{X^s} \lesssim
     \| (\delta a, \delta u) \|_{X^s} + \| (\delta \tilde{a}, \delta
     \tilde{u}) \|_{Y} . \end{equation*}
  The integral form of this estimate is
  \begin{equation}
    \| (\delta a, \delta u) \|_{X^s} \lesssim \int_0^t \|
    (\delta a, \delta u) (r) \|_{X^s} \dd r + t \| (\delta \tilde{a},
    \delta \tilde{u}) \|_{Y} . \label{delta Xs est}
  \end{equation}
  
  In a similar fashion to the proof of Lemma \ref{energy analogy}
  \begin{equation*} \varepsilon^2  \partial_t \| D^{s} \delta a\|^2_{L^2} - 2
     \varepsilon  \int D^{s}  \tilde{S}
     \partial_t  \langle{\delta a} ,D^{s}
    \delta a\rangle \dd x
     \lesssim%
     \| (\delta a, \delta u) \|_{X^s}^2 + \|
     (\delta \tilde{a}, \delta \tilde{u}) \|_{Y}^2 . \end{equation*}
  It follows that
  \begin{equation*} \varepsilon^2  \partial_t \| D^{s} \delta a\|^2_{L^2} - 2
     \varepsilon \partial_t  \int
     D^{s}  \tilde{S}  \Im \langle{\delta a} ,D^{s}
    \delta a\rangle \dd x \lesssim%
    \| (\delta
     a, \delta u) \|_{X^s}^2 + \| (\delta \tilde{a}, \delta \tilde{u})
     \|_{Y}^2 . \end{equation*}
  Integration from $0$ to $t$ yields
  \begin{align}
    & \varepsilon^2 \| D^{s} \delta a\|^2_{L^2} - 2 \varepsilon
     \int D^{s}  \tilde{S}  \Im \langle{\delta a} ,D^{s}
    \delta a\rangle \dd x 
    \lesssim
    \int_0^t \| (\delta a, \delta u) (r)
    \|_{X^s}^2 \dd r + t \| (\delta \tilde{a}, \delta \tilde{u}) \|_{Y}^2 .  \label{delta int est}
  \end{align}
  The second term of the LHS is bounded by
  \begin{align*}
    \left| 2 \varepsilon  \int D^{s} 
    \tilde{S}  \Im \langle{\delta a} ,D^{s}
    \delta a\rangle \dd x
    \right| & \leq  C_{L,s} \varepsilon \| (\delta a, \delta u) (r) \|_{X^s} \| D^{s} \delta a\|_{L^2}\\
    & \leq  \frac{C_{L,s}}{\mu_1} (\| (\delta a, \delta u) \|^2_{X^s}
    + \mu_1^2 \varepsilon^2 \| D^{s} \delta a\|^2_{L^2}),
  \end{align*}
Choose $\mu_1$ small enough such
  that
  \begin{equation*} \mu_1 C_{L,s} 
  \leq \frac{1}{2},
  \end{equation*}
  then,
  \begin{equation*}   \left| 2 \varepsilon 
     \int D^{s}  \tilde{S}\Im \langle{\delta a} ,D^{s}
    \delta a\rangle \dd x \right| \lesssim \frac{1}{2\mu_1^2} (\|
     (\delta a, \delta u) \|^2_{X^s} + \mu_1^2 \varepsilon^2 \|
     D^{s} \delta a\|^2_{L^2}) . \end{equation*}
  It follows from \eqref{delta int est} that
  \begin{align*}
    & \varepsilon^2 \| \delta a\|^2_{{H}^s} - \frac{1}{2 \mu_1^2} (\|
    (\delta a, \delta u) (r) \|^2_{X^s} + \mu_1^2 \varepsilon^2 \|
    D^{s} \delta a\|^2_{L^2})\\
    &\qquad \qquad \lesssim  \varepsilon^2 \|
    \delta a\|^2_{H^s} - 2 \varepsilon \sum_{|\alpha| = s}  
    \int \partial^{\alpha}_x  \tilde{S} \Im \langle{\delta a} ,\partial^{\alpha}_x
    \delta a\rangle \dd x\\
    &\qquad \qquad \lesssim  \int_0^t \| (\delta a, \delta u) (r)
    \|_{X^s}^2 \dd r + t \| (\delta \tilde{a}, \delta \tilde{u}) \|_{Y}^2 .
  \end{align*}
  Combine this estimate and \eqref{delta Xs est} and it follows that
  \begin{equation}
    \| (\delta a, \delta u) \|_{X^s}^2 + \mu_1^2 \varepsilon^2 \|
    D^{s} \delta a\|^2_{L^2} \lesssim \int_0^t
    \| (\delta a, \delta u) (r) \|_{X^s}^2 \dd r + t \| (\delta
    \tilde{a}, \delta \tilde{u}) \|^2_{Y} .
    \label{unique step 1}
  \end{equation}
  
  \emph{Step 3.}
  For $\partial_t \delta u$, we have the following estimate
  \begin{equation*} \| \partial_t \delta u \|_{H^{s - 1}} \leq C _{\mu_1, L} \left( \|
     (\delta a, \delta u) \|_{X^s} + \| (\delta \tilde{a}, \delta \tilde{u})
     \|_{Y} \right) \end{equation*}
  Choose $\mu_2$ small enough such
  that
  \begin{equation*} \mu_2 C _{\mu_1, L} \leq \frac{1}{4} . \end{equation*}
  Then
  \begin{equation*} \mu_2^2 \| \partial_t \delta u \|_{H^{s - 1}}^2 \leq \frac{1}{4}
     \left( \| (\delta a, \delta u) \|_{X^s}^2 + \| (\delta \tilde{a}, \delta
     \tilde{u}) \|_{Y}^2 \right) . \end{equation*}
     
     \emph{Step 4.}
  Combine this estimate with \eqref{unique step 1} and get
  \begin{align*}
    & \| (\delta a, \delta u) \|_{X^s}^2 + \mu_1^2 \varepsilon^2 \|
     \delta a\|^2_{{H}^{s}} + \mu_2^2 \| \partial_t \delta u
    \|_{H^{s - 1}}^2\\
    \leq & \frac{1}{2} \| (\delta \tilde{a}, \delta \tilde{u})
    \|_{Y}^2 + C \left( \int_0^t \|
    (\delta a, \delta u) (r) \|_{X^s}^2 \dd r + t \| (\delta \tilde{a},
    \delta \tilde{u}) \|^2_{Y} \right) .
  \end{align*}
  By Gronwall's inequality,
  \begin{equation*} \| (\delta a, \delta u) \|_{Y}^2 \leq  \left(
     \frac{1}{2} + C T \right) e^{C T}
     \| (\delta \tilde{a}, \delta \tilde{u}) \|_{Y}^2 .
  \end{equation*}
  For $s=0$, the term $\|\nabla u^1\|_{H^s}$ is controlled by the a priori estimate. We conclude that the sequence is Cauchy in $Y^{T,0}_{\mu_1,\mu_2}$. By the a priori estimate it is also bounded in $Y^{T,s}_{\mu_1,\mu_2}$ and by interpolation we obtain that it is Cauchy for all $Y^{T,s'}_{\mu_1,\mu_2}$ with $s' < s$.
\end{proof}

\section{Behavior at Blowup Time}\label{section blow up}
We now prove Theorem \ref{thm:main result blow up}. We only prove the case of the Pauli-Poisswell-WKB equation \eqref{Madelung-Poisson}-\eqref{wkb data}, since the other part is a direct consequence. 
\begin{proposition}
  \label{prop: blow up}
  Under the assumptions of Proposition \ref{existence} there
  exists a unique local solution $(a^{\varepsilon}, u^{\varepsilon})$ to the Pauli-Poisswell-WKB equation \eqref{Madelung-Poisson}-\eqref{A in wkb}. Suppose the solution is defined on a maximal
  time interval $[0,T^{\ast})$. Then we have
  \begin{equation*}
    \lim_{t \rightarrow T^{\ast} -} M(t) =
    + \infty . \label{blow up sign}
  \end{equation*}
  where $M$ is defined by \eqref{M def}. Furthermore, when $\varepsilon$ is small enough, there exists a $K = K_{\varepsilon, Q, T^{\ast}}$, such that
  \begin{equation*} \limsup_{t \rightarrow T^{\ast} -} \|u^{\varepsilon} (t)
     \|_{W^{1, \infty}} + \|a^{\varepsilon} (t) \|_{L^{\infty}} > K. \end{equation*}
  Here, for fixed $Q$ and $T^{\ast}$, $K_{\varepsilon, Q, T^{\ast}}$ goes to
  infinity as $\varepsilon$ goes to zero.
\end{proposition}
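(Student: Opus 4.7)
The plan is to proceed by a bootstrap/continuity argument based on the a priori estimate of Proposition~\ref{a priori opt}. For the first claim, that $M(a^{\varepsilon}, u^{\varepsilon})(t) \to +\infty$ as $t \to T^{\ast}-$, I would argue by contradiction. If $\limsup_{t \to T^{\ast}-} M(t) \leq K < \infty$, then, after possibly enlarging $K$ to absorb the sup on any initial compact sub-interval, $N(t) = \sup_{\tau \leq t} M(\tau) \leq K$ for all $t \in [0, T^{\ast})$, and Proposition~\ref{a priori opt} gives the uniform bound
\[
E_s^1(t) \leq C K^{2s+3} E_s^1(0)\, e^{C K^{2s+3} T^{\ast}}, \qquad t \in [0, T^{\ast}).
\]
Combined with the estimate \eqref{dtu est} for $\|\partial_t u\|_{H^{s-1}}$, this gives a uniform bound on $(a^{\varepsilon}, u^{\varepsilon})$ in $Y^{T^{\ast}, s}_{\mu_1, \mu_2}$. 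Restarting Proposition~\ref{existence} at time $T^{\ast} - \delta$ with $\delta$ small, the time of existence produced by the fixed-point construction depends only on this uniform bound, hence the solution can be continued strictly beyond $T^{\ast}$, contradicting maximality.

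For the quantitative refinement, the key observation is that the three $\varepsilon$-weighted quantities in $M$ can be interpolated between $\|a\|_{L^{\infty}}$ and $\|a\|_{H^s}$. Since $s > 7/2$, the Gagliardo--Nirenberg inequality (Lemma~\ref{GN}) gives an exponent $\alpha = \alpha(s) \in (0,1)$ such that
\[
\|a\|_{H^1} + \|a\|_{W^{1,\infty}} + \|a\|_{W^{2,3}} \lesssim \|a\|_{L^{\infty}}^{1-\alpha}\, \|a\|_{H^s}^{\alpha}.
\]
Writing $\varepsilon\|a\|_{H^s}^{\alpha} = \varepsilon^{1-\alpha}(\varepsilon\|a\|_{H^s})^{\alpha}$ and introducing $\tilde{M}(t) := 1 + \|u(t)\|_{W^{1,\infty}} + \|a(t)\|_{L^{\infty}}$, this yields
\[
M(t) \leq \tilde{M}(t) + C\, \tilde{M}(t)^{1-\alpha}\bigl(\varepsilon\|a(t)\|_{H^s}\bigr)^{\alpha}\varepsilon^{1-\alpha}.
\]

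Now I would set up a continuity argument on $\varepsilon\|a^{\varepsilon}(t)\|_{H^s}$. Assume for contradiction that $\tilde{M}(t) \leq K$ on $[0,T^{\ast})$ and set
\[
T_1 := \sup\bigl\{ t \in [0,T^{\ast}) : \varepsilon\|a^{\varepsilon}(\tau)\|_{H^s} \leq B \text{ for all } \tau \in [0,t]\bigr\},
\]
with $B := K\, c_s\, \varepsilon^{-(1-\alpha)/\alpha}$, where $c_s$ is chosen so that the interpolation forces $M(t) \leq 2K$ on $[0,T_1)$. Then $N(t) \leq 2K$ on $[0,T_1)$, and Proposition~\ref{a priori opt} gives
\[
\varepsilon\|a^{\varepsilon}(t)\|_{H^s} \leq E_s^1(t) \leq C(2K)^{2s+3} E_s^1(0)\, e^{C(2K)^{2s+3} T^{\ast}}.
\]
The bootstrap $\varepsilon\|a^{\varepsilon}\|_{H^s} \leq B$ closes provided $K \leq K_{\varepsilon}$, where $K_{\varepsilon}$ is defined as the largest $K$ satisfying
\[
C(2K)^{2s+3} E_s^1(0)\, e^{C(2K)^{2s+3} T^{\ast}} \leq K\, c_s\, \varepsilon^{-(1-\alpha)/\alpha}.
\]
A straightforward asymptotic analysis (the exponential dominates) gives $K_{\varepsilon} \sim (\log(1/\varepsilon))^{1/(2s+3)}$, which tends to $+\infty$ as $\varepsilon \to 0$. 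With the bootstrap closed, continuity of $\varepsilon\|a^{\varepsilon}\|_{H^s}$ forces $T_1 = T^{\ast}$, hence $M(t) \leq 2K$ on $[0,T^{\ast})$, contradicting Part~(i).

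The main technical obstacle is the calibration in the second part: one must choose $B$ large enough (in terms of $K$ and $\varepsilon$) so that the Gagliardo--Nirenberg interpolation absorbs the $\varepsilon$-weighted part of $M$ into $\tilde{M}$, while keeping $B$ small enough relative to the a priori bound on $\varepsilon\|a\|_{H^s}$ dictated by Proposition~\ref{a priori opt}. The interplay between the polynomial factor $\varepsilon^{-(1-\alpha)/\alpha}$ available on the interpolation side and the double-exponential dependence $K^{2s+3} e^{CK^{2s+3}T^{\ast}}$ coming from Gronwall is precisely what forces the logarithmic scaling $K_{\varepsilon}\sim (\log(1/\varepsilon))^{1/(2s+3)}$; everything else is standard continuation via Proposition~\ref{existence}.
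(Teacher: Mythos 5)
Your first part follows essentially the same continuation logic as the paper (if $N(t)$ stayed bounded on $[0,T^{\ast})$, Proposition~\ref{a priori opt} would bound $E_s^1$ and the solution could be continued, contradicting maximality); you spell out the restart via Proposition~\ref{existence}, which the paper leaves implicit, but the mechanism is identical.

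The second part, however, has a genuine gap in the interpolation step. You claim
\[
\|a\|_{H^1} + \|a\|_{W^{1,\infty}} + \|a\|_{W^{2,3}} \lesssim \|a\|_{L^{\infty}}^{1-\alpha}\,\|a\|_{H^s}^{\alpha}
\]
with a single $\alpha(s)\in(0,1)$. For the $\|a\|_{W^{1,\infty}}$ and $\|a\|_{W^{2,3}}$ pieces, Gagliardo--Nirenberg in $\mathbb{R}^3$ with $q=\infty$, $r=2$, $m=s$ indeed gives $\alpha=\tfrac{2}{2s-3}\in(0,1)$. But for the $\|a\|_{H^1}$ piece it fails: taking $j=1$, $p=2$, $q=\infty$, $r=2$, $m=s$, $n=3$ in the balance condition $\tfrac{1}{p}=\tfrac{j}{n}+(\tfrac{1}{r}-\tfrac{m}{n})\alpha+\tfrac{1-\alpha}{q}$ forces $\tfrac{1}{6}=(\tfrac{1}{2}-\tfrac{s}{3})\alpha$, hence $\alpha<0$ for $s>3/2$. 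Equivalently, by scaling $a\mapsto a(\lambda\cdot)$, $\|\nabla a\|_{L^2}$ scales as $\lambda^{-1/2}$ while $\|a\|_{L^\infty}^{1-\alpha}\|D^s a\|_{L^2}^{\alpha}$ scales as $\lambda^{(s-3/2)\alpha}$, which can only match with $\alpha<0$; moreover $\|a\|_{L^2}$ is not controlled by $\|a\|_{L^\infty}$ and $\|D^s a\|_{L^2}$ at all. So the crucial inequality you use to reduce $M(t)$ to $\tilde M(t)$ and $\varepsilon\|a\|_{H^s}$ does not hold. The paper sidesteps interpolation entirely: since $s-1>5/2$, the \emph{direct} Sobolev embeddings $H^{s-1}\hookrightarrow H^1$, $H^{s-1}\hookrightarrow W^{1,\infty}$, $H^{s-1}\hookrightarrow W^{2,3}$ give $\|a\|_{H^1}+\|a\|_{W^{1,\infty}}+\|a\|_{W^{2,3}}\lesssim\|a\|_{H^{s-1}}\leq E_s^1$ with \emph{no} factor of $1/\varepsilon$. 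The bootstrap hypothesis is then placed directly on $\|a\|_{H^1}+\|a\|_{W^{1,\infty}}+\|a\|_{W^{2,3}}\leq K/\varepsilon$ (so that $\varepsilon$ times it is $\leq K$, giving $N\leq 2K+1$), and the a priori estimate improves it provided $C(2K+1)^{2s+3}Q\,e^{C(2K+1)^{2s+3}T^{\ast}}<K/\varepsilon$, which yields the same $K_\varepsilon\sim(\log(1/\varepsilon))^{1/(2s+3)}$ scaling you arrive at. Replacing your interpolation by this Sobolev-embedding bound and bootstrapping on the correct quantity fixes the proof.
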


\begin{proof}
  Suppose that $T^{\ast}$ is the maximal time of existence of the solution. Then $E_s^1(t)$ blows up as $t \uparrow T^{\ast}$. If $N (t)$ would remain bounded when $t \uparrow T^{\ast}$, then by Proposition \ref{a priori opt} and
  the assumption of Theorem \ref{thm:main result wellposedness}, $E_s^1(t)$ would remain bounded when $t \uparrow T^{\ast}$ which is a contradiction. Thus,
  \begin{equation*}
      \lim_{t \uparrow T^{\ast}} N (t)
     = + \infty . 
  \end{equation*} 
  On the other hand, $N
  (t)$ remains bounded when $t < T^{\ast}$. It follows that
  \begin{equation*} \lim_{t \uparrow T^{\ast}} M (t)
     = + \infty . \end{equation*}
  For the second part we are going to use a
  bootstrap argument. If for all times
  \begin{equation}
    \|u^{\varepsilon} (t) \|_{W^{1, \infty}} + \|a^{\varepsilon} (\cdot, t) \|_{L^{\infty}} \leq K, \label{contradict cond}
  \end{equation}
  for some constant $K$ to be assigned we also assume that
  \begin{equation}
    \|a^{\varepsilon} (t) \|_{H^1} + \|a^{\varepsilon} (t) \|_{W^{1, \infty}}
    \leq \frac{K}{ \varepsilon}, \label{first est}
  \end{equation}
  for all times. Here we also require that
  \begin{equation}
    \|a^{\varepsilon,0} \|_{H^1} + \|a^{\varepsilon,0} \|_{W^{1, \infty}} 
    \leq
    \frac{K}{ \varepsilon} . \label{id boot cond}
  \end{equation}
  It follows that for all $t \in [0, T^{\ast})$,
  \begin{equation*}  N(t)
     \leq 2 K + 1. \end{equation*}
  Since $s > 7/2$, by Sobolev's inequality,
  \begin{equation*}
    \|a^{\varepsilon} (t) \|_{H^1} + \|a^{\varepsilon} (t) \|_{W^{1, \infty}} 
    \lesssim \|a\|_{H^s}.
  \end{equation*} 
  Due to the a priori estimate \eqref{energy a priori},
  \begin{align}
    \|a^{\varepsilon} (t) \|_{H^1} + \|a^{\varepsilon} (t) \|_{W^{1, \infty}} 
    &\lesssim  E_s^1 (t) \nonumber \\
    &\lesssim Q N (t)^{2 s + 3}  e^{C N
     (t)^{2 s + 3} t}\nonumber \\
    &\lesssim Q (2 K + 1)^{2 s + 3}  e^{C (2 K + 1)^{2 s + 3} T^{\ast}} .\label{second est}
  \end{align}
   If we have
  \begin{equation}
    CQ (2 K + 1)^{2 s + 3}  e^{C (2 K + 1)^{2 s + 3} T^{\ast}} <
    \frac{K}{\varepsilon}, \label{boot cond}
  \end{equation}
  then \eqref{second est} is a stronger estimate than \eqref{first est}. By
  a bootstrap argument, \eqref{second est} holds
  for all times. Then \eqref{contradict cond} implies that $N (t)$ is bounded for all times, which
  contradicts the fact that $T^{\ast}$ is  maximal. Thus we only need to show that when $\varepsilon$ is small enough, there
  exists $K = K (\varepsilon, Q, T^{\ast})$ such that \eqref{boot cond} holds
  true, where we also require that $K > C Q$. Take
 \begin{equation*} K = \frac{1}{2}\left( \left| \log \frac{\sqrt{\varepsilon}}{C T^{\ast}}
     \right|^{1/(2 s + 3)} - 1\right),
     \end{equation*}
     which goes to infinity as $\varepsilon
\rightarrow 0$.
 Then \eqref{boot cond} becomes 
 \begin{equation*}
  C Q \sqrt{\varepsilon} \log
    \left|\frac{\sqrt{\varepsilon}}{C T^{\ast}}\right|  < \frac{1}{2\varepsilon}\left(  \left|\log \frac{\sqrt{\varepsilon}}{C T^{\ast}}\right|
     ^{1/(2 s + 3)} - 1\right),
\end{equation*} 
which holds for $\varepsilon$ is small enough. This finishes the proof.
\end{proof}

\section{Semiclassical limit}\label{section semiclassical limit}

\begin{proof}[Proof of Theorem \ref{thm:main result semiclassical limit}]
Again, we only concentrate on the Poisswell equations. We split the proof into three parts. In the first part we estimate the difference between $(a^{\varepsilon},u^{\varepsilon})$  and $(a,u)$. In the second part we show the semiclassical limit of the density $\rho^{\varepsilon}$, the current density $J^{\varepsilon}$ and the monokinetic Wigner transform $f^{\varepsilon}$. In the third part we prove the statement about the time of existence. In the following all the constants depend implicitly on $Q,T$.\\
  
For the first part let $(a^{\varepsilon}, u^{\varepsilon})$ be the solution of \eqref{Madelung-Poisson}-\eqref{wkb data} and $(a, u)$  the solution of  \eqref{Euler-Poisson}-\eqref{Euler Poisswell limit data}. Set
  \begin{align} \delta a &:= a^{\varepsilon} - a, & \delta u &:= u^{\varepsilon} -
     u, & \delta A &:= A^{\varepsilon} - A, \\ 
     \delta V &:= V^{\varepsilon}
     - V, & \delta {B} &:= {B}^{\varepsilon} - {B}, & \delta
     \rho &:= \rho^{\varepsilon} - \rho .
     \end{align}
   Subtracting the equations yields 
  \begin{align}
  \label{sub system}
      \partial_t \delta a + (\delta u - \delta A) \cdot \nabla
      a^{\varepsilon} + (u-A) \cdot \nabla \delta a & \nonumber\\
      + \frac{1}{2} a^{\varepsilon} \mathrm{div} (\delta u - \delta A) +
      \frac{1}{2} \delta a \mathrm{div} (u-A) & = \frac{i}{2} (\sigma \cdot \delta
      {B})a^{\varepsilon} + \frac{i}{2} (\sigma \cdot {B}) \delta a
      + \frac{i \varepsilon}{2} \Delta a^{\varepsilon},\\
      \partial_t \delta u + (\delta u - \delta A) \cdot \nabla u^{\varepsilon}
      + (u-A) \cdot \nabla \delta u + \delta u \cdot \nabla
      A^{\varepsilon} & \nonumber \\
      + u^{\varepsilon} \cdot \nabla \delta A + \nabla (\frac{1}{2} \delta A
      \cdot (A + A^{\varepsilon}) + \delta V) & = 0,\\
       - \Delta \delta V & = \delta \rho,  
    \\
    - \Delta \delta A & = \varepsilon w^{\varepsilon}
    + \varepsilon v^{\varepsilon}  + (\delta \rho) u^{\varepsilon} \nonumber \\ &\qquad - \rho \delta
    u - (\delta \rho) A + \rho^{\varepsilon} \delta A.  \label{A in sub ori}\\
      (\delta a,\delta u)  (x,0) & = (a^{\varepsilon, 0} - a^0,u^{\varepsilon, 0} - u^0).
  \end{align}
  Apply $D^{s-3}$ to \eqref{sub system},
  \begin{equation*}
    \partial_t D^{s-3} \delta a + (u-A) \cdot \nabla
    D^{s-3} \delta a + \mathbf{R}_f = 0, \label{delta a eq}
  \end{equation*}
  with
  \begin{align*}
    \mathbf{R}_f &:= D^{s-3}  ((\delta u - \delta A) \cdot \nabla
    a^{\varepsilon}) + D^{s-3}  ((u-A) \cdot \nabla \delta a)
    - (u-A) \cdot \nabla D^{s-3} \delta a\\
    &+ \frac{1}{2} D^{s-3} (a^{\varepsilon} \mathrm{div} (\delta
    u - \delta A)) + \frac{1}{2} D^{s-3}  (\delta a \mathrm{div} (u - A))\\
    &- \frac{i}{2} D^{s-3} (\sigma \cdot \delta {B})
    a^{\varepsilon}) + \frac{i}{2} D^{s-3}  ((\sigma \cdot {B} )\delta
    a) - \frac{i \varepsilon}{2} \Delta D^{s-3} a^{\varepsilon}
    . \\
    &= F_1 + F_2 + F_3+ F_4+ F_5+ F_6+ F_7+ F_8
  \end{align*}
  Due to Lemma \ref{charge cons with source}, we
  obtain
  \begin{equation} \partial_t \| D^{s-3} \delta a \|_{L^2}^2 = \int
     \mathrm{div} (u-A)  | D^{s-3} \delta a |^2 \dd x - \int
     \Re \langle D^{s-3} {\delta a},\mathbf{R}_f  \rangle. \end{equation}
     The first term on the RHS can be estimated by $\| \delta a\|_{H^{s - 3}}^2$. Next, we investigate the second term:
     
     \emph{Step 1: Estimate for $\mathbf{R}_f$.}
  Together with the Kato-Ponce inequality we obtain the following
  estimates. For $s-3>0$,
  \begin{align*}
    \|F_1\|_{L^2} 
    &\lesssim \| \delta u - \delta A\|_{\dot{H}^{s-3}} \| \nabla a^{\varepsilon} \|_{L^{\infty}} +
    \|a^{\varepsilon} \|_{H^{s - 2}}  \| \delta u -\delta A\|_{L^{\infty}}\\
    &\lesssim \| \delta u - \delta A\|_{\dot{H}^{s-3}} + \| \delta u
    - \delta A\|_{L^{\infty}}\\
    &\lesssim \| \nabla \delta A\|_{H^{s - 3}} + \| \delta u\|_{H^s},
  \end{align*}
  and
  \begin{align*}
     \|F_2+F_3\|_{L^2} 
    &\lesssim \|u-A\|_{\dot{H}^{s-3}} \| \nabla \delta a \|_{L^{\infty}} + \|
    \delta a \|_{H^{s - 3}}  \|u-A\|_{W^{1, \infty}}\\
    &\lesssim \| \delta a \|_{H^{s - 3}} .
  \end{align*}
  In the $L^2$ case, i.e. for $s-3 = 0$,
  \begin{align*}
     \|F_1\|_{L^2} 
     & \lesssim \|a^{\varepsilon} \|_{H^1}  \|
    \delta u - \delta A\|_{L^{\infty}}  \lesssim \| \nabla \delta A\|_{H^{s - 3}} + \| \delta u\|_{H^s},
  \end{align*}
  and $F_2+F_3=0$.
  For $F_4$ we have,
  \begin{align*}
     \|F_4\|_{L^2} \lesssim \| D^{s-3} (a^{\varepsilon} \mathrm{div} (\delta u - \delta
    A))\|_{L^2} & \lesssim \| \delta u - \delta A\|_{\dot{H}^{|\alpha|}} \|
    \nabla a^{\varepsilon} \|_{L^{\infty}} + \|a^{\varepsilon} \|_{H^{s -
    1}}  \| \delta A + \delta u\|_{W^{1, \infty}}\\
    & \lesssim \| \nabla \delta A\|_{H^{s - 3}} + \| \delta u\|_{H^s}
  \end{align*}
  For $F_5$ we have
  \begin{align*}
     \|F_5\|_{L^2} \lesssim \| D^{s-3} (\delta a \mathrm{div} (u-A))\|_{L^2} & \lesssim
    \|u-A\|_{\dot{H}^{|\alpha|}} \| \nabla \delta a \|_{L^{\infty}} + \|
    \delta a \|_{H^{s - 3}}  \|u-A\|_{W^{1, \infty}}\\
    & \lesssim \| \delta a \|_{H^{s - 3}}.
  \end{align*}
  For $F_6$ we have
  \begin{align*}
    \|F_6\|_{L^2} 
    & \lesssim \| \nabla \delta A\|_{H^{s - 3}} \|a^{\varepsilon}
    \|_{L^{\infty}} + \|a^{\varepsilon} \|_{H^{s - 3}}  \| \nabla \delta
    A\|_{W^{1, \infty}}\\
    & \lesssim \| \nabla \delta A\|_{H^{s - 3}}.
  \end{align*}
  For $F_7$ we have
  \begin{align*}
    \|F_7\|_{L^2} \lesssim 
    & \lesssim \|
    \nabla A \|_{H^{s - 3}} \| \delta a\|_{L^{\infty}} + \| \delta a\|_{H^{s
    - 3}} \| \nabla A \|_{L^{ \infty}}\\
    & \lesssim \| \delta a\|_{H^{s - 3}},
  \end{align*}
  And for $F_8$, by Proposition \ref{existence}, we have
  \begin{equation*} \|F_8\|_{L^2} \lesssim \varepsilon \| \Delta D^{s-3} a^{\varepsilon} \|_{L^2} \lesssim
     \varepsilon \|a^{\varepsilon} \|_{H^{s - 1}} \\
     \lesssim \varepsilon. 
    \end{equation*}
  Similarly to \eqref{Rc est},
  \begin{equation*} \|\mathbf{R}_f \|_{L^2} \lesssim\| \nabla \delta A\|_{H^{s - 3}} + \| \delta
     u\|_{H^{s - 2}} + \| \delta a\|_{H^{s - 3}} + \varepsilon . \end{equation*}
     
     \emph{Step 2: Estimates for $\|\delta u\|_{H^{s-2}}$ and $\|\delta a\|_{H^{s-3}}$}.
  Similar to \eqref{a a priori estimate}, it follows that
  \begin{equation}
    \partial_t \| \delta a\|_{H^{s - 3}} \lesssim\| \delta a\|_{H^{s - 3}}
    + \| \delta u\|_{H^{s - 2}} + \| \nabla \delta A\|_{H^{s - 3}} +
    \varepsilon . \label{delta a est}
  \end{equation}
  Similarly for $\delta u$,
  \begin{equation}
    \partial_t  \| \delta u\|_{H^{s - 2}} \lesssim \| \delta u\|_{H^{s - 2}}
    + \| \nabla \delta A\|_{H^{s - 3}} + \| \nabla \delta V\|_{H^{s - 1}} .
    \label{delta u est}
  \end{equation}
  
\emph{Step 3: Estimates for $\delta V$ and $\delta A$.}
  For $\delta V$, similarly to \eqref{nabla V Hs bound}, we have
  \begin{equation}
    \| \nabla \delta V\|_{H^{s - 1}} \lesssim \| \delta a\|_{H^{s - 3}} .
    \label{delta V est}
  \end{equation}
  For $\delta A$, similarly to \eqref{VAL2} and \eqref{VLA3},  we have
  \begin{align*}
    \| \nabla \delta A\|_{L^2} & \lesssim \| \delta u\|_{L^2} + \| \delta
    a\|_{L^2} + \varepsilon,\\
    \| \nabla \delta A\|_{H^{s - 3}} & \lesssim \varepsilon + \| \delta
    u\|_{H^{s - 3}} + \| \delta a\|_{H^{s - 3}} + \| \nabla \delta A\|_{L^2} .
  \end{align*}
  It follows that
  \begin{equation}
    \| \nabla \delta A\|_{H^{s - 3}} \lesssim\varepsilon + \| \delta
    u\|_{H^{s - 3}} + \| \delta a\|_{H^{s - 3}} . \label{delta A est}
  \end{equation}
  Combining \eqref{delta a est}, \eqref{delta u est}, \eqref{delta V est} and
  \eqref{delta A est} yields
  \begin{equation*} \partial_t  (\| \delta a\|_{H^{s - 3}} +\| \delta u\|_{H^{s - 2}}) \lesssim \| \delta a\|_{H^{s - 3}} + \| \delta u\|_{H^{s - 2}} + \varepsilon . \end{equation*}
  By Gronwall's inequality
  \begin{equation*} \| \delta a\|_{H^{s - 3}} + \| \delta u\|_{H^{s - 2}} \lesssim\| a^{\varepsilon, 0} - a^0\|_{H^{s - 3}} + \| u^{\varepsilon, 0} - u^0 \|_{H^{s - 2}} + \varepsilon . \end{equation*}
 Then the semiclassical limit \eqref{semiclassical limit eq} immediately follows from the assumptions of Theorem \ref{thm:main result wellposedness}. This finishes the proof
  of the first part of Theorem \ref{thm:main result semiclassical limit}.
  
  Now we turn to the second part of Theorem \ref{thm:main result semiclassical limit},
  i.e. the semiclassical limit of the density $\rho^{\varepsilon}$, the
  current $J^{\varepsilon}$ and the Wigner transform $f^{\varepsilon}$. Recall
  that
  \begin{align*}
    \rho^{\varepsilon} & =  |a^{\varepsilon} |^2,\\
    J^{\varepsilon} & =  \varepsilon w^{\varepsilon} + \varepsilon
    v^{\varepsilon} +\rho^{\varepsilon}  (u^{\varepsilon} - A^{\varepsilon})
    .
  \end{align*}
  By the uniform boundedness of $(a^{\varepsilon},u^{\varepsilon})$ in $X^s$ we can pass to the limit $\varepsilon
  \rightarrow 0$,
  \begin{align*}
    \rho^{\varepsilon} & \underset{\varepsilon \rightarrow 0}{\longrightarrow}
     \rho = |a|^2,\\
     J^{\varepsilon} 
    &\underset{\varepsilon \rightarrow 0}{\longrightarrow}  J = \rho (u -
    A),
  \end{align*}
  in $H^{s - 3}$.  Moreover, there exists a subsequence
  $f^{\varepsilon}$ and a non-negative Radon measure $f$ such that
  \begin{equation*} f^{\varepsilon} \underset{\varepsilon \rightarrow 0}{\rightharpoonup} f
     \quad \text{in } \mathcal{A}', \end{equation*}
     cf. {\cite{GMMP}}. It is easy to see that
  \begin{equation*} \| \varepsilon \nabla \psi^{\varepsilon} \|_{L^2} \leq \|(\varepsilon \nabla -iA^{\varepsilon})\psi^{\varepsilon}\|_{L^2} + \|A^{\varepsilon}\psi^{\varepsilon}\|_{L^2}\leq C, \end{equation*}
  for some constant $C$ independent of $\varepsilon$. Following
  {\cite{LionsPaul}}, it holds that
  \begin{equation*} \int_{\mathbb{R}^3} f (x, \xi, t) \dd \xi = \lim_{\varepsilon \rightarrow
     0} \rho^{\varepsilon} (x,t) = \rho (x,t) . \end{equation*}
  It is easy to see that
  \begin{equation*} \|(u (x, t) - i \varepsilon\nabla \psi^{\varepsilon} \|_{L^2}
     \underset{\varepsilon \rightarrow 0}{\longrightarrow} 0. \end{equation*}
  Following {\cite{2003Wigner}}, it holds that
  \begin{equation*} \int |\xi - (u (x,t)|^2 f (x, \xi, t) \dd \xi \dd x = 0, \end{equation*}
  and
  \begin{equation*} f (x, \xi, t) = \rho (x,t) \delta (\xi - u (x, t)) . \end{equation*}
  It is easy to see that $f$ satisfies \eqref{eq:vlasov_limit} and
  \eqref{eq:vlasov_initial_data} with the change of variables $p = \xi-A(x,t)$.
  
  The last part of Theorem \ref{thm:main result semiclassical limit} is proved by a bootstrap argument. For
  any $T < T^0$, there exists $K = K (T) > 0$, such that for all $t\in [0,T)$,
  \begin{equation*} \| (a, u) (t) \|_{X^s} + N (a, u) (t) \leq K. \end{equation*}
  Suppose that
  \begin{equation} \label{eq:3K est} E_s^1 (a^{\varepsilon}, u^{\varepsilon}) (t) \leq
     K', \end{equation}
   for all $t\in [0,T)$ with $K'$ to be determined. By virtue of the first part of Theorem \ref{thm:main result semiclassical limit}, it follows that for all $t\in [0,T)$,
  \begin{equation*}  \| (a, u) (t) - (a^{\varepsilon}, u^{\varepsilon}) (t)
     \|_{X^{s - 2}} \lesssim_{K, K', T} \varepsilon . \end{equation*}
  Using Sobolev' inequality we obtain
  \begin{equation*} \|u - u^{\varepsilon} \|_{W^{1, \infty}} + \|a - a^{\varepsilon} \|_{H^1}
     + \|a - a^{\varepsilon} \|_{W^{1, \infty}} 
     \lesssim_{K, Q, T} \varepsilon . \end{equation*}
  When $\varepsilon$ is small enough, this estimate yields for all $t\in [0,T)$,
  \begin{equation*}  \| N (a^{\varepsilon}, u^{\varepsilon}) (t) \|_{X^s}
     \leq 2 K, \end{equation*}
  By Proposition \ref{a priori opt} we
  know that for all $t\in [0,T)$,
  \begin{equation*}
     E_s^1 (a^{\varepsilon}, u^{\varepsilon}) (t) \leq CQ (2
    K)^{2 s + 3} e^{C (2 K)^{2 s + 3} T} .
  \end{equation*}
  If we take $K'$ larger than $CQ (2 K)^{2 s + 3} e^{C (2 K)^{2 s + 3} T}$
  then the above estimate is sharper than \eqref{eq:3K est}. By a bootstrap
  argument,  \eqref{eq:3K est} holds and $(a^{\varepsilon},
  u^{\varepsilon})$ does not blow up at $T$. It follows that for all $T< T^0$,
  \begin{equation*} \liminf_{t \rightarrow T^{\varepsilon} -}
     T^{\varepsilon} \geq T. \end{equation*}
     This completes the proof.
\end{proof}

\appendix

\section{Wigner transform and Wigner equation}
\label{sec:Wigner}

In the phase-space formulation of quantum mechanics the {Wigner matrix} $F^{\varepsilon}$ \cite{GMMP} of the 2-spinor $\psi^{\varepsilon}$ is defined as
\begin{equation}
    F^{\varepsilon}(x,\xi,t) := \frac{1}{(2\pi)^{3}} \int_{\mathbb{R}^3} e^{-i\xi \cdot y} \psi^{\varepsilon}(x+\frac{\varepsilon y}{2},t) \overline{\psi^{\varepsilon}(x-\frac{\varepsilon y}{2},t)}^T \dd y,
    \label{eq:wigner_matrix}
\end{equation}
The scalar Wigner transform $f^{\varepsilon}$ of $\psi^{\varepsilon}$ is then given by
\begin{equation}
    f^{\varepsilon} = \Tr F^{\varepsilon}, \label{eq:WT_psi}
\end{equation}
where $\Tr F^{\varepsilon}$ denotes the trace of the $2\times 2$ matrix $F^{\varepsilon}$. The moments of the Wigner transform correspond to the macroscopic densities
\begin{align}
    \rho^{\varepsilon}(x,t) = \int_{\mathbb{R}^3_{\xi}} f^{\varepsilon}(x,\xi,t) \dd \xi,
   &&  J_k^{\varepsilon}(x,t) = \int_{\mathbb{R}^3_{\xi}} \Tr(\sigma \cdot(\xi-A^{\varepsilon}(x))\sigma_k F^{\varepsilon}(x,\xi,t)) \dd \xi.
\label{eq:current_wigner}
\end{align}
However $f^{\varepsilon}$ attains negative values in general which corresponds to the uncertainty
principle for the conjugate variables position and momentum. 

If $\psi^{\varepsilon}$ is a bounded family in $(L^2(\mathbb{R}^3))^2$ then $f^{\varepsilon}$ has a weak limit $f$ in $\mathcal{A}'$ where 
\begin{equation}
    \label{algebra A}
    \mathcal{A} := \{\phi \in C_0(\mathbb{R}^3_x \times \mathbb{R}^3_{\xi}) \colon \mathcal{F}_{\xi}[\phi](x,\eta) \in L^1(\mathbb{R}^3_{\eta}, C_0(\mathbb{R}^3_x ))\}.
\end{equation}
This space is specifically tailored to the Wigner transform and was introduced in \cite{LionsPaul}. 
The limit $f$ is a a non-negative Radon measure, called \emph{Wigner measure} (related to semiclassical measures \cite{gerard1991mesures}) and can be interpreted as a classical phase-space density obeying the Vlasov-Poisswell equation \eqref{eq:vlasov_limit}-\eqref{eq:data_limit} and Vlasov-Darwin equation \eqref{eq:vlasov_darwin_limit}-\eqref{eq:data_limit Darwin}, respectively. 
The scalar Wigner transform $f^{\varepsilon}$ obeys the "quantum Vlasov-Poisswell/Darwin equation"
\begin{align}
       \partial_t f^{\varepsilon} + \xi \cdot \nabla_x f^{\varepsilon} +\tau[A^{\varepsilon}]\nabla_x f^{\varepsilon} +  \xi \theta[A^{\varepsilon}]f^{\varepsilon}
    + \frac{1}{2}\theta[(A^{\varepsilon})^2]f^{\varepsilon}& \nonumber \\ - \frac{\varepsilon}{2}\theta[ B_k^{\varepsilon}]\Tr(\sigma_k F^{\varepsilon}) +\theta[V^{\varepsilon}]f^{\varepsilon}&= 0, \label{eq:pauli_wigner}\\
-\Delta V^{\varepsilon} &= \rho^{\varepsilon}, \\
-\Delta A^{\varepsilon} &= \mathbb{P}J^{\varepsilon} \\
f^{\varepsilon}(x,\xi,0) &= f^{\varepsilon,0}(x,\xi) \\ &= \Tr(F^{\varepsilon,0}(x,\xi)),  \label{eq:wigner matrix initial}
\end{align}
where $\theta[\cdot]$ and $\tau[\cdot]$ are the pseudo-differential operators defined by
\begin{equation}
    (\theta[g]f^{\varepsilon})(x,\xi) := -\frac{1}{(2\pi)^3}\int_{\mathbb{R}^6} \frac{1}{i\varepsilon}(g(x+\frac{\varepsilon y}{2})-g(x-\frac{\varepsilon y}{2}))f^{\varepsilon}(x,\eta) e^{-i (\xi-\eta)\cdot y} \dd \eta \dd y, 
    \label{eq:PDO}
\end{equation}
and
\begin{equation}
    (\tau[g]f^{\varepsilon})(x,\xi)  := -\frac{1}{(2\pi)^3}\int_{\mathbb{R}^6} \frac{1}{2}(g(x+\frac{\varepsilon y}{2})+g(x-\frac{\varepsilon y}{2}))f^{\varepsilon}(x,\eta) e^{-i (\xi-\eta)\cdot y} \dd \eta \dd y. 
\end{equation}
The initial datum is given by
\begin{align}
    F^{\varepsilon,0}(x,\xi) = \frac{1}{(2\pi)^{3}} \int_{\mathbb{R}^3} e^{-i\xi\cdot y} \psi^{\varepsilon,0}(x+\frac{\varepsilon y}{2}) \overline{\psi^{\varepsilon,0} (x-\frac{\varepsilon y}{2})}^T \dd y.
\end{align}
Formally passing to the limit $\varepsilon \rightarrow 0$ in \eqref{eq:pauli_wigner}-\eqref{eq:wigner matrix initial} and taking the trace yields the \textbf{Vlasov-Poisswell equation} \eqref{eq:vlasov_limit}-\eqref{eq:data_limit} and the \textbf{Vlasov-Darwin equation} \eqref{eq:vlasov_darwin_limit}-\eqref{eq:data_limit Darwin}.

Taking $\psi^{\varepsilon,0}$ of the form \eqref{WKB initial} and assuming that $(\rho^{\varepsilon,0},u^{\varepsilon,0})$ converges to $(\rho^0,u^0)$ yields the \emph{monokinetic} Wigner measure
\begin{equation*}
    f^0(x, p) = \rho^{0}(x) \delta(p-u^0(x)).
\end{equation*}
If we use this as initial data in \eqref{eq:data_limit} and \eqref{eq:data_limit Darwin} we can expect that for short times, $f(x, p, t)$ is of the form
\begin{equation*}
    f(x, p, t) = \rho(x,t) \delta(p-u(x,t)).
\end{equation*}

\section*{Acknowledgement}

N.M. and J.M. acknowledge financial support from the Austrian Science Fund (FWF) via the SFB project F65 and the DK project W1245, as well as the Vienna Science and Technology Fund (WWTF) via project MA16-066 "SEQUEX".

J.M acknowledges financial support from the Austrian Science Fund (FWF) via the Schrödinger grant 10.55776/J4840, from Campus France via a "Bourse d'Excellence" scholarship and from Wolfgang Pauli Insitute Vienna for a Pauli scholarship. Also, J.M. thanks Thomas Y. Hou for his hospitality and support at Caltech.

C.Y. is supported by the NSF Grant DMS-2205590. C.Y. acknowledges the hospitality and support of the Wolfgang Pauli Institute Vienna. Further C.Y. expresses his gratitude towards Zhennan Zhou for continuous support.

\bibliographystyle{abbrv}
\bibliography{pauliwkb}

@article {bauer2005,
    AUTHOR = {Bauer, Sebastian and Kunze, Markus},
     TITLE = {The {D}arwin approximation of the relativistic
              {V}lasov-{M}axwell system},
   JOURNAL = {Ann. Henri Poincar\'e},
  FJOURNAL = {Annales Henri Poincar\'e. A Journal of Theoretical and
              Mathematical Physics},
    VOLUME = {6},
      YEAR = {2005},
    NUMBER = {2},
     PAGES = {283--308},
      ISSN = {1424-0637,1424-0661},
   MRCLASS = {82D10 (35A35 35Q75)},
  MRNUMBER = {2136192},
MRREVIEWER = {Simone\ Calogero},
       DOI = {10.1007/s00023-005-0207-y},
       URL = {https://doi.org/10.1007/s00023-005-0207-y},
}

@article {besse2007,
    AUTHOR = {Besse, Nicolas and Mauser, Norbert J. and Sonnendr\"ucker,
              Eric},
     TITLE = {Numerical approximation of self-consistent {V}lasov models for
              low-frequency electromagnetic phenomena},
   JOURNAL = {Int. J. Appl. Math. Comput. Sci.},
  FJOURNAL = {International Journal of Applied Mathematics and Computer
              Science},
    VOLUME = {17},
      YEAR = {2007},
    NUMBER = {3},
     PAGES = {361--374},
      ISSN = {1641-876X,2083-8492},
   MRCLASS = {78A25 (65M06)},
  MRNUMBER = {2356895},
       DOI = {10.2478/v10006-007-0030-3},
       URL = {https://doi.org/10.2478/v10006-007-0030-3},
}

@article {carles2007,
    AUTHOR = {Alazard, Thomas and Carles, R\'emi},
     TITLE = {Semi-classical limit of {S}chr\"odinger-{P}oisson equations in
              space dimension {$n\geq 3$}},
   JOURNAL = {J. Differential Equations},
  FJOURNAL = {Journal of Differential Equations},
    VOLUME = {233},
      YEAR = {2007},
    NUMBER = {1},
     PAGES = {241--275},
      ISSN = {0022-0396,1090-2732},
   MRCLASS = {35Q40 (35B25 35B40 35C20 35J60 81Q05 81Q20)},
  MRNUMBER = {2290279},
MRREVIEWER = {Mathieu\ Lewin},
       DOI = {10.1016/j.jde.2006.10.003},
       URL = {https://doi.org/10.1016/j.jde.2006.10.003},
}

@article {carles2012,
    AUTHOR = {Carles, R\'emi and Danchin, Rapha\"el and Saut, Jean-Claude},
     TITLE = {Madelung, {G}ross-{P}itaevskii and {K}orteweg},
   JOURNAL = {Nonlinearity},
  FJOURNAL = {Nonlinearity},
    VOLUME = {25},
      YEAR = {2012},
    NUMBER = {10},
     PAGES = {2843--2873},
      ISSN = {0951-7715,1361-6544},
   MRCLASS = {35Q55 (35-02 76Y05)},
  MRNUMBER = {2979973},
       DOI = {10.1088/0951-7715/25/10/2843},
       URL = {https://doi.org/10.1088/0951-7715/25/10/2843},
}

@article {Carl07,
    AUTHOR = {Carles, R\'emi},
     TITLE = {W{KB} analysis for nonlinear {S}chr\"odinger equations with
              potential},
   JOURNAL = {Comm. Math. Phys.},
  FJOURNAL = {Communications in Mathematical Physics},
    VOLUME = {269},
      YEAR = {2007},
    NUMBER = {1},
     PAGES = {195--221},
      ISSN = {0010-3616,1432-0916},
   MRCLASS = {35Q55 (35B25 76N10 81Q20)},
  MRNUMBER = {2274468},
MRREVIEWER = {Satyanad\ Kichenassamy},
       DOI = {10.1007/s00220-006-0077-2},
       URL = {https://doi.org/10.1007/s00220-006-0077-2},
}

@book{erdougan2016dispersive,
  title={Dispersive partial differential equations: {W}ellposedness and applications},
  author={Erdo{\u{g}}an, M Burak and Tzirakis, Nikolaos},
  volume={86},
  year={2016},
  publisher={Cambridge University Press}
}

@article {erdHos1997semiclassical,
    AUTHOR = {Erd\H{o}s, L\'aszl\'o{} and Solovej, Jan Philip},
     TITLE = {Semiclassical eigenvalue estimates for the {P}auli operator
              with strong non-homogeneous magnetic fields. {II}. {L}eading
              order asymptotic estimates},
   JOURNAL = {Comm. Math. Phys.},
  FJOURNAL = {Communications in Mathematical Physics},
    VOLUME = {188},
      YEAR = {1997},
    NUMBER = {3},
     PAGES = {599--656},
      ISSN = {0010-3616,1432-0916},
   MRCLASS = {81Q10 (47A10 47F05 47N50 81V45)},
  MRNUMBER = {1473314},
MRREVIEWER = {G.\ V.\ Rozenblum},
       DOI = {10.1007/s002200050181},
       URL = {https://doi.org/10.1007/s002200050181},
}

@article {GMMP,
    AUTHOR = {G\'erard, Patrick and Markowich, Peter A. and Mauser, Norbert
              J. and Poupaud, Fr\'ed\'eric},
     TITLE = {Homogenization limits and {W}igner transforms},
   JOURNAL = {Comm. Pure Appl. Math.},
  FJOURNAL = {Communications on Pure and Applied Mathematics},
    VOLUME = {50},
      YEAR = {1997},
    NUMBER = {4},
     PAGES = {323--379},
      ISSN = {0010-3640,1097-0312},
   MRCLASS = {35B27 (35L99 35Q40 35S05)},
  MRNUMBER = {1438151},
MRREVIEWER = {Albert\ J.\ Milani},
       DOI = {10.1002/(sici)1097-0312(199704)50:4<323::aid-cpa4>3.0.co;2-c},
       URL =
              {https://doi.org/10.1002/(sici)1097-0312(199704)50:4<323::aid-cpa4>3.0.co;2-c},
}

@incollection {gerard1991mesures,
    AUTHOR = {G\'erard, P.},
     TITLE = {Mesures semi-classiques et ondes de {B}loch},
 BOOKTITLE = {S\'eminaire sur les \'Equations aux {D}\'eriv\'ees
              {P}artielles, 1990--1991},
     PAGES = {19},
 PUBLISHER = {\'Ecole Polytech., Palaiseau},
      YEAR = {1991},
      ISBN = {2-7302-0229-3},
   MRCLASS = {35J10 (35B20 35B27 81Q20)},
  MRNUMBER = {1131589},
MRREVIEWER = {Dietrich\ G\"ohde},
}

@article {germain2014,
    AUTHOR = {Germain, Pierre and Masmoudi, Nader},
     TITLE = {Global existence for the {E}uler-{M}axwell system},
   JOURNAL = {Ann. Sci. \'Ec. Norm. Sup\'er. (4)},
  FJOURNAL = {Annales Scientifiques de l'\'Ecole Normale Sup\'erieure.
              Quatri\`eme S\'erie},
    VOLUME = {47},
      YEAR = {2014},
    NUMBER = {3},
     PAGES = {469--503},
      ISSN = {0012-9593,1873-2151},
   MRCLASS = {35Q83 (35A01 35Q35 35Q84 78A20)},
  MRNUMBER = {3239096},
MRREVIEWER = {Juhi\ Jang},
       DOI = {10.24033/asens.2219},
       URL = {https://doi.org/10.24033/asens.2219},
}

@article{GeMM23,
  title={Local {W}ellposedness and {G}lobal {W}eak {S}olutions of the {P}auli-{D}arwin/{P}oisswell {E}quations},
  author={Germain, Pierre and Mauser, Norbert J and M{\"o}ller, Jakob},
  journal={arXiv preprint arXiv:2506.22333},
  year={2025}
}

@article {golse2022,
    AUTHOR = {Golse, François and Paul, Thierry},
     TITLE = {Mean-field and classical limit for the {$N$}-body quantum
              dynamics with {C}oulomb interaction},
   JOURNAL = {Comm. Pure Appl. Math.},
  FJOURNAL = {Communications on Pure and Applied Mathematics},
    VOLUME = {75},
      YEAR = {2022},
    NUMBER = {6},
     PAGES = {1332--1376},
      ISSN = {0010-3640,1097-0312},
   MRCLASS = {81V70 (70F10)},
  MRNUMBER = {4415778},
MRREVIEWER = {Eric\ Stachura},
       DOI = {10.1002/cpa.21986},
       URL = {https://doi.org/10.1002/cpa.21986},
}

@article {10.2307/118716,
    AUTHOR = {Grenier, E.},
     TITLE = {Semiclassical limit of the nonlinear {S}chr\"odinger equation
              in small time},
   JOURNAL = {Proc. Amer. Math. Soc.},
  FJOURNAL = {Proceedings of the American Mathematical Society},
    VOLUME = {126},
      YEAR = {1998},
    NUMBER = {2},
     PAGES = {523--530},
      ISSN = {0002-9939,1088-6826},
   MRCLASS = {35Q55},
  MRNUMBER = {1425123},
MRREVIEWER = {Nakao\ Hayashi},
       DOI = {10.1090/S0002-9939-98-04164-1},
       URL = {https://doi.org/10.1090/S0002-9939-98-04164-1},
}

@article {gui2022,
    AUTHOR = {Gui, Guilong and Zhang, Ping},
     TITLE = {Semiclassical limit of {G}ross-{P}itaevskii equation with
              {D}irichlet boundary condition},
   JOURNAL = {SIAM J. Math. Anal.},
  FJOURNAL = {SIAM Journal on Mathematical Analysis},
    VOLUME = {54},
      YEAR = {2022},
    NUMBER = {1},
     PAGES = {1053--1104},
      ISSN = {0036-1410,1095-7154},
   MRCLASS = {35Q40 (35Q55)},
  MRNUMBER = {4379626},
MRREVIEWER = {Robert\ Schippa},
       DOI = {10.1137/20M1369749},
       URL = {https://doi.org/10.1137/20M1369749},
}

@article {guo1998,
    AUTHOR = {Guo, Yan},
     TITLE = {Smooth irrotational flows in the large to the
              {E}uler-{P}oisson system in {$\mathbf{R}^{3+1}$}},
   JOURNAL = {Comm. Math. Phys.},
  FJOURNAL = {Communications in Mathematical Physics},
    VOLUME = {195},
      YEAR = {1998},
    NUMBER = {2},
     PAGES = {249--265},
      ISSN = {0010-3616,1432-0916},
   MRCLASS = {35Q35 (76X05)},
  MRNUMBER = {1637856},
MRREVIEWER = {Woodford\ W.\ Zachary},
       DOI = {10.1007/s002200050388},
       URL = {https://doi.org/10.1007/s002200050388},
}

@article {guo2016,
    AUTHOR = {Guo, Yan and Ionescu, Alexandru D. and Pausader, Benoit},
     TITLE = {Global solutions of the {E}uler-{M}axwell two-fluid system in
              3{D}},
   JOURNAL = {Ann. of Math. (2)},
  FJOURNAL = {Annals of Mathematics. Second Series},
    VOLUME = {183},
      YEAR = {2016},
    NUMBER = {2},
     PAGES = {377--498},
      ISSN = {0003-486X,1939-8980},
   MRCLASS = {35Q35 (35B35 35Q31 35Q61 35Q75 76N10 76X05)},
  MRNUMBER = {3450481},
MRREVIEWER = {Nikolay\ G.\ Kuznetsov},
       DOI = {10.4007/annals.2016.183.2.1},
       URL = {https://doi.org/10.4007/annals.2016.183.2.1},
}

@article {guo2014global,
    AUTHOR = {Guo, Yan and Ionescu, Alexandru D. and Pausader, Benoit},
     TITLE = {Global solutions of certain plasma fluid models in
              three-dimension},
   JOURNAL = {J. Math. Phys.},
  FJOURNAL = {Journal of Mathematical Physics},
    VOLUME = {55},
      YEAR = {2014},
    NUMBER = {12},
     PAGES = {123102, 26},
      ISSN = {0022-2488,1089-7658},
   MRCLASS = {76X05 (82D10)},
  MRNUMBER = {3390554},
       DOI = {10.1063/1.4903254},
       URL = {https://doi.org/10.1063/1.4903254},
}

@article {guopausader,
    AUTHOR = {Guo, Yan and Pausader, Benoit},
     TITLE = {Global smooth ion dynamics in the {E}uler-{P}oisson system},
   JOURNAL = {Comm. Math. Phys.},
  FJOURNAL = {Communications in Mathematical Physics},
    VOLUME = {303},
      YEAR = {2011},
    NUMBER = {1},
     PAGES = {89--125},
      ISSN = {0010-3616,1432-0916},
   MRCLASS = {82D10 (35M30)},
  MRNUMBER = {2775116},
MRREVIEWER = {Nader\ Masmoudi},
       DOI = {10.1007/s00220-011-1193-1},
       URL = {https://doi.org/10.1007/s00220-011-1193-1},
}

@article {hadzic2019,
    AUTHOR = {Had\v{z}i\'c, Mahir and Jang, J. Juhi},
     TITLE = {A class of global solutions to the {E}uler-{P}oisson system},
   JOURNAL = {Comm. Math. Phys.},
  FJOURNAL = {Communications in Mathematical Physics},
    VOLUME = {370},
      YEAR = {2019},
    NUMBER = {2},
     PAGES = {475--505},
      ISSN = {0010-3616,1432-0916},
   MRCLASS = {35R35 (35Q35 76N10)},
  MRNUMBER = {3994577},
MRREVIEWER = {J\"urgen\ Socolowsky},
       DOI = {10.1007/s00220-019-03525-1},
       URL = {https://doi.org/10.1007/s00220-019-03525-1},
}

@article {hankwan2018,
    AUTHOR = {Han-Kwan, Daniel and Nguyen, Toan T. and Rousset,
              Fr\'ed\'eric},
     TITLE = {Long time estimates for the {V}lasov-{M}axwell system in the
              non-relativistic limit},
   JOURNAL = {Comm. Math. Phys.},
  FJOURNAL = {Communications in Mathematical Physics},
    VOLUME = {363},
      YEAR = {2018},
    NUMBER = {2},
     PAGES = {389--434},
      ISSN = {0010-3616,1432-0916},
   MRCLASS = {35Q83 (35B40 35Q61 83C05)},
  MRNUMBER = {3851819},
MRREVIEWER = {Shuangqian\ Liu},
       DOI = {10.1007/s00220-018-3208-7},
       URL = {https://doi.org/10.1007/s00220-018-3208-7},
}

@article {article,
    AUTHOR = {Jin, Shi and Markowich, Peter and Sparber, Christof},
     TITLE = {Mathematical and computational methods for semiclassical
              {S}chr\"odinger equations},
   JOURNAL = {Acta Numer.},
  FJOURNAL = {Acta Numerica},
    VOLUME = {20},
      YEAR = {2011},
     PAGES = {121--209},
      ISSN = {0962-4929,1474-0508},
   MRCLASS = {81Q20 (35B25 35Q41 65N06)},
  MRNUMBER = {2805153},
MRREVIEWER = {S.\ Rajasekar},
       DOI = {10.1017/S0962492911000031},
       URL = {https://doi.org/10.1017/S0962492911000031},
}

@article {2003Keppeler,
    AUTHOR = {Keppeler, Stefan},
     TITLE = {Semiclassical quantisation rules for the {D}irac and {P}auli
              equations},
   JOURNAL = {Ann. Physics},
  FJOURNAL = {Annals of Physics},
    VOLUME = {304},
      YEAR = {2003},
    NUMBER = {1},
     PAGES = {40--71},
      ISSN = {0003-4916,1096-035X},
   MRCLASS = {81Q20 (81S99)},
  MRNUMBER = {1965835},
       DOI = {10.1016/S0003-4916(03)00007-1},
       URL = {https://doi.org/10.1016/S0003-4916(03)00007-1},
}

@article{krause2007unified,
  title={A unified approach to the {D}arwin approximation},
  author={Krause, Todd B and Apte, A and Morrison, PJ},
  journal={Phys. plasmas},
  volume={14},
  number={10},
  year={2007},
  publisher={AIP Publishing}
}

@article{shiroto2023improved,
  title={An improved {D}arwin approximation in the classical electromagnetism},
  author={Shiroto, Takashi},
  journal={Phys. of Plasmas},
  volume={30},
  number={4},
  year={2023},
  publisher={AIP Publishing}
}

@article {li2003,
    AUTHOR = {Li, Hailiang and Lin, Chi-Kun},
     TITLE = {Semiclassical limit and well-posedness of nonlinear
              {S}chr\"odinger-{P}oisson systems},
   JOURNAL = {Electron. J. Differential Equations},
  FJOURNAL = {Electronic Journal of Differential Equations},
      YEAR = {2003},
    VOLUME = {93},
     NUMBER = {17},
      ISSN = {1072-6691},
   MRCLASS = {35Q55 (35B30 76X05 82D10 82D55)},
  MRNUMBER = {2000689},
}

@article {LionsPaul,
    AUTHOR = {Lions, Pierre-Louis and Paul, Thierry},
     TITLE = {Sur les mesures de {W}igner},
   JOURNAL = {Rev. Mat. Iberoamericana},
  FJOURNAL = {Revista Matem\'atica Iberoamericana},
    VOLUME = {9},
      YEAR = {1993},
    NUMBER = {3},
     PAGES = {553--618},
      ISSN = {0213-2230},
   MRCLASS = {58G15 (35Q40 47G10 81Q20)},
  MRNUMBER = {1251718},
MRREVIEWER = {Peter\ N.\ Zhevandrov},
       DOI = {10.4171/RMI/143},
       URL = {https://doi.org/10.4171/RMI/143},
}

@article {liu2002,
    AUTHOR = {Liu, Hailiang and Tadmor, Eitan},
     TITLE = {Semiclassical limit of the nonlinear {S}chr\"odinger-{P}oisson
              equation with subcritical initial data},
   JOURNAL = {Methods Appl. Anal.},
  FJOURNAL = {Methods and Applications of Analysis},
    VOLUME = {9},
      YEAR = {2002},
    NUMBER = {4},
     PAGES = {517--531},
      ISSN = {1073-2772,1945-0001},
   MRCLASS = {35Q55 (35B25)},
  MRNUMBER = {2006603},
MRREVIEWER = {Takayoshi\ Ogawa},
       DOI = {10.4310/MAA.2002.v9.n4.a3},
       URL = {https://doi.org/10.4310/MAA.2002.v9.n4.a3},
}

@article {MarkowichMauser,
    AUTHOR = {Markowich, Peter A. and Mauser, Norbert J.},
     TITLE = {The classical limit of a self-consistent quantum-{V}lasov
              equation in {$3$}D},
   JOURNAL = {Math. Models Methods Appl. Sci.},
  FJOURNAL = {Mathematical Models and Methods in Applied Sciences},
    VOLUME = {3},
      YEAR = {1993},
    NUMBER = {1},
     PAGES = {109--124},
      ISSN = {0218-2025,1793-6314},
   MRCLASS = {82C10},
  MRNUMBER = {1203274},
MRREVIEWER = {Reinhard\ Illner},
       DOI = {10.1142/S0218202593000072},
       URL = {https://doi.org/10.1142/S0218202593000072},
}

@article {masaki2011,
    AUTHOR = {Masaki, Satoshi},
     TITLE = {Large time {WKB} approximation for multi-dimensional
              semiclassical {S}chr\"odinger-{P}oisson system},
   JOURNAL = {J. Differential Equations},
  FJOURNAL = {Journal of Differential Equations},
    VOLUME = {251},
      YEAR = {2011},
    NUMBER = {11},
     PAGES = {3028--3062},
      ISSN = {0022-0396,1090-2732},
   MRCLASS = {35M31 (35B25 35J91 35Q41)},
  MRNUMBER = {2832686},
MRREVIEWER = {Alessio\ Pomponio},
       DOI = {10.1016/j.jde.2011.07.030},
       URL = {https://doi.org/10.1016/j.jde.2011.07.030},
}

@article {masmoudi2001selfconsistent,
    AUTHOR = {Masmoudi, Nader and Mauser, Norbert J.},
     TITLE = {The selfconsistent {P}auli equation},
   JOURNAL = {Monatsh. Math.},
  FJOURNAL = {Monatshefte f\"ur Mathematik},
    VOLUME = {132},
      YEAR = {2001},
    NUMBER = {1},
     PAGES = {19--24},
      ISSN = {0026-9255,1436-5081},
   MRCLASS = {78A35 (35Q55 35Q60 78A97)},
  MRNUMBER = {1825716},
       DOI = {10.1007/s006050170055},
       URL = {https://doi.org/10.1007/s006050170055},
}

@article {MaMo23,
    AUTHOR = {Möller, Jakob and Mauser, Norbert J.},
     TITLE = {The semiclassical limit of the {P}auli-{P}oisswell equation by the {W}igner method},
   JOURNAL = {Manuscript},
      YEAR = {2025},
}

@article {moller2023poisson,
    AUTHOR = {M\"oller, Jakob},
     TITLE = {The {P}auli-{P}oisson equation and its semiclassical limit},
   JOURNAL = {Comm. Partial Differential Equations},
  FJOURNAL = {Communications in Partial Differential Equations},
    VOLUME = {50},
      YEAR = {2025},
    NUMBER = {1-2},
     PAGES = {130--161},
      ISSN = {0360-5302,1532-4133},
   MRCLASS = {81S30 (35Q40 81Q05 81Q20)},
  MRNUMBER = {4858221},
       DOI = {10.1080/03605302.2024.2439358},
       URL = {https://doi.org/10.1080/03605302.2024.2439358},
}

@article {moller2023models,
    AUTHOR = {M\"oller, Jakob and Mauser, Norbert J.},
     TITLE = {Nonlinear {PDE} models in semi-relativistic quantum physics},
   JOURNAL = {Comput. Methods Appl. Math.},
  FJOURNAL = {Computational Methods in Applied Mathematics},
    VOLUME = {24},
      YEAR = {2024},
    NUMBER = {2},
     PAGES = {445--457},
      ISSN = {1609-4840,1609-9389},
   MRCLASS = {35Q40 (81Q80)},
  MRNUMBER = {4726534},
       DOI = {10.1515/cmam-2023-0101},
       URL = {https://doi.org/10.1515/cmam-2023-0101},
}

@article {moller2023euler,
    AUTHOR = {M\"oller, Jakob and Mauser, Norbert J.},
     TITLE = {The {E}uler-{P}oisswell/{D}arwin equation and the asymptotic
              hierarchy of the {E}uler-{M}axwell equation},
   JOURNAL = {Asymptot. Anal.},
  FJOURNAL = {Asymptotic Analysis},
    VOLUME = {135},
      YEAR = {2023},
    NUMBER = {3-4},
     PAGES = {525--543},
      ISSN = {0921-7134,1875-8576},
   MRCLASS = {82D10 (35Q60 35Q75 78M35)},
  MRNUMBER = {4667080},
MRREVIEWER = {Jianwei\ Yang},
       DOI = {10.3233/asy-231864},
       URL = {https://doi.org/10.3233/asy-231864},
}

@article {pallard2006,
    AUTHOR = {Pallard, Christophe},
     TITLE = {The initial value problem for the relativistic
              {V}lasov-{D}arwin system},
   JOURNAL = {Int. Math. Res. Not.},
  FJOURNAL = {International Mathematics Research Notices},
      YEAR = {2006},
     PAGES = {Art. ID 57191, 31},
      ISSN = {1073-7928,1687-0247},
   MRCLASS = {35Q75 (76X05 76Y05 82D10)},
  MRNUMBER = {2250010},
MRREVIEWER = {Simone\ Calogero},
       DOI = {10.1155/IMRN/2006/57191},
       URL = {https://doi.org/10.1155/IMRN/2006/57191},
}

@article {peng2007,
    AUTHOR = {Peng, Yuejun and Wang, Shu},
     TITLE = {Convergence of compressible {E}uler-{M}axwell equations to
              compressible {E}uler-{P}oisson equations},
   JOURNAL = {Chinese Ann. Math. Ser. B},
  FJOURNAL = {Chinese Annals of Mathematics. Series B},
    VOLUME = {28},
      YEAR = {2007},
    NUMBER = {5},
     PAGES = {583--602},
      ISSN = {0252-9599,1860-6261},
   MRCLASS = {76W05 (34D15 35Q35)},
  MRNUMBER = {2358943},
MRREVIEWER = {Gerhard\ O.\ Str\"ohmer},
       DOI = {10.1007/s11401-005-0556-3},
       URL = {https://doi.org/10.1007/s11401-005-0556-3},
}

@article {seehafer2008,
    AUTHOR = {Seehafer, Martin},
     TITLE = {Global classical solutions of the {V}lasov-{D}arwin system for
              small initial data},
   JOURNAL = {Commun. Math. Sci.},
  FJOURNAL = {Communications in Mathematical Sciences},
    VOLUME = {6},
      YEAR = {2008},
    NUMBER = {3},
     PAGES = {749--764},
      ISSN = {1539-6746,1945-0796},
   MRCLASS = {35F20 (35A05 76X05 82D10)},
  MRNUMBER = {2455474},
MRREVIEWER = {Jana\ Kopfova},
       DOI = {10.4310/cms.2008.v6.n3.a11},
       URL = {https://doi.org/10.4310/cms.2008.v6.n3.a11},
}

@article {seehafer2009,
    AUTHOR = {Seehafer, Martin},
     TITLE = {A local existence result for a plasma physics model containing
              a fully coupled magnetic field},
   JOURNAL = {Kinet. Relat. Models},
  FJOURNAL = {Kinetic and Related Models},
    VOLUME = {2},
      YEAR = {2009},
    NUMBER = {3},
     PAGES = {503--520},
      ISSN = {1937-5093,1937-5077},
   MRCLASS = {35Q83 (35A01 35F20 76X05 82D10)},
  MRNUMBER = {2525724},
MRREVIEWER = {Simon\ Labrunie},
       DOI = {10.3934/krm.2009.2.503},
       URL = {https://doi.org/10.3934/krm.2009.2.503},
}

@article {sparber2003wigner,
    AUTHOR = {Sparber, Christof and Markowich, Peter A. and Mauser, Norbert
              J.},
     TITLE = {Wigner functions versus {WKB}-methods in multivalued
              geometrical optics},
   JOURNAL = {Asymptot. Anal.},
  FJOURNAL = {Asymptotic Analysis},
    VOLUME = {33},
      YEAR = {2003},
    NUMBER = {2},
     PAGES = {153--187},
      ISSN = {0921-7134,1875-8576},
   MRCLASS = {81S30 (78A05)},
  MRNUMBER = {1977767},
MRREVIEWER = {Apostolos\ Vourdas},
       DOI = {10.3233/asy-2003-540},
       URL = {https://doi.org/10.3233/asy-2003-540},
}

@article{1966Yamasaki,
  title={A new derivation of classical models of the spinning electron from the {W}{K}{B} solutions to the {P}auli and {D}irac equations},
  author={Yamasaki, Hisaiti},
  journal={Prog. Theo. Phys.},
  volume={36},
  number={1},
  pages={72--85},
  year={1966},
  publisher={Oxford University Press}
}

@article {yang2009,
    AUTHOR = {Yang, Jianwei and Wang, Shu},
     TITLE = {Non-relativistic limit of two-fluid {E}uler-{M}axwell
              equations arising from plasma physics},
   JOURNAL = {Z. Angew. Math. Mech.},
  FJOURNAL = {ZAMM. Zeitschrift f\"ur Angewandte Mathematik und Mechanik.
              Journal of Applied Mathematics and Mechanics},
    VOLUME = {89},
      YEAR = {2009},
    NUMBER = {12},
     PAGES = {981--994},
      ISSN = {0044-2267,1521-4001},
   MRCLASS = {35L60 (35B40 35C20 35Q35 76X05)},
  MRNUMBER = {2590892},
MRREVIEWER = {Sergey\ Nikolaevich\ Alekseenko},
       DOI = {10.1002/zamm.200900267},
       URL = {https://doi.org/10.1002/zamm.200900267},
}

@article {2003Wigner,
    AUTHOR = {Zhang, Ping},
     TITLE = {Wigner measure and the semiclassical limit of
              {S}chr\"odinger-{P}oisson equations},
   JOURNAL = {SIAM J. Math. Anal.},
  FJOURNAL = {SIAM Journal on Mathematical Analysis},
    VOLUME = {34},
      YEAR = {2002},
    NUMBER = {3},
     PAGES = {700--718},
      ISSN = {0036-1410,1095-7154},
   MRCLASS = {81S30 (35B25 35Q40 82D37)},
  MRNUMBER = {1970889},
MRREVIEWER = {Vassili\ N.\ Kolokol\cprime tsov},
       DOI = {10.1137/S0036141001393407},
       URL = {https://doi.org/10.1137/S0036141001393407},
}

@book {Zhan08,
    AUTHOR = {Zhang, Ping},
     TITLE = {Wigner measure and semiclassical limits of nonlinear
              {S}chr\"odinger equations},
    SERIES = {Courant Lecture Notes in Mathematics},
    VOLUME = {17},
 PUBLISHER = {Courant Institute of Mathematical Sciences, New York; American
              Mathematical Society, Providence, RI},
      YEAR = {2008},
     PAGES = {viii+197},
      ISBN = {978-0-8218-4701-5},
   MRCLASS = {35Q40 (35C20 35S05)},
  MRNUMBER = {2442291},
MRREVIEWER = {R\'emi\ Carles},
       DOI = {10.1090/cln/017},
       URL = {https://doi.org/10.1090/cln/017},
}

@article {ZhZM02,
    AUTHOR = {Zhang, Ping and Zheng, Yuxi and Mauser, Norbert J.},
     TITLE = {The limit from the {S}chr\"odinger-{P}oisson to the
              {V}lasov-{P}oisson equations with general data in one
              dimension},
   JOURNAL = {Comm. Pure Appl. Math.},
  FJOURNAL = {Communications on Pure and Applied Mathematics},
    VOLUME = {55},
      YEAR = {2002},
    NUMBER = {5},
     PAGES = {582--632},
      ISSN = {0010-3640,1097-0312},
   MRCLASS = {81Q20 (35J10 35Q40 82D10)},
  MRNUMBER = {1880644},
MRREVIEWER = {Fran\c cois\ Castella},
       DOI = {10.1002/cpa.3017},
       URL = {https://doi.org/10.1002/cpa.3017},
}

@book{itzykson2012quantum,
  title={Quantum field theory},
  author={Itzykson, Claude and Zuber, Jean-Bernard},
  year={2012},
  publisher={Courier Corporation}
}

@article{mauser1999rigorous,
  title={Rigorous derivation of the {P}auli equation with time-dependent electromagnetic field},
  author={Mauser, Norbert J.},
  journal={VLSI Design},
  volume={9},
  number={4},
  pages={415--426},
  year={1999},
  publisher={Hindawi}
}

@article{mauser2000semi,
author = { N. J.   Mauser },
title = {Semi-relativistic approximations of the {D}irac equation: {F}irst and second order corrections},
journal = {Trans. Th. Stat. Phys.},
volume = {29},
number = {3-5},
pages = {449-464},
year  = {2000},
publisher = {Taylor & Francis},
doi = {10.1080/00411450008205884},

URL = { 
        https://doi.org/10.1080/00411450008205884
    
},
eprint = { 
        https://doi.org/10.1080/00411450008205884
    
}

}

@phdthesis{moller2023thesis,
    author = {Möller, Jakob},
    title = {(Asymptotic) Analysis of the Pauli-Poisswell equation in
semi-relativistic Quantum Physics},
    school = {University of Vienna},
    year = {2023}
}

@article{pauli1927quantenmechanik,
  title={Zur {Q}uantenmechanik des magnetischen {E}lektrons},
  author={Pauli Jr, Wolfgang},
  journal={Zeitschrift f{\"u}r {P}hysik},
  volume={43},
  number={9},
  pages={601--623},
  year={1927},
  publisher={Springer}
}

\end{document}